\newtheorem{theorem}{Theorem}[section]
\newtheorem{lemma}[theorem]{Lemma}
\newtheorem{proposition}[theorem]{Proposition}
\newtheorem{corollary}[theorem]{Corollary}
\theoremstyle{definition}
\newtheorem{definition}[theorem]{Definition}
\newtheorem{example}[theorem]{Example}
\theoremstyle{remark}
\newtheorem{remark}[theorem]{Remark}
\numberwithin{equation}{section}
\begin{document}

\title[Geometric flows on warped product manifold]
{Geometric flows on warped product manifold }

\author{Wei-Jun Lu}
\address{Center of Mathematical Sciences, Zhejiang University,
 Hangzhou, 310027, China; School of Science,
 Guangxi University for Nationalities, Naning, 530007, China }
 \email{weijunlu2008@126.com}



\subjclass[2000]{53C21, 53C25, 53C44, 58J45 }

\date{}


\keywords{warped product metric; Ricci flow; hyperbolic geometric
flow; warped product solution; evolution equation}

\begin{abstract}
  {\footnotesize  We derive one unified formula for Ricci
curvature tensor on arbitrary warped product manifold by introducing
a new notation for the lift vector and the Levi-Civita connection.
This formula is helpful to further consider Ricci flow (RF) and
hyperbolic geometric flow (HGF) and evolution equations on warped
product manifold. We characterize the behavior of warping function
under RF and under HGF. Simultaneously, we give some simple examples
to illustrate the existence of such warping function solution. In
addition, we also gain the evolution equations for metrics and Ricci
curvature on a general warped product manifold and specific warped
product manifold whose second factor manifold is of Einstein metric.
}
\end{abstract}

\maketitle

\section{Introduction}

   From Riemann's work it appears that he worked with changing metrics mostly
by multiplying them by a function (conformal change). Soon after
Riemmann's discoveries it was realized that in polar coordinates one
can change the metric in a different way, now referred to as a
warped product metric (WPM). The concept of warped product metrics
was first introduced by Bishop and O'Neill \cite{BO'N} to construct
examples of Riemannian manifolds with negative curvature. In
Riemannian geometry, warped product manifolds and their generic
forms have been used to construct new examples with interesting
curvature properties like Einstein spaces \cite{Be, KK} or (locally)
symmetric spaces \cite{BG}. In string theory, Yau in
\cite[P244-245]{YN} argued that ``the easiest way to partition the
ten-dimensional space is to cut it cleanly, splitting it into
four-dimensional spacetime and six-dimensional hidden subspace, and
in the non-k\"{a}hler case, the ten-dimensional spacetime is not a
Cartesian product but rather a warped product."

In this paper, we shall consider the warped product metrics
combining with two types of geometric flows, i.e. Ricci flow (RF)
and hyperbolic geometric flow (HGF).

 As we have known, Ricci flow was
introduced and studied by Hamilton \cite{Ha}. This was the first
means to study the geometric quantities associated to a metric $g(x,
t),\,(x, t) \in  M \times \mathbb{R}$ as the metric evolves via a
PDE, where $M$ is a differentiable manifold. The Ricci flow is a
powerful tool to understand the geometry and topology of some
Riemann manifolds. Any solution of Ricci flow equation will help us
to understand its behavior for general cases and the singularity
formation, further the basic topological and geometrical properties
as well as analytic properties of the underlying manifolds. On the
other hand, a hyperbolic Ricci evolution is the Ricci wave, i.e.
hyperbolic geometric flow (HGF) introduced by Kong and Liu
\cite{KL}. In fact, both RF and HGF can be viewed as prolongations
of the Einstein equation, whose left-hand side consists of what's
called the modified Ricci tensor. Since the right-hand side of the
RF and HGF equation also includes a key term in the famous Einstein
equation---the Ricci curvature tensor which shows how matter and
energy affect the geometry of spacetime, HGF, RF and Einstein
equation can be unified into a single PDEs system as
\begin{equation} \label{10-11}
 \alpha(x,t)\frac{\partial^2}{\partial t^2}g(t)
  +\beta(x,t)\frac{\partial}{\partial t}g(t)+\gamma(x,t)g(t)+2Ric_{g(t)}=0,
  \end{equation}
where $\alpha(x,t),\,\beta(x,t),\,\gamma(x,t)$ are certain smooth
functions (\cite{KL}, \cite{HU}). It is easy to see from
(\ref{10-11}) that the above three cases correspond to
$``\alpha(x,t)=1,\,\beta(x,t)=\gamma(x,t)=0"$,
$``\alpha(x,t)=0,\,\beta(x,t)=1,\,\gamma(x,t)=0"$ and
$``\alpha(x,t)=0,\,\beta(x,t)=0,\,\gamma(x,t)=const"$, respectively.

In the topic of combining geometric flow with warped product
manifolds, there have been made some progress recently. For
instance, Ma and Xu in \cite{MX} showed that the negative curvature
is preserved in the deformation of hyperbolic warped product metrics
under RF by such example: $\overline M = \mathbb R _+ \times N^n$
with the product metric
 $ g(t) = \varphi(x, t)^2 dx^2 + \psi(x, t)^ 2 \hat g $, where
 $ (N^n, \hat g)$ is an Einstein manifold of dimension $ n \geq 2 $,
 $\varphi(x)$ and $\psi(x)$ are two smooth positive functions of the variable
 $ x > 0$. Xu and Ma's work is mainly inspired from the work of Simon
 \cite{Si}.  Das, Prabhu and Kar in their work \cite{DPK} mainly
 considered the evolution under RF of the
warped product $\mathbb R^1\times M $ with line element of the form
 $$ds^2 = e^{2f}(\sigma, \lambda) (-dt^2 + dx^2 + dy^2 + dz^2 )
         + r^2_ c (\sigma, \lambda)d\sigma^2 $$
and the behavior of $f$ by solving the flow equations, where $M$ is
Minkowski spacetime and $\mathbb R^1$ is the real line, $\lambda$ is
flow parameter. Especially, Simon \cite{Si} characterized the local
existence of Ricci flow on the complete non-compact manifold
$X=(\mathbb{R},h)\times (N^n, \gamma)$ with warped product metric
$g(x,q)=h(x)\oplus r^2(x)\gamma(q)$ and showed that {\em if
$g_0(x,q)=h_0(x)\oplus r^2_0(x)\gamma(q)$ is arbitrary warped
product metric which satisfies some certain conditions
  \begin{equation*}\left\{\begin{array}{ll}
 & \sup\limits_{x\in \mathbb{R}} (h_0)_{xx} <\infty,
     \quad \inf\limits_{x\in \mathbb{R}} (h_0)_{xx}>0,\quad \inf\limits_{x\in \mathbb{R}} r_0(x) >0,\\
 & \sup\limits_{x\in \mathbb{R}}\big( |(\frac{\partial}{\partial x})^j h_0(x)|
          +|(\frac{\partial}{\partial x})^j \log r_0(x)| \big) <\infty, \forall j\in \{1,2,\ldots\},
   \end{array} \right.
   \end{equation*}
 then there exists a unique warped product solution $g(x,q,t)=h(x,t)\oplus
 r^2(x,t)\gamma(q)$, $t\in [0,T)$ to the Ricci flow
      $$
      \frac{\partial }{\partial t}g(t)=-Ric_{g(t)},\quad g(0)=g_0,\quad t\in [0,T).
      $$}
For more detail we refer to see Theorem 3.1 in \cite{Si}.

 Motivated by \cite{MX}, \cite{Si} and
\cite{DPK}, we are interested in the behavior of geometric flows
associated to the general WPM
$\overline{M}=(M_1,g_1)\times_{\lambda} (M_2,g_2)$. At the same
time, we have paid attention to a known fact that ``{\em if
$(M_1,g_1(t))$ and $(M_2, g_2(t)$ are solutions of the Ricci flow on
a common time, then their direct product $(M_1\times M_2,\ g_1(t)+
g_2(t))$ is a solution to the Ricci flow}" (see Exercise 2.5 in
\cite{CLN}, P99). Naturally, we wish to generalize this result to
warped product manifold and even to hyperbolic geometric flow as
well. Realizing that the Ricci curvature tensor formula on WPM is of
vital role in studying the Ricci flow and hyperbolic geometric flow,
we first integrate the separated Ricci curvature formula in previous
academic literature. Since the formulas about Riemann curvature and
Ricci curvature are divided up into several parts according to the
horizontal lift or vertical lift of the tangent vectors attached to
$M_1$ or $M_2$ (see Propositions 2.7 and 2.9). To better study the
RF and HGF associated to WPM,  regardless of the tangent vectors are
attached to horizontal lift or vertical lift, we have to derive out
one formula as a whole. By introducing a new notation for lift
vector (see Proposition \ref{9-23-1}, Remark \ref{9-23-2}) and
Levi-Civita connection $\bar \nabla$ over $\overline M$, we derive a
unified formula (\ref{6-30-6}) for Ricci curvature and scalar
curvature (see Theorem \ref{6-30-4}). Using this unified Ricci
curvature formula, we consider the behavior of warping function
under the RF and under HGF on warped product manifold $\overline M $
(unnecessarily compact)and give two main results (Theorem
\ref{5-15-0}, Theorem \ref{5-24-2}), which assert that the warping
function $\lambda$ should satisfy a characteristic equation when the
warped product metric $\bar g(x,y,t)=g_1(x,t) \oplus \lambda^2(x,t)
g_2(y,t)$ is also a solution to the RF (resp. HGF), where $g_1(x,t)$
and $g_2(y,t)$ are respectively solutions to the RF (resp. HGF) .

Considering one may worry about these equations have no any solution
$\lambda$, we make some appropriate illustration. We employ the
following two strategies for overcoming this obstruction: one is by
appealing to the known short-time existence theorem of geometric
flows (in compact case, refer to \cite{Ha}, \cite{De}, \cite{DKL};
in complete no-compact case, refer to \cite{Sh} ); another is to get
some sense by constructing some specific examples (see Example
\ref{9-30-2} and example \ref{7-2-07}), whose ideas mainly come from
\cite{Pe,Si,AK,MX}.

 In addition, in order to understand how the curvature on warped product manifold
is evolving and behaving , using the unified Ricci curvature formula
(\ref{6-30-6}) we also consider the evolution equations along the RF
and HGF. On general WPM, we derive two class results: (1) the
evolution equations for metric and warping function, see Proposition
\ref{10-5-01} and Proposition \ref{10-5-05};
 and (2) the Ricci curvature evolution equations (\ref{10-5-08}),(\ref{10-5-09}) in
Theorem \ref{10-5-07}. On a specific warped product manifold whose
warped product metric is of the form $\bar g(x,y,t)=g_1(x,t) \oplus
\lambda^2(x,t)g_2(y,t)\big)$ with a fixed Einstein
 metric $g_2$, we gain the more interesting evolution equations
 for Ricci curvature and special function $f(x,t)$, see (\ref{10-5-09'}) and
 (\ref{10-6-14}) in Theorem \ref{10-5-009} and Theorem \ref{10-6-13'}.

 The organization of this paper is below. In Section 2, we review some preliminary results and
 derive out three unified formulas for Riemannian curvature, Ricci
 curvature and scalar curvature on warped product manifold. Section 3 is devoted to
characterize the behavior of warping function under the Ricci flow.
We give a necessary and sufficient condition for the warping
function $\lambda$, including the elaboration on the short-time
existence of warped product solution to the RF. Furthermore, we also
present some examples. Section 4 is parallel to Section 3. A
distinction between them is in that the considered flow is HGF
rather than RF. In last Section, we discuss evolution equations of
warping function and Ricci curvature on general and specific WPM
under the RF and under the HGF.

\vskip 3mm{\bf Acknowledgments.} The author thanks Professor Kefeng
Liu for constructive suggestion. He would like to thank Doctor
Jian-Ming Wan, Doctor Hai-Ping Fu, Professor De-Xing Kong, Doctor
Chang-Yong Yin and Professor Fangyang Zheng for some useful
discussion.

\section{Unified Ricci tensor formula on WPM}

Before studying the RF and HGF of warped product metrics, we need to
deduce the crucial formula for Ricci tensor from the separated form
to united form on warped product manifolds. We do this by the
construct of the connection. As we will see, this unified Ricci
tensor formula simplifies the study of curvature tensors associated
warped product metrics, and also allows us to find explicit formulas
for RF and HGF with respect to a given underlying warped product
manifolds.

We first introduce background knowledge on warped product manifolds,
see \cite{BO'N, O'N} for detail.

\subsection{Basics of warped products}

 Let $M_1$ and $M_2$ be Riemannian manifolds equipped with Riemannian
metrics $g_1$ and $g_2$, respectively, and let $\lambda$ be a
strictly positive real function on $M_1$. Consider the product
manifold $M_1 \times M_2$ with its natural projections $\pi_1 : M_1
\times M_2 \to M_1$ and $\pi_2 : M_1 \times M_2 \to M_2$.

The warped product manifold $\overline{M} = M_1 \times_{\lambda}
M_2$ is the manifold $M_1 \times M_2$ equipped with the Riemannian
metric $\bar g=g_1 \oplus \lambda^2 g_2$ defined by
  $$\bar{g} (X,Y) = g_1(d\pi_1(X), d\pi_1(Y )) + \lambda^2 g_2 (d\pi_2(X), d\pi_2(Y ))$$
 for any tangent vectors $X, Y \in T(x,y)(M_1\times M_2)$. The function $\lambda$ is called
 the warping function of the warped product. When $\lambda=1$,  $M_1 \times_{\lambda} M_2$ is a direct
product.

For a warped product manifold $M_1 \times_{\lambda} M_2$, $M_1$ is
called the base and $M_2$ the fiber. The fibers $p\times
M_2=\pi_1^{-1}(p)$ and the leaves $M_1\times
 q=\pi_2^{-1}(q)$ are Riemannian submanifolds of $\overline{M}$. Vectors
 tangent to leaves are called horizontal and those tangent to fibers
 are called vertical. We denote by $\mathscr{H}$ the orthogonal
 projection of $T_{(p,q)}\overline{M}$ onto its horizontal subspace $T_{(p,q)} M_1\times
 q$, and by $\mathscr{V}$ the projection onto the vertical subspace $T_{(p,q)} p
 \times M_2$.

If $v\in T_pM_1$, $p\in M_1$ and $q\in M_2$, then the lift $\tilde
v$ of $v$ to $(p, q)$ is the unique vector in $T_{(p,q)}
M_1=T_{(p,q)}M_1\times q \subset T_{(p,q)}\overline{M}$ such that
$d\pi_1(\tilde v) =v$. For a vector field $X\in \mathscr{X}(M_1)$,
the lift of $X$ to $\overline{M}$ is the vector field $\tilde{X}$
whose value at each $(p,q)$ is the lift of $X_p$ to $(p,q)$. The set
of all such horizontal lifts is denoted by $\mathscr{L}(M_1)$.
Similarly, we denote by $\mathscr{L}(M_2)$ the set of all vertical
lifts.

We state some known results below.

\begin{proposition} \label{5-22-2}
(1) If $\tilde X, \tilde Y \in \mathscr L(M_1)$ then
$$[\tilde X,\tilde Y]=[X,Y]^{\sim} \in \mathscr L(M_1);$$
(2) If $\tilde U,
\tilde V \in \mathscr{L}(M_2)$ then
$$[\tilde U,\tilde V]=[U,V]^{\sim} \in \mathscr{L}(M_2);$$
(3) If $\tilde X \in \mathscr{L}(M_1)$ and $ \tilde V \in
\mathscr{L}(M_2)$ then $[\tilde X, \tilde V]=0$.

\end{proposition}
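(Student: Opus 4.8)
The plan is to exploit $\pi$-relatedness of vector fields together with the standard naturality of the Lie bracket: if a vector field $Z_1$ on a manifold $P$ is $\phi$-related to $Z_1'$ on $Q$ and $Z_2$ is $\phi$-related to $Z_2'$, then $[Z_1,Z_2]$ is $\phi$-related to $[Z_1',Z_2']$. First I would record the two elementary relatedness properties of lifts. By the definition of $\tilde X$ for $X\in\mathscr X(M_1)$ one has $d\pi_1(\tilde X)=X\circ\pi_1$, that is, $\tilde X$ is $\pi_1$-related to $X$; and since $\tilde X$ is horizontal, $d\pi_2(\tilde X)=0$, so $\tilde X$ is $\pi_2$-related to the zero field on $M_2$. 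Symmetrically, for $V\in\mathscr X(M_2)$ the lift $\tilde V$ is $\pi_2$-related to $V$ and $\pi_1$-related to $0$.

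For part (1): since $\tilde X,\tilde Y$ are $\pi_1$-related to $X,Y$, naturality gives that $[\tilde X,\tilde Y]$ is $\pi_1$-related to $[X,Y]$; since both are $\pi_2$-related to $0$, the bracket $[\tilde X,\tilde Y]$ is $\pi_2$-related to $[0,0]=0$, hence horizontal. A horizontal vector field on $\overline M$ that is $\pi_1$-related to $[X,Y]$ is precisely $[X,Y]^{\sim}$: at each $(p,q)$ it lies in $T_{(p,q)}M_1\times q$ and is carried by $d\pi_1$ to $[X,Y]_p$, and such a vector is unique. Hence $[\tilde X,\tilde Y]=[X,Y]^{\sim}\in\mathscr L(M_1)$. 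Part (2) is identical after interchanging the roles of $M_1$ and $M_2$, and of horizontal and vertical. For part (3): $\tilde X$ is $\pi_1$-related to $X$ while $\tilde V$ is $\pi_1$-related to $0$, so $[\tilde X,\tilde V]$ is $\pi_1$-related to $[X,0]=0$; likewise it is $\pi_2$-related to $[0,V]=0$. Thus at every $(p,q)$ the vector $[\tilde X,\tilde V]_{(p,q)}$ is annihilated by both $d\pi_1$ and $d\pi_2$, and since $(d\pi_1,d\pi_2)\colon T_{(p,q)}\overline M\to T_pM_1\oplus T_qM_2$ is an isomorphism, $[\tilde X,\tilde V]=0$.

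The only spots that deserve a little care — and where I would slow down — are the implications "$\pi_i$-related to $0$ $\Rightarrow$ horizontal (resp. vertical)" and the uniqueness clause that identifies the horizontal lift $[X,Y]^{\sim}$; both rest on the elementary fact that the product structure gives $T_{(p,q)}(M_1\times M_2)\cong T_pM_1\oplus T_qM_2$ via the two differentials. Alternatively, the whole statement can be verified in product coordinates $(x^i,y^\alpha)$: a lift from $M_1$ has the form $\tilde X=\sum_i X^i(x)\,\partial_{x^i}$ with coefficients independent of $y$, a lift from $M_2$ has the form $\tilde V=\sum_\alpha V^\alpha(y)\,\partial_{y^\alpha}$ with coefficients independent of $x$, and the three identities follow by direct computation of the coordinate brackets. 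I would present the $\pi$-relatedness argument as the main proof and relegate the coordinate check to a remark, since it is conceptually the same observation — that lifted fields have "pure" and factor-independent components — phrased in charts.
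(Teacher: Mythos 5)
Your argument is correct. The paper itself offers no proof of this proposition --- it is stated under the heading ``We state some known results below'' and is essentially Lemma~34 of O'Neill's \emph{Semi-Riemannian Geometry} --- so there is nothing in the text to compare against. Your route via $\pi_i$-relatedness and the naturality of the Lie bracket is the standard one: the two facts you flag as needing care, namely that ``$\pi_2$-related to $0$'' means horizontal and that a horizontal field $\pi_1$-related to $[X,Y]$ must be $[X,Y]^{\sim}$, both follow immediately from the isomorphism $(d\pi_1,d\pi_2)\colon T_{(p,q)}(M_1\times M_2)\to T_pM_1\oplus T_qM_2$, exactly as you say, and the coordinate verification you relegate to a remark is the alternative proof one finds in some texts. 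No gaps.
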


\begin{proposition}(\cite{O'N}, Prop.35, P206) \label{5-23-1}
On $\overline{M}$, if $X,Y \in \mathscr{L}(M_1)$ and
$V,W\in \mathscr{L}(M_2)$, then \\
(1) $\bar \nabla_X Y \in \mathscr{L}(M_1)$ is the lift of
$\;^{M_1}\!\nabla_X Y $ on $M_1$;\\
 (2) $\bar \nabla_X V=\bar \nabla_V X= \frac{X\lambda}{\lambda} V$.\\
(3) $ \mathrm{nor}\bar\nabla _V W=II(V,W)=-\frac{<V,W>}{\lambda}
\mathrm{grad} \lambda$, where
 $$nor: \mathscr{H}\to T_{(p,q)}(M_1\times q)=\big(T_{(p,q)} p\times M_2\big)^\perp. $$\\
\noindent(4) $\mathrm{tan} \bar \nabla_V W\in \mathscr{L}(M_2)$ is
the lift of $\;^{M_2}\!\nabla_V W$ on $M_2$, where
  $$tan: \mathscr{V} \to T_{(p,q)} (p\times M_2). $$
\end{proposition}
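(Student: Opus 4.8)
The plan is to obtain all four assertions from the Koszul formula for the Levi-Civita connection $\bar\nabla$ of $\bar g$,
\begin{multline*}
2\bar g(\bar\nabla_A B, C)=A\bar g(B,C)+B\bar g(A,C)-C\bar g(A,B)\\
+\bar g([A,B],C)-\bar g([A,C],B)-\bar g([B,C],A),
\end{multline*}
applied to lifts $A,B,C$ taken from $\mathscr L(M_1)\cup\mathscr L(M_2)$, which span $T_{(p,q)}\overline M$ at every point. The ingredients are Proposition \ref{5-22-2} for the Lie brackets of lifts, together with two elementary observations: (i) $\bar g$ restricted to horizontal lifts is the pullback of $g_1$, while $\bar g(\tilde V,\tilde W)=\lambda^2(g_2(V,W))^{\sim}$ on vertical lifts and $\bar g(\tilde X,\tilde V)=0$; and (ii) a function pulled back from $M_1$ is annihilated by every vertical lift, whereas a function pulled back from $M_2$ is annihilated by every horizontal lift. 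In particular $\lambda$ is a base function, so $\tilde V\lambda=0$, and its $\bar g$-gradient $\mathrm{grad}\,\lambda$ is the horizontal lift of the $g_1$-gradient of $\lambda$, so that $\bar g(\mathrm{grad}\,\lambda,\tilde X)=\tilde X\lambda$.

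For part (1), I would take $X,Y,Z\in\mathscr L(M_1)$. By Proposition \ref{5-22-2}(1), $[X,Y]$ is again a horizontal lift, and all the metric terms are pullbacks from $M_1$; hence the Koszul formula for $\bar g$ reduces termwise to the Koszul formula for $g_1$ and yields $\bar g(\bar\nabla_X Y,Z)=\big(g_1(\;^{M_1}\!\nabla_X Y,Z)\big)^{\sim}$. Running Koszul again with $C$ replaced by a vertical lift $\tilde V$ makes every term vanish: the metric terms because $\bar g(X,Y)$ is a base function killed by $\tilde V$ and $\bar g(X,\tilde V)=\bar g(Y,\tilde V)=0$, and the bracket terms by Proposition \ref{5-22-2}(1),(3). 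So $\bar\nabla_X Y$ has no vertical component, and therefore $\bar\nabla_X Y=\big(\;^{M_1}\!\nabla_X Y\big)^{\sim}\in\mathscr L(M_1)$.

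For part (2), Proposition \ref{5-22-2}(3) gives $[X,V]=0$, so $\bar\nabla_X V=\bar\nabla_V X$ since $\bar\nabla$ is torsion-free. Pairing $\bar\nabla_X V$ with a horizontal $Y$ in Koszul gives $0$ (each surviving metric term is either $\bar g(X,V)=\bar g(V,Y)=0$ or the base function $\bar g(X,Y)$ differentiated along $V$, and all brackets vanish), so $\bar\nabla_X V$ is vertical. Pairing with a vertical $W$, only $X\bar g(V,W)$ survives, and $X\bar g(V,W)=X(\lambda^2)(g_2(V,W))^{\sim}=2\lambda(X\lambda)(g_2(V,W))^{\sim}=\frac{2X\lambda}{\lambda}\bar g(V,W)$; thus $\bar g(\bar\nabla_X V,W)=\bar g\big(\frac{X\lambda}{\lambda}V,W\big)$ for all vertical $W$, and verticality of $\bar\nabla_X V$ forces $\bar\nabla_X V=\frac{X\lambda}{\lambda}V$. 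For parts (3) and (4) I would take $V,W\in\mathscr L(M_2)$. Pairing $\bar\nabla_V W$ with a horizontal $X$, only $-X\bar g(V,W)=-2\lambda(X\lambda)(g_2(V,W))^{\sim}=-\frac{2\langle V,W\rangle}{\lambda}\bar g(\mathrm{grad}\,\lambda,X)$ survives, so the normal (horizontal) part of $\bar\nabla_V W$ equals $-\frac{\langle V,W\rangle}{\lambda}\mathrm{grad}\,\lambda$; since each fibre $p\times M_2$ is a Riemannian submanifold, this is precisely $II(V,W)$. Pairing with a vertical $Z$, every term survives, but because $\lambda$ is constant along fibres and brackets of vertical lifts are vertical lifts (Proposition \ref{5-22-2}(2)), the whole expression is $\lambda^2$ times the $g_2$-Koszul formula for $\;^{M_2}\!\nabla_V W$; hence $\mathrm{tan}\,\bar\nabla_V W=\big(\;^{M_2}\!\nabla_V W\big)^{\sim}$.

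The only delicate aspect is the bookkeeping: one must keep careful track of which scalars descend to $M_1$ (hence are killed by vertical lifts) and which descend to $M_2$ (hence are killed by horizontal lifts), and remember the $\lambda^2$-weighting of $\bar g$ in the fibre directions. Once the bracket relations of Proposition \ref{5-22-2} are in hand, each of (1)--(4) is a short, routine application of the Koszul formula, and no serious obstacle remains.
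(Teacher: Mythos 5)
Your Koszul-formula argument is correct and complete; the paper itself states this proposition without proof (it is quoted from O'Neill, Prop.~35, p.~206), and your derivation is essentially the standard proof given in that cited source, using the bracket relations of Proposition~\ref{5-22-2} in exactly the intended way. All four parts check out, including the verticality/horizontality eliminations, the sign in the second fundamental form, and the $\lambda^2$-weighting of $\bar g$ on the fibre directions.
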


Let $\;^{M_1}\!R$ and $\;^{M_2}\!R$ be the lifts on $\overline{M}$
of the Riemannian curvature tensors of $M_1$ and $M_2$,
respectively. Since the projection $\pi_1$ is an isometry on each
leaf, $\;^{M_1}\!R$ gives the Riemannian curvature of each leaf. The
corresponding assertion holds for $\;^{M_2}\!R$, since the
projection $\pi_2$ is a homothety. Because leaves are totally
geodesic, $\;^{M_1}\!R$ agrees with the curvature tensor $\bar R$ of
$\overline{M}$ on horizontal vectors. This time the corresponding
assertion fails for $\;^{M_2}\!R$ and $\bar R$, since fibers are in
general only umbilic. In addition, for convenience the alternative
notation $\bar R(X,Y)Z$ is $\bar R_{XY}Z$.

\begin{proposition}(\cite{O'N}, Prop.42, P210) \label{5-23-2}
Let $\overline{M}$ be a warped product manifold, if $X,Y, Z \in
\mathscr{L}(M_1)$ and
$U, V, W\in \mathscr{L}(M_2)$, then \\
(1) $\bar R_{X Y}Z \in \mathscr{L}(M_1)$ is the lift of
$\;^{M_1}\!R_{XY}Z $ on $M_1$;\\
 (2) $\bar R_{VX}Y=\big(\mathrm{Hess}(\lambda)(X, Y)/\lambda \big) V$.\\
(3) $ \bar R_{XY}V=\bar R_{VW}X=0.$\\
(4) $ \bar R_{XV}W =\frac{\bar g(V,W)}{\lambda} \bar\nabla_X
\mathrm{grad}\lambda.$ \\
(5) $\bar R_{VW}U =\;^{M_2}\!R_{V W}U- \frac{1}{\lambda^2}\bar
g(\mathrm{grad}\lambda, \mathrm{grad}\lambda)\big(\bar g(V,U) W-\bar
g(W,U)V\big)$.
\end{proposition}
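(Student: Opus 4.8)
The plan is to verify the five identities one at a time by substituting the connection formulas of Proposition~\ref{5-23-1} into the curvature operator $\bar R_{AB}=\bar\nabla_{[A,B]}-[\bar\nabla_A,\bar\nabla_B]$ (the sign convention of \cite{O'N}), exploiting that $\bar R$ is a tensor, so it suffices to evaluate it on lifts of vector fields on $M_1$ and on $M_2$. Two preliminary remarks keep the bookkeeping light. First, since $\lambda$ is the pullback of a function on $M_1$, its $\bar g$-gradient $\mathrm{grad}\,\lambda$ is the horizontal lift of $\;^{M_1}\!\mathrm{grad}\,\lambda$ (because $\bar g$ restricts to $g_1$ on horizontal vectors and is orthogonal to the vertical space), every vertical field annihilates $\lambda$ and any scalar built from $\lambda$ by $\;^{M_1}\!\nabla$, and, taking $X=\mathrm{grad}\,\lambda$ in Proposition~\ref{5-23-1}(2),
\begin{equation*}
 \bar\nabla_V\,\mathrm{grad}\,\lambda=\frac{(\mathrm{grad}\,\lambda)\lambda}{\lambda}\,V=\frac{\bar g(\mathrm{grad}\,\lambda,\mathrm{grad}\,\lambda)}{\lambda}\,V .
\end{equation*}
Second, by Proposition~\ref{5-22-2} we may take the fields to be lifts of coordinate fields on the two factors, so that $[\tilde X,\tilde Y]$, $[\tilde U,\tilde V]$ and $[\tilde X,\tilde V]$ all vanish at the chosen point and $\bar R_{AB}C=\bar\nabla_B\bar\nabla_A C-\bar\nabla_A\bar\nabla_B C$ there (alternatively, keeping the bracket terms, they recombine into the corresponding $M_1$- or $M_2$-brackets).

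Identity (1) is then immediate: by Proposition~\ref{5-23-1}(1) the restriction of $\bar\nabla$ to $\mathscr L(M_1)$ is the lift of $\;^{M_1}\!\nabla$, and $[\tilde X,\tilde Y]=[X,Y]^{\sim}\in\mathscr L(M_1)$, so every term of $\bar R_{XY}Z$ is the lift of the like term of $\;^{M_1}\!R_{XY}Z$. For (2), I would use $[V,X]=0$, that $\bar\nabla_X Y\in\mathscr L(M_1)$, and Proposition~\ref{5-23-1}(2) twice: $\bar\nabla_V Y=\tfrac{Y\lambda}{\lambda}V$, $\bar\nabla_X V=\tfrac{X\lambda}{\lambda}V$, and $\bar\nabla_V(\;^{M_1}\!\nabla_X Y)=\tfrac{(\;^{M_1}\!\nabla_X Y)\lambda}{\lambda}V$; after expanding $X(Y\lambda/\lambda)$ the scalar coefficient collapses to $\tfrac1\lambda\big(X(Y\lambda)-(\;^{M_1}\!\nabla_X Y)\lambda\big)=\mathrm{Hess}(\lambda)(X,Y)/\lambda$. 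For the first half of (3) the same computation gives the coefficient of $V$ in $\bar R_{XY}V$ as $X(Y\lambda/\lambda)-Y(X\lambda/\lambda)-[X,Y]\lambda/\lambda$, which is identically $0$; and $\bar R_{VW}X=0$ follows by a direct computation in the same spirit (one obtains $\tfrac{X\lambda}{\lambda}\,[V,W]^{\sim}$ from both $\bar\nabla_{[V,W]}X$ and the repeated covariant derivatives, with opposite signs), or from the first Bianchi identity together with (4).

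Identity (4): since $[X,V]=0$, $\bar R_{XV}W=\bar\nabla_V\bar\nabla_X W-\bar\nabla_X\bar\nabla_V W$. I would decompose $\bar\nabla_V W=-\tfrac{\bar g(V,W)}{\lambda}\mathrm{grad}\,\lambda+(\;^{M_2}\!\nabla_V W)^{\sim}$ by Proposition~\ref{5-23-1}(3)--(4), differentiate it by $X$ using Proposition~\ref{5-23-1}(1)--(2) together with the fact that $\lambda$ and $g_2(V,W)$ are functions on $M_1$, and compute $\bar\nabla_V\bar\nabla_X W$ from $\bar\nabla_X W=\tfrac{X\lambda}{\lambda}W$ with $V(\tfrac{X\lambda}{\lambda})=0$; everything cancels except the term $\tfrac{\bar g(V,W)}{\lambda}\,\bar\nabla_X\mathrm{grad}\,\lambda$, which is the asserted value.

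Identity (5) is the Gauss equation for the umbilic fibers, and this is where the real work lies. I would expand $\bar R_{VW}U=\bar\nabla_W\bar\nabla_V U-\bar\nabla_V\bar\nabla_W U$ (brackets chosen to vanish), decomposing each $\bar\nabla_A B$ with $A,B$ vertical into its horizontal (normal) and vertical (tangential) parts via Proposition~\ref{5-23-1}(3)--(4) at all four slots. The purely tangential contributions reassemble into $(\;^{M_2}\!R_{VW}U)^{\sim}$. The remaining terms fall into two groups: horizontal multiples of $\mathrm{grad}\,\lambda$, which cancel after using $V\big(\tfrac{\bar g(W,U)}{\lambda}\big)=\lambda g_2(\;^{M_2}\!\nabla_V W,U)+\lambda g_2(W,\;^{M_2}\!\nabla_V U)$ (because $V\lambda=0$) together with the cross terms $-\tfrac{\bar g(V,\;^{M_2}\!\nabla_W U)}{\lambda}\mathrm{grad}\,\lambda$; and vertical multiples of $V$ and $W$ arising from $\bar\nabla_{(\cdot)}\mathrm{grad}\,\lambda$, which, via the displayed formula for $\bar\nabla_V\mathrm{grad}\,\lambda$, telescope to $-\tfrac{1}{\lambda^2}\bar g(\mathrm{grad}\,\lambda,\mathrm{grad}\,\lambda)\big(\bar g(V,U)W-\bar g(W,U)V\big)$. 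The main obstacle is precisely this last piece of bookkeeping — keeping track of exactly which $\mathrm{grad}\,\lambda$-terms cancel and which $V,W$-terms survive — and it is made manageable by the observations that all $M_2$-derivatives of $\lambda$ vanish and that $\bar g(\mathrm{grad}\,\lambda,\mathrm{grad}\,\lambda)=(\mathrm{grad}\,\lambda)\lambda$.
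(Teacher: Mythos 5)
Your proposal is correct: the paper offers no proof of this proposition, quoting it verbatim from O'Neill (Prop.\ 42, p.\ 210), and your direct verification from the connection formulas of Proposition \ref{5-23-1} --- including the auxiliary identity $\bar\nabla_V\,\mathrm{grad}\,\lambda=\lambda^{-1}\bar g(\mathrm{grad}\,\lambda,\mathrm{grad}\,\lambda)\,V$, the collapse of the scalar coefficient to $\mathrm{Hess}(\lambda)(X,Y)/\lambda$ in (2), and the cancellation of the $\mathrm{grad}\,\lambda$-terms in the Gauss-type computation for (5) --- is precisely the standard argument given there. All five identities check out with O'Neill's sign convention $\bar R_{XY}=\bar\nabla_{[X,Y]}-[\bar\nabla_X,\bar\nabla_Y]$, so there is nothing to add.
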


Writing $\;^{M_1}\!\mathrm{Ric}$ for the lift (pullback by $\pi_1:
\overline{M} \to M_1$) of the Ricci curvature of $M_1$, and
similarly for $\;^{M_2}\!\mathrm{Ric}$.

\begin{proposition}(\cite{O'N},Corollary 43, P211) \label{5-23-3}
On a warped product $\overline{M}$ with $m_2=\mathrm{dim}
M_2 >1$, let $X,Y$ be horizontal and $V, W$ vertical. Then \\
  (1)  $ \overline{\mathrm{Ric}}(X,Y)=\;^{M_1}\!\mathrm{Ric}(X,Y)
    -\frac{m_2}{\lambda}\mathrm{Hess}(\lambda)(X, Y)$.\\
 (2) $\overline{\mathrm{Ric}}(X,V)=0.$\\
 (3) $ \overline { \mathrm{Ric}}(V,W)=\;^{M_2}\!\mathrm{Ric}(V,W)-\bar g(V,W)\lambda^{\#}$,
 where
$$\lambda^{\#}=\frac{\Delta \lambda}{\lambda}
   +(m_2-1)\frac{\bar g(\mathrm{grad}\lambda, \mathrm{grad}\lambda)}{\lambda^2}$$
and $\Delta \lambda=\mathrm{Tr}(Hess(\lambda))$ is the Laplacian on
$M_1$.
\end{proposition}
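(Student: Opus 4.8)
The plan is to read off each of the three blocks of $\overline{\mathrm{Ric}}$ by tracing the curvature operator $\bar R$ of Proposition~\ref{5-23-2} over a frame adapted to the warped splitting. First I would set up that frame: fix $(p,q)\in\overline M$, pick a $g_1$-orthonormal frame $\{X_1,\dots,X_{m_1}\}$ near $p$ (with $m_1=\dim M_1$) and a $g_2$-orthonormal frame $\{V_1,\dots,V_{m_2}\}$ near $q$. Since $\bar g(\tilde V_\alpha,\tilde V_\beta)=\lambda^2 g_2(V_\alpha,V_\beta)=\lambda^2\delta_{\alpha\beta}$, the lifted collection $\{\tilde X_1,\dots,\tilde X_{m_1},\lambda^{-1}\tilde V_1,\dots,\lambda^{-1}\tilde V_{m_2}\}$ is a $\bar g$-orthonormal frame on $\overline M$ near $(p,q)$, whose first $m_1$ members are horizontal and whose last $m_2$ are vertical. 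Tracing the curvature over this frame gives, for tangent fields $A,B$,
\begin{equation*}
\overline{\mathrm{Ric}}(A,B)=\sum_{i=1}^{m_1}\bar g(\bar R_{\tilde X_i A}B,\tilde X_i)+\frac{1}{\lambda^2}\sum_{\alpha=1}^{m_2}\bar g(\bar R_{\tilde V_\alpha A}B,\tilde V_\alpha),
\end{equation*}
the factor $\lambda^{-2}$ arising from pulling $\lambda^{-1}$ out of both arguments of the vertical frame vectors. Throughout I would carry along O'Neill's sign conventions for $\bar R$ and for the Ricci contraction, the same ones under which Proposition~\ref{5-23-2} is written, so that the signs in (1)--(3) come out exactly as displayed.

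For (1) take $A=X$, $B=Y$ horizontal. In the horizontal sum, Proposition~\ref{5-23-2}(1) identifies $\bar R_{\tilde X_i X}Y$ with the lift of $\;^{M_1}\!R_{X_i X}Y$, and since $\pi_1$ restricts to an isometry on each leaf the sum is precisely $\;^{M_1}\!\mathrm{Ric}(X,Y)$. In the vertical sum, Proposition~\ref{5-23-2}(2) gives $\bar R_{\tilde V_\alpha X}Y=(\mathrm{Hess}(\lambda)(X,Y)/\lambda)\tilde V_\alpha$, so the sum is $\lambda^{-2}\sum_\alpha(\mathrm{Hess}(\lambda)(X,Y)/\lambda)\bar g(\tilde V_\alpha,\tilde V_\alpha)=\tfrac{m_2}{\lambda}\mathrm{Hess}(\lambda)(X,Y)$, contributing, with the sign fixed by the conventions, $-\tfrac{m_2}{\lambda}\mathrm{Hess}(\lambda)(X,Y)$. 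For (2) take $A=X$ horizontal, $B=V$ vertical. Proposition~\ref{5-23-2}(3) kills the horizontal sum since $\bar R_{\tilde X_i X}V=0$; for the vertical sum, antisymmetry of $\bar R$ together with Proposition~\ref{5-23-2}(4) rewrites $\bar g(\bar R_{\tilde V_\alpha X}V,\tilde V_\alpha)$ as $-\lambda^{-1}\bar g(V,\tilde V_\alpha)\,\bar g(\bar\nabla_X\mathrm{grad}\lambda,\tilde V_\alpha)$, which vanishes because $\bar\nabla_X\mathrm{grad}\lambda$ is horizontal by Proposition~\ref{5-23-1}(1) while $\tilde V_\alpha$ is vertical. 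Hence $\overline{\mathrm{Ric}}(X,V)=0$.

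The substantive case is (3), $A=V$, $B=W$ vertical. For the horizontal sum, Proposition~\ref{5-23-2}(4) gives $\bar R_{\tilde X_i V}W=(\bar g(V,W)/\lambda)\bar\nabla_{\tilde X_i}\mathrm{grad}\lambda$, so this sum equals $\tfrac{\bar g(V,W)}{\lambda}\sum_i\bar g(\bar\nabla_{\tilde X_i}\mathrm{grad}\lambda,\tilde X_i)$; by Proposition~\ref{5-23-1}(1) each $\bar\nabla_{\tilde X_i}\mathrm{grad}\lambda$ is the lift of $\;^{M_1}\!\nabla_{X_i}\mathrm{grad}_{g_1}\lambda$, so the trace is $\mathrm{Tr}(\mathrm{Hess}(\lambda))=\Delta\lambda$ and this block becomes $\tfrac{\bar g(V,W)}{\lambda}\Delta\lambda$. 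For the vertical sum, Proposition~\ref{5-23-2}(5) splits $\bar R_{\tilde V_\alpha V}W$ into a $\;^{M_2}\!R$ term and a $\mathrm{grad}\lambda$ term. Using $\bar g=\lambda^2 g_2$ on vertical vectors, $\lambda^{-2}\sum_\alpha\bar g(\;^{M_2}\!R_{\tilde V_\alpha V}W,\tilde V_\alpha)=\sum_\alpha g_2(\;^{M_2}\!R_{V_\alpha V}W,V_\alpha)=\;^{M_2}\!\mathrm{Ric}(V,W)$; the $\mathrm{grad}\lambda$ term contributes $\lambda^{-4}\bar g(\mathrm{grad}\lambda,\mathrm{grad}\lambda)\big(\sum_\alpha\bar g(\tilde V_\alpha,W)\bar g(V,\tilde V_\alpha)-\bar g(V,W)\sum_\alpha\bar g(\tilde V_\alpha,\tilde V_\alpha)\big)$, and expanding $V,W$ in the orthonormal vertical frame evaluates the inner sums as $\lambda^2\bar g(V,W)$ and $m_2\lambda^2$, collapsing the term to $(m_2-1)\tfrac{\bar g(\mathrm{grad}\lambda,\mathrm{grad}\lambda)}{\lambda^2}\bar g(V,W)$. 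Adding the horizontal and vertical blocks, with the global sign fixed by the conventions, and factoring out $-\bar g(V,W)$ yields $\overline{\mathrm{Ric}}(V,W)=\;^{M_2}\!\mathrm{Ric}(V,W)-\bar g(V,W)\lambda^{\#}$ with $\lambda^{\#}=\tfrac{\Delta\lambda}{\lambda}+(m_2-1)\tfrac{\bar g(\mathrm{grad}\lambda,\mathrm{grad}\lambda)}{\lambda^2}$; the hypothesis $m_2=\dim M_2>1$ is what ensures $\;^{M_2}\!R$, and hence $\;^{M_2}\!\mathrm{Ric}$, genuinely enters, the case $m_2=1$ being degenerate.

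I expect the only real difficulty to be clerical: keeping the $\lambda^{-1}$-rescaling of the vertical frame consistent through every contraction, and pinning down the single global sign so that the curvature and Ricci orientation of Proposition~\ref{5-23-2} matches the signs in (1)--(3). Conceptually nothing more is needed than substituting the five identities of Proposition~\ref{5-23-2} and the one observation that a horizontal trace of $\bar\nabla\,\mathrm{grad}\lambda$ is the base Laplacian $\Delta\lambda$.
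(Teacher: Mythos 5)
The paper itself states this proposition without proof (it is quoted from O'Neill, Corollary 43), and your argument is correct and is essentially the same computation the paper performs to establish its unified formula (\ref{6-30-6}) in Theorem \ref{6-30-4}: contract the block curvature identities of Proposition \ref{5-23-2} over the adapted orthonormal frame $\{\tilde X_i,\ \lambda^{-1}\tilde V_\alpha\}$. The one point you leave to ``the conventions'' --- the global sign --- is pinned down by using O'Neill's contraction $\overline{\mathrm{Ric}}(A,B)=\sum_m \bar g\bigl(\bar R_{A E_m}B,\,E_m\bigr)$, i.e.\ with the frame vector in the \emph{second} slot of the curvature operator, so that the antisymmetry $\bar R_{AB}=-\bar R_{BA}$ produces the minus signs in front of $\tfrac{m_2}{\lambda}\mathrm{Hess}(\lambda)$ and $\lambda^{\#}$ while leaving the $\;^{M_1}\!\mathrm{Ric}$ and $\;^{M_2}\!\mathrm{Ric}$ blocks positive; with the trace as you literally wrote it (frame vector in the first slot, applied directly to the identities of Proposition \ref{5-23-2}) those Hessian and gradient terms would emerge with the opposite sign relative to the Ricci blocks.
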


\subsection{The unified formulas for Ricci curvature}
 From the precious subsection we have seen that the formulas about
Riemann curvature and Ricci curvature are divided up into several
parts according to the horizontal lift or vertical lift of the
tangent vectors attached to $M_1$ or $M_2$. To better study the RF
and HGF associated to WPM, we feel it is necessary to derive one
unified formula for Ricci tensor, no matter how the lift vectors are
either horizontal or vertical. For this we first introduc the
unified connection and unified Riemannian curvature on a general
warped product manifold $\overline{M}$ ( cf. \cite{BG}, \cite{BMO})
by introducing a new notation of lift vector.

\begin{proposition} \label{9-23-1} Let $X=(X_1,X_2),\, Y=(Y_1,Y_2)\in \mathscr{X}(\overline{M})$,
where $X_1,X_2 \in \mathscr{X}(M_1)$ and
 $Y_1, Y_2 \in \mathscr{X}(M_2)$. Denote $\nabla$ by the Levi-Civita connection on the Riemannian
 product $M_1\times M_2$ with respect to the direct product metric $g=g_1\oplus g_2$ and by
 $R$ its curvature tensor field. Then the Levi-Civita connection $\bar{\nabla}$ of $\overline{M}$ is given by
  \begin{equation} \label{5-6-1}
  \begin{array}{ll}
  \bar{\nabla}_X Y &= \nabla_X  Y
                  + \frac{1}{2\lambda^2}X_1(\lambda^2)(0,Y_2)\\
   &               + \frac{1}{2\lambda^2}Y_1(\lambda^2)(0,X_2) -\frac{1}{2}g_2(X_2,Y_2)(grad\,\lambda^2,0)\\
  &=\big(\,^{M_1}\!\nabla_{X_1}Y_1-\frac{1}{2}g_2(X_2,Y_2)grad\,\lambda^2,0 \big)\\
   &+\big(0, \,^{M_2}\!\nabla_{X_2}Y_2+ \frac{1}{2\lambda^2}X_1(\lambda^2)Y_2
   +\frac{1}{2\lambda^2}Y_1(\lambda^2)X_2 \big),\\
    \end{array}
   \end{equation}
and the relation between the curvature tensor fields of
$\overline{M}$ and $M_1 \times M_2$ is
  \begin{equation} \label{5-6-2}
 \begin{split}
    \bar R_{XY}-R_{XY} & = \frac{1}{2\lambda^2} \Big\{
            \Big(\,^{M_1}\!\nabla_{Y_1}grad_{g_1}\lambda^2
          - \frac{1}{2\lambda^2}Y_1(\lambda^2)grad_{g_1}\lambda^2,\,0 \Big)
           \wedge_{\bar g} (0, X_2)  \\
      & - \Big(\,^{M_1}\!\nabla_{X_1}grad_{g_1}\lambda^2
          - \frac{1}{2\lambda^2}X_1(\lambda^2)grad_{g_1}\lambda^2,\,0\Big)
           \wedge_{\bar g} (0, Y_2)\\
      & -\frac{1}{2\lambda^2}|grad_{g_1}\lambda^2|^2 (0,X_2)\wedge_{\bar g}(0,
      Y_2)\Big\}
 \end{split}
 \end{equation}
where the wedge product $(X\wedge_{\bar g} Y)Z =\bar g(Y,Z)X-\bar
g(X,Z)Y$, for all  $X, Y, Z \in \mathscr{X}(\overline{M})$.

\end{proposition}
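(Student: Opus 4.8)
The plan is to prove the connection formula (\ref{5-6-1}) first and then read off the curvature identity (\ref{5-6-2}) from the resulting difference tensor. For (\ref{5-6-1}) I would write $\widehat{\nabla}_X Y$ for the right-hand side and note that $\widehat{\nabla}_X Y$ is $C^\infty(\overline M)$-linear in $X$ and obeys the Leibniz rule $\widehat{\nabla}_X(fY)=(Xf)Y+f\,\widehat{\nabla}_X Y$ in $Y$, the same two formal properties $\bar\nabla$ has; hence $\widehat{\nabla}-\bar\nabla$ is a $C^\infty$-bilinear tensor field, and to prove $\widehat{\nabla}=\bar\nabla$ it suffices to verify (\ref{5-6-1}) when $X$ and $Y$ each lie in $\mathscr L(M_1)\cup\mathscr L(M_2)$, since horizontal lifts of a local frame on $M_1$ together with vertical lifts of a local frame on $M_2$ span $T\overline M$ pointwise. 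On such pairs I would simply quote Proposition \ref{5-23-1}: its items (1)--(4), in the four cases $(\tilde X,\tilde Y)$, $(\tilde X,\tilde V)$, $(\tilde V,\tilde X)$, $(\tilde V,\tilde W)$, reproduce the four groups of terms in (\ref{5-6-1}) once one feeds in the elementary identities $X_1(\lambda^2)=2\lambda\,X_1\lambda$, $grad_{g_1}\lambda^2=2\lambda\,grad_{g_1}\lambda$ and $\bar g(V,W)=\lambda^2 g_2(V,W)$ for vertical $V,W$; passing from the middle expression in (\ref{5-6-1}) to its block form uses only $\nabla=\,^{M_1}\!\nabla\oplus\,^{M_2}\!\nabla$ for the product metric $g_1\oplus g_2$. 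As an alternative that bypasses Proposition \ref{5-23-1}, one may check directly that $\widehat{\nabla}$ is torsion-free --- immediate, since its correction terms are symmetric in $X,Y$ --- and $\bar g$-compatible, and then invoke uniqueness of the Levi-Civita connection.

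For (\ref{5-6-2}), since $\bar R$ and $R$ are $(1,3)$-tensors it again suffices to verify the identity on triples of lift vector fields. For the product metric one has $\nabla=\,^{M_1}\!\nabla\oplus\,^{M_2}\!\nabla$, so $R$ reduces to $\,^{M_1}\!R$ on all-horizontal lifts, to $\,^{M_2}\!R$ on all-vertical lifts, and vanishes on every ``mixed'' triple; thus $\bar R_{XY}Z-R_{XY}Z$ is exactly the warping contribution to $\bar R$, which Proposition \ref{5-23-2}(2),(4),(5) computes case by case. Matching these with the right-hand side of (\ref{5-6-2}) then amounts to recognizing the operator $(X\wedge_{\bar g}Y)Z=\bar g(Y,Z)X-\bar g(X,Z)Y$ in O'Neill's expressions: for instance $\bar R_{XV}W=\frac{\bar g(V,W)}{\lambda}\bar\nabla_X grad\,\lambda$ turns into the first two brackets of (\ref{5-6-2}) once one uses $\bar\nabla_{X_1}grad\,\lambda=\,^{M_1}\!\nabla_{X_1}grad_{g_1}\lambda$ and the algebraic identity $\,^{M_1}\!\nabla_{Y_1}grad_{g_1}\lambda^2-\frac{1}{2\lambda^2}Y_1(\lambda^2)grad_{g_1}\lambda^2=2\lambda\,^{M_1}\!\nabla_{Y_1}grad_{g_1}\lambda$, while $\bar R_{VW}U$ from Proposition \ref{5-23-2}(5) becomes the $|grad_{g_1}\lambda^2|^2$-term via $\bar g(V,W)=\lambda^2 g_2(V,W)$ and $|grad_{g_1}\lambda^2|^2=4\lambda^2|grad_{g_1}\lambda|^2$. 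An alternative that produces (\ref{5-6-2}) in its stated shape directly is to write $\bar\nabla=\nabla+S$ with $S$ the symmetric $(1,2)$-tensor read off from (\ref{5-6-1}) and expand $\bar R_{XY}Z-R_{XY}Z=(\nabla_X S)(Y,Z)-(\nabla_Y S)(X,Z)+S(X,S(Y,Z))-S(Y,S(X,Z))$.

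The main obstacle is organizational rather than conceptual: carrying the horizontal/vertical splitting cleanly through the quadratic terms --- the $S\circ S$ part, equivalently the $\bar R_{VW}U$ contribution --- and reconciling the curvature sign convention of Proposition \ref{5-23-2} with the one underlying (\ref{5-6-2}). It is also worth fixing from the outset that in $X=(X_1,X_2)$ the symbols $X_1,X_2$ denote the $\mathscr H$- and $\mathscr V$-components of $X$ (genuine lifts of vector fields on $M_1$ and $M_2$ in the reduction step above), so that $X_1(\lambda^2)$ and $\,^{M_1}\!\nabla_{X_1}$ are unambiguous. With $S$ and the identities $X_1(\lambda^2)=2\lambda\,X_1\lambda$, $grad_{g_1}\lambda^2=2\lambda\,grad_{g_1}\lambda$ recorded, every remaining step is a short mechanical computation.
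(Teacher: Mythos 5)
Your proposal is correct, and it actually supplies more than the paper does: the paper states Proposition \ref{9-23-1} with only a citation (cf.\ the references to Bertola--Gouthier and Balmus--Montaldo--Oniciuc), and its Remark \ref{9-23-2} merely checks formula (\ref{5-6-1}) against Proposition \ref{5-23-1} in the four lift cases --- which is exactly the first half of your argument; no verification of (\ref{5-6-2}) appears anywhere in the paper. Your reduction to lift fields is properly justified (the correction terms in (\ref{5-6-1}) are $C^{\infty}$-bilinear, so $\widehat{\nabla}-\bar\nabla$ is tensorial and is determined pointwise by its values on a spanning set of lifts), and your reading of $X=(X_1,X_2)$ as the $\mathscr H$/$\mathscr V$ decomposition correctly repairs the paper's misprint ``$X_1,X_2\in\mathscr X(M_1)$.'' One concrete remark on the curvature part: the sign issue you flag is real, not hypothetical. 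O'Neill's convention in Proposition \ref{5-23-2} is $R_{XY}Z=\nabla_{[X,Y]}Z-[\nabla_X,\nabla_Y]Z$, whereas (\ref{5-6-2}) (as one sees by comparing its consequence (\ref{6-30-3}) with Proposition \ref{5-23-2}(2)) uses the opposite convention $R_{XY}Z=[\nabla_X,\nabla_Y]Z-\nabla_{[X,Y]}Z$; so a literal case-by-case match against Proposition \ref{5-23-2} produces every warping term with the wrong sign unless you flip conventions first. For that reason your second route --- writing $\bar\nabla=\nabla+S$ with $S$ read off from (\ref{5-6-1}) and expanding $\bar R_{XY}Z-R_{XY}Z=(\nabla_XS)(Y,Z)-(\nabla_YS)(X,Z)+S(X,S(Y,Z))-S(Y,S(X,Z))$ --- is the cleaner one and yields (\ref{5-6-2}) in its stated form directly; the identities $\,^{M_1}\!\nabla_{X_1}\mathrm{grad}_{g_1}\lambda^2-\frac{1}{2\lambda^2}X_1(\lambda^2)\mathrm{grad}_{g_1}\lambda^2=2\lambda\,^{M_1}\!\nabla_{X_1}\mathrm{grad}_{g_1}\lambda$ and $|\mathrm{grad}_{g_1}\lambda^2|^2=4\lambda^2|\mathrm{grad}_{g_1}\lambda|^2$ that you record are exactly what the paper itself uses in the proof of Corollary \ref{6-30-2}.
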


\begin{remark} \label{9-23-2} We can easily show that the four cases in Proposition \ref{5-23-1} can be
integrated to one form as (\ref{5-6-1}), where we denote the lifts
of $X_1\in \mathscr{X}(M_1), X_2\in \mathscr{X}(M_2)$ by $(X_1,0),
(0,X_2)\in \mathscr{X}(\overline{M})$. For example,
 \begin{equation*}
 \begin{split}
  &\bar\nabla_{(X_1,0)} (Y_1,0)=(\,^{M_2}\!\nabla_{X_1} Y_1,0)=\mathrm{lift\ of\ }\,^{M_1}\!\nabla_{X_1} Y_1,\\
  & \bar\nabla_{(X_1,0)} (0,V_2)=\bar\nabla_{(0,V_2)} (X_1,0)=\frac{X_1(\lambda)}{\lambda}(0,V_2),\\
  & \bar\nabla_{(0,V_2)} (0,W_2)=(-\frac{1}{2}g_2(V_2,W_2)\mathrm{grad\ }\lambda^2,0)+(0, \,^{M_2}\!\nabla_{V_2} W_2),\\
   &\mathrm{nor}\bar\nabla_{(0,V_2)}
   (0,W_2)=-\frac{1}{2}g_2(V_2,W_2) (\mathrm{grad}\lambda^2,0)\\
  &  \hskip 2cm  =-\frac{\bar g((0,V_2),(0,W_2))}{\lambda} (\mathrm{grad}\lambda,0),\\
   &  \mathrm{tan }\bar\nabla_{(0,V_2)} (0,W_2)=(0,\,^{M_2}\!\nabla_{V_2} W_2)
       =\mathrm{lift\ of\ }\,^{M_2}\!\nabla_{V_2} W_2.
\end{split}
 \end{equation*}
\end{remark}

From (\ref{5-6-2}), we easily obtain
 \begin{proposition}\label{6-30-1}
 \begin{equation} \label{5-6-3}
 \begin{split}
&\bar{R} _{(X_1, X_2)(Y_1,Y_2)} (Z_1,Z_2)\\
    &  =(\,^{M_1}\!R_{ X_1 Y_1} Z_1,\,^{M_2}\!R_{ X_2 Y_2} Z_2)\\
    & +\frac{1}{2} g_2(X_2,Z_2) \big( \,^{M_1}\!\nabla_{Y_1} \mathrm{grad}\ \lambda^2
       -\frac{1}{2\lambda^2}  Y_1(\lambda^2)\mathrm{grad}\ \lambda^2,0 \big)\\
  & -\frac{1}{2} g_2(Y_2,Z_2) \big( \,^{M_1}\!\nabla_{X_1} \mathrm{grad}\ \lambda^2
             -\frac{1}{2\lambda^2} X_1(\lambda^2)\mathrm{grad}\ \lambda^2 ,0 \big)\\
  &  +\Big(0,  \frac{1}{2\lambda^2} g_1 ( \,^{M_1}\!\nabla_{X_1} \mathrm{grad}\ \lambda^2
            -\frac{1}{2\lambda^2}X_{1}(\lambda^2)\mathrm{grad}\ \lambda^2,\,Z_1 )Y_2 \big) \\
  &  -\Big(0,  \frac{1}{2\lambda^2} g_1 ( \,^{M_1}\!\nabla_{Y_1} \mathrm{grad}\ \lambda^2
             -\frac{1}{2\lambda^2}Y_1(\lambda^2)\mathrm{grad}\ \lambda^2,\,Z_1)X_2 \big) \\
 & +\big(0,  \frac{1}{4\lambda^2} \mid\mathrm{grad}\ \lambda^2 \mid^2 g_2(X_2,Z_2)Y_2\Big)\\
  &  -\big(0,\frac{1}{4\lambda^2} \mid\mathrm{grad}\ \lambda^2 \mid^2 g_2(Y_2,Z_2)X_2\Big).
 \end{split}
 \end{equation}
\end{proposition}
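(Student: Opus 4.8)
The plan is to read off (\ref{5-6-3}) from (\ref{5-6-2}) by evaluating both curvature operators on an arbitrary split vector field $Z=(Z_1,Z_2)\in\mathscr{X}(\overline M)$ and expanding the three wedge products. I would begin by noting that, since the product connection $\nabla$ splits as $\nabla_{(X_1,X_2)}(Y_1,Y_2)=(\,^{M_1}\!\nabla_{X_1}Y_1,\,^{M_2}\!\nabla_{X_2}Y_2)$ (as is implicit in the second line of (\ref{5-6-1})), so does its curvature: $R_{(X_1,X_2)(Y_1,Y_2)}(Z_1,Z_2)=(\,^{M_1}\!R_{X_1Y_1}Z_1,\,^{M_2}\!R_{X_2Y_2}Z_2)$. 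This is exactly the first term on the right of (\ref{5-6-3}), so everything else must come from $(\bar R_{XY}-R_{XY})(Z_1,Z_2)$.

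Next I would record two elementary facts about pairing the warped metric against a split vector: for $A\in\mathscr{X}(M_1)$ one has $\bar g((A,0),(Z_1,Z_2))=g_1(A,Z_1)$, while for $B\in\mathscr{X}(M_2)$ one has $\bar g((0,B),(Z_1,Z_2))=\lambda^2 g_2(B,Z_2)$. Abbreviating $A_X:=\,^{M_1}\!\nabla_{X_1}\mathrm{grad}\,\lambda^2-\frac{1}{2\lambda^2}X_1(\lambda^2)\,\mathrm{grad}\,\lambda^2$ and defining $A_Y$ analogously (both horizontal), the definition $(P\wedge_{\bar g}Q)Z=\bar g(Q,Z)P-\bar g(P,Z)Q$ then gives
\[
\big((A_Y,0)\wedge_{\bar g}(0,X_2)\big)(Z_1,Z_2)=\lambda^2 g_2(X_2,Z_2)(A_Y,0)-g_1(A_Y,Z_1)(0,X_2),
\]
the analogous identity with $X\leftrightarrow Y$ for the $(A_X,0)\wedge_{\bar g}(0,Y_2)$ term, and
\[
\big((0,X_2)\wedge_{\bar g}(0,Y_2)\big)(Z_1,Z_2)=\lambda^2 g_2(Y_2,Z_2)(0,X_2)-\lambda^2 g_2(X_2,Z_2)(0,Y_2).
\]
Inserting these into (\ref{5-6-2}), distributing the overall factor $\frac{1}{2\lambda^2}$ (and the extra $\frac{1}{2\lambda^2}|\mathrm{grad}\,\lambda^2|^2$ on the last wedge), and then separating the horizontal components $(\,\cdot\,,0)$ from the vertical ones $(0,\,\cdot\,)$ reproduces exactly the six remaining terms of (\ref{5-6-3}): the first slots of the first two wedges give the two $\pm\tfrac12 g_2(\cdot,\cdot)(A_\bullet,0)$ terms, their second slots give the two $\mp(0,\frac{1}{2\lambda^2}g_1(A_\bullet,Z_1)\,\cdot\,)$ terms (the $\lambda^2$'s cancelling against the prefactor), and the last wedge gives the two $\pm(0,\frac{1}{4\lambda^2}|\mathrm{grad}\,\lambda^2|^2 g_2(\cdot,\cdot)\,\cdot\,)$ terms.

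There is no real conceptual obstacle here — as the statement says, the formula follows immediately — so the only care required is bookkeeping: tracking which factor sits in the horizontal versus the vertical slot (hence which pairing with $Z$ carries a $\lambda^2$), propagating the signs coming from the antisymmetry of $\wedge_{\bar g}$ together with the minus signs already present in (\ref{5-6-2}), and checking that the powers of $\lambda^2$ cancel as claimed. As a consistency check I would specialize: setting $\lambda\equiv1$ must return the product-curvature splitting, and feeding in purely horizontal or purely vertical arguments $(X_1,0),(0,X_2),\dots$ must recover the five cases of Proposition \ref{5-23-2}.
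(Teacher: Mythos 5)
Your proposal is correct and is essentially the argument the paper intends: the paper offers no details beyond "From (\ref{5-6-2}), we easily obtain," and the intended computation is exactly yours — split the product curvature $R_{XY}Z=(\,^{M_1}\!R_{X_1Y_1}Z_1,\,^{M_2}\!R_{X_2Y_2}Z_2)$, evaluate the three wedge products on $(Z_1,Z_2)$ using $\bar g((A,0),(Z_1,Z_2))=g_1(A,Z_1)$ and $\bar g((0,B),(Z_1,Z_2))=\lambda^2 g_2(B,Z_2)$, and track the cancelling powers of $\lambda^2$. Your sign and prefactor bookkeeping checks out against all eight terms of (\ref{5-6-3}), and the two consistency checks you propose are sensible.
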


\begin{corollary}\label{6-30-2}
 \begin{equation} \label{6-30-3}
 \begin{split}
&\bar{R} _{(X_1, X_2)(Y_1,Y_2)} (Z_1,Z_2)\\
    &  =(\,^{M_1}\!R_{ X_1 Y_1} Z_1,\,^{M_2}\!R_{ X_2 Y_2} Z_2)\\
    & +\lambda g_2(X_2,Z_2) \big( \,^{M_1}\!\nabla_{Y_1} \mathrm{grad}\ \lambda, 0 \big)
   -\lambda g_2(Y_2,Z_2) \big( \,^{M_1}\!\nabla_{X_1} \mathrm{grad}\ \lambda, 0 \big)\\
  &  + \frac{1}{\lambda}\mathrm{Hess}(\lambda)(X_1,Z_1)(0,Y_2)
     -\frac{1}{\lambda}\mathrm{Hess}(\lambda)(Y_1,Z_1)(0,X_2)\\
   & +\mid\mathrm{grad}\ \lambda \mid^2 g_2(X_2,Z_2)(0,Y_2)
    - \mid\mathrm{grad}\ \lambda \mid^2 g_2(Y_2,Z_2) (0,X_2).
 \end{split}
 \end{equation}
\end{corollary}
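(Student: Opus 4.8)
The plan is to derive \eqref{6-30-3} from \eqref{5-6-3} by a purely algebraic rewriting that replaces every occurrence of $\lambda^2$ and its derivatives by expressions in $\lambda$. The starting point is the elementary identities $\operatorname{grad}\lambda^2 = 2\lambda\,\operatorname{grad}\lambda$, $X_1(\lambda^2) = 2\lambda\,X_1(\lambda)$, and $|\operatorname{grad}\lambda^2|^2 = 4\lambda^2\,|\operatorname{grad}\lambda|^2$, all computed with respect to $g_1$ on the base $M_1$. These are immediate from the Leibniz rule and will be stated without further comment.

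First I would treat the two ``horizontal'' correction terms, i.e.\ the ones of the form $\big(\,{}^{M_1}\!\nabla_{Y_1}\operatorname{grad}\lambda^2 - \tfrac{1}{2\lambda^2}Y_1(\lambda^2)\operatorname{grad}\lambda^2,\,0\big)$ multiplied by $\tfrac12 g_2(X_2,Z_2)$. Substituting $\operatorname{grad}\lambda^2 = 2\lambda\operatorname{grad}\lambda$ and using that ${}^{M_1}\!\nabla$ is a connection (so ${}^{M_1}\!\nabla_{Y_1}(2\lambda\operatorname{grad}\lambda) = 2Y_1(\lambda)\operatorname{grad}\lambda + 2\lambda\,{}^{M_1}\!\nabla_{Y_1}\operatorname{grad}\lambda$), the term $\tfrac{1}{2\lambda^2}Y_1(\lambda^2)\operatorname{grad}\lambda^2 = \tfrac{1}{2\lambda^2}\cdot 2\lambda Y_1(\lambda)\cdot 2\lambda\operatorname{grad}\lambda = 2Y_1(\lambda)\operatorname{grad}\lambda$ cancels exactly the unwanted piece, leaving $2\lambda\,{}^{M_1}\!\nabla_{Y_1}\operatorname{grad}\lambda$. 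Combined with the prefactor $\tfrac12 g_2(X_2,Z_2)$ this gives $\lambda g_2(X_2,Z_2)\big(\,{}^{M_1}\!\nabla_{Y_1}\operatorname{grad}\lambda,0\big)$, which is the second line of \eqref{6-30-3}; the $X_1\leftrightarrow Y_1$ counterpart is identical.

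Next I would handle the two ``vertical'' correction terms of the form $\big(0,\tfrac{1}{2\lambda^2}g_1(\,{}^{M_1}\!\nabla_{X_1}\operatorname{grad}\lambda^2 - \tfrac{1}{2\lambda^2}X_1(\lambda^2)\operatorname{grad}\lambda^2,\,Z_1)Y_2\big)$. By the same cancellation as above the vector inside $g_1(\,\cdot\,,Z_1)$ becomes $2\lambda\,{}^{M_1}\!\nabla_{X_1}\operatorname{grad}\lambda$, and since $g_1(\,{}^{M_1}\!\nabla_{X_1}\operatorname{grad}\lambda,\,Z_1) = \operatorname{Hess}(\lambda)(X_1,Z_1)$ (the definition of the Hessian), the coefficient becomes $\tfrac{1}{2\lambda^2}\cdot 2\lambda\operatorname{Hess}(\lambda)(X_1,Z_1) = \tfrac1\lambda\operatorname{Hess}(\lambda)(X_1,Z_1)$, giving the third line of \eqref{6-30-3}. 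Finally, the last two terms of \eqref{5-6-3} carry the factor $\tfrac{1}{4\lambda^2}|\operatorname{grad}\lambda^2|^2 = \tfrac{1}{4\lambda^2}\cdot 4\lambda^2|\operatorname{grad}\lambda|^2 = |\operatorname{grad}\lambda|^2$, which reproduces the fourth line verbatim. Collecting the four groups of terms together with the unchanged leading term $(\,{}^{M_1}\!R_{X_1Y_1}Z_1,\,{}^{M_2}\!R_{X_2Y_2}Z_2)$ yields \eqref{6-30-3}. The whole argument is routine bookkeeping; the only place that requires any care — and hence the ``main obstacle'' — is making sure the sign conventions for $\wedge_{\bar g}$ and for the Hessian match between \eqref{5-6-2}/\eqref{5-6-3} and the statement, and that the $g_1$ used to form $\operatorname{grad}$ and $|\cdot|^2$ is consistently the base metric throughout, so that no stray factor of $\lambda^2$ from the $\bar g$-inner product sneaks in.
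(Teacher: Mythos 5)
Your proposal is correct and is essentially the paper's own proof: both arguments substitute $\mathrm{grad}\,\lambda^2=2\lambda\,\mathrm{grad}\,\lambda$ and $X_1(\lambda^2)=2\lambda X_1(\lambda)$ into (\ref{5-6-3}), observe the cancellation $\,^{M_1}\!\nabla_{X_1}\mathrm{grad}\,\lambda^2-\tfrac{1}{2\lambda^2}X_1(\lambda^2)\mathrm{grad}\,\lambda^2=2\lambda\,^{M_1}\!\nabla_{X_1}\mathrm{grad}\,\lambda$, identify $g_1(\,^{M_1}\!\nabla_{X_1}\mathrm{grad}\,\lambda,Z_1)$ with $\mathrm{Hess}(\lambda)(X_1,Z_1)$, and simplify $\tfrac{1}{4\lambda^2}|\mathrm{grad}\,\lambda^2|^2$ to $|\mathrm{grad}\,\lambda|^2$. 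No gaps.
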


\begin{proof}
  Note that
  \[\begin{split}
   & \,^{M_1}\!\nabla_{X_1} \mathrm{grad}\ \lambda^2
       -\frac{1}{2\lambda^2}  X_1(\lambda^2)\mathrm{grad}\ \lambda^2\\
  & =\,^{M_1}\!\nabla_{X_1} (2\lambda \mathrm{grad}\ \lambda)
       -\frac{1}{2\lambda^2} 2\lambda X_1(\lambda) 2\lambda\mathrm{grad}\ \lambda \\
  &=2\lambda\,^{M_1}\!\nabla_{X_1}  \mathrm{grad}\ \lambda
  \end{split}\]
and
 \[\begin{split}
   &\frac{1}{2\lambda^2} g_1 \left( \,^{M_1}\!\nabla_{X_1} \mathrm{grad}\ \lambda^2
            -\frac{1}{2\lambda^2}X_{1}(\lambda^2)\mathrm{grad}\ \lambda^2,\,Z_1  \right )\\
     &=\frac{1}{\lambda}g_1\left( \,^{M_1}\!\nabla_{X_1}  \mathrm{grad}\ \lambda,\,Z_1 \right)\\
     &= \frac{1}{\lambda}\mathrm{Hess}(\lambda)(X_1,Z_1).
  \end{split}\]
 Exchanging $X_1$ for $Y_1$, we obtain
 \[ \,^{M_1}\!\nabla_{Y_1} \mathrm{grad}\ \lambda^2
       -\frac{1}{2\lambda^2}  Y_1(\lambda^2)\mathrm{grad}\ \lambda^2
       =2\lambda\,^{M_1}\!\nabla_{Y_1}  \mathrm{grad}\ \lambda,\]
 \[ \frac{1}{2\lambda^2} g_1 \left( \,^{M_1}\!\nabla_{Y_1} \mathrm{grad}\ \lambda^2
            -\frac{1}{2\lambda^2}Y_{1}(\lambda^2)\mathrm{grad}\ \lambda^2,\,Z_1  \right )
 =\frac{1}{\lambda}\mathrm{Hess}(\lambda)(Y_1,Z_1). \]
Putting these facts together, (\ref{5-6-3}) can reduce to
\begin{equation} \notag
 \begin{split}
&\bar{R} _{(X_1, X_2)(Y_1,Y_2)} (Z_1,Z_2)\\
    &  =(\,^{M_1}\!R_{ X_1 Y_1} Z_1,\,^{M_2}\!R_{ X_2 Y_2} Z_2)\\
    & +\lambda g_2(X_2,Z_2) \big( \,^{M_1}\!\nabla_{Y_1} \mathrm{grad}\ \lambda, 0 \big)
   -\lambda g_2(Y_2,Z_2) \big( \,^{M_1}\!\nabla_{X_1} \mathrm{grad}\ \lambda, 0 \big)\\
  &  + \frac{1}{\lambda}\mathrm{Hess}(\lambda)(X_1,Z_1)(0,Y_2)
     -\frac{1}{\lambda}\mathrm{Hess}(\lambda)(Y_1,Z_1)(0,X_2)\\
   & +\mid\mathrm{grad}\ \lambda \mid^2 g_2(X_2,Z_2)(0,Y_2)
    - \mid\mathrm{grad}\ \lambda \mid^2 g_2(Y_2,Z_2) (0,X_2),
 \end{split}
 \end{equation}
as claimed (\ref{6-30-3}).
\end{proof}

Thus we have the unified formulas for (0,4)-type Riemannian
curvature tensor $\overline{Rm}$, Ricci curvature
$\overline{\mathrm{Ric} }$ and saclar curvature
$\overline{\mathrm{Scal} }$.

\begin{theorem}\label{6-30-4} On a warped product $\overline{M}$ with $m_2=\mathrm{dim} M_2\geq 2$. Let
$(X_1,X_2),(Y_1,Y_2),(Z_1,Z_2),(W_1,W_2)\in \mathscr{X}(\overline{M})$. Then\\
(i)\quad (0,4)-type Riemannian curvature tensor $\overline{Rm}$
satisfies
 \begin{equation} \label{6-30-5}
 \begin{split}
 &\overline{\mathrm{Rm} } \big((W_1,W_2),(Z_1,Z_2), (X_1, X_2), (Y_1,Y_2)\big)\\
  &  =\,^{M_1}\!\mathrm{Rm}(W_1,Z_1, X_1,Y_1)+\lambda^2\,^{M_2}\!\mathrm{Rm}(W_2,Z_2, X_2,Y_2)\\
   &\hskip 6mm     +\lambda \mathrm{Hess}(\lambda)(Y_1,W_1)g_2(X_2,Z_2)
      -\lambda \mathrm{Hess}(\lambda)(X_1,W_1) g_2(Y_2,Z_2) \\
   &+ \lambda \mathrm{Hess}(\lambda)(X_1,Z_1)g_2(W_2,Y_2)
     - \lambda \mathrm{Hess}(\lambda)(Y_1,Z_1)g_2(W_2,X_2)\\
   & +\lambda^2 \mid\mathrm{grad}\ \lambda \mid^2 g_2(X_2,Z_2)g_2(W_2,Y_2)
    - \lambda^2 \mid\mathrm{grad}\ \lambda \mid^2 g_2(Y_2,Z_2)g_2(W_2,X_2).
    \end{split}
     \end{equation}

\noindent (ii)\quad The Ricci curvature tensor $\overline{Ric}$
satisfies
     \begin{equation}\label{6-30-6}
     \begin{split}
     \overline{\mathrm{Ric} } \big( (X_1, X_2), (Y_1,Y_2) \big)
  &=\,^{M_1}\!\mathrm{Ric}(X_1,Y_1)+\,^{M_2}\!\mathrm{Ric}(X_2,Y_2)\\
  &-\lambda g_2(X_2,Y_2)\Delta_{M_1}\lambda
  -\frac{m_2}{\lambda}\mathrm{Hess}(\lambda)(X_1,Y_1)\\
  &-(m_2-1)\mid\mathrm{grad}\ \lambda \mid^2 g_2(X_2,Y_2).
    \end{split}
     \end{equation}
\noindent (iii) \quad The scalar curvature $\overline{Scal}$ is
     \begin{equation}\label{6-30-7}
     \begin{split}
    \overline{\mathrm{Scal} }& =\,^{M_1}\!\mathrm{Scal}+\frac{1}{\lambda^2}\,^{M_2}\!\mathrm{Scal}\\
       & -\frac{2m_2}{\lambda}\Delta_{M_1}\lambda
   -\frac{m_2(m_2-1)}{\lambda^2}\mid\mathrm{grad}\ \lambda \mid^2.
\end{split}
 \end{equation}
\end{theorem}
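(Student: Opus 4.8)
The plan is to derive all three unified formulas from the curvature identity in Corollary~\ref{6-30-2}, essentially by the standard procedure of lowering an index and contracting, but carefully tracking how the warped metric $\bar g = g_1\oplus\lambda^2 g_2$ interacts with the two factors. First I would obtain part (i) by taking the inner product of the right-hand side of \eqref{6-30-3} with a vector $(W_1,W_2)$ using $\bar g$. The key point is that $\bar g\big((A_1,0),(W_1,W_2)\big)=g_1(A_1,W_1)$ while $\bar g\big((0,A_2),(W_1,W_2)\big)=\lambda^2 g_2(A_2,W_2)$, so each of the six terms in \eqref{6-30-3} picks up exactly one factor of $\lambda^2$ (from the vertical terms) or none (from the horizontal terms). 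The terms $\lambda g_2(X_2,Z_2)\big(\,^{M_1}\!\nabla_{Y_1}\mathrm{grad}\,\lambda,0\big)$ pair against $W_1$ to give $\lambda\,g_1(\,^{M_1}\!\nabla_{Y_1}\mathrm{grad}\,\lambda,W_1)g_2(X_2,Z_2)=\lambda\,\mathrm{Hess}(\lambda)(Y_1,W_1)g_2(X_2,Z_2)$, and likewise for the swapped pair; the terms $\tfrac1\lambda\mathrm{Hess}(\lambda)(X_1,Z_1)(0,Y_2)$ pair against $W_2$ to give $\tfrac1\lambda\mathrm{Hess}(\lambda)(X_1,Z_1)\lambda^2 g_2(Y_2,W_2)=\lambda\,\mathrm{Hess}(\lambda)(X_1,Z_1)g_2(Y_2,W_2)$; and the $|\mathrm{grad}\,\lambda|^2$ terms similarly acquire one $\lambda^2$. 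Reading off the result and noting $\bar g\big((\,^{M_1}\!R_{X_1Y_1}Z_1,\,^{M_2}\!R_{X_2Y_2}Z_2),(W_1,W_2)\big)=\,^{M_1}\!\mathrm{Rm}(W_1,Z_1,X_1,Y_1)+\lambda^2\,^{M_2}\!\mathrm{Rm}(W_2,Z_2,X_2,Y_2)$ gives \eqref{6-30-5}.

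Next, for part (ii) I would contract \eqref{6-30-5} to form the Ricci tensor. The cleanest route is to fix $\bar g$-orthonormal frames adapted to the product structure: an orthonormal frame $\{e_a\}$ for $(M_1,g_1)$ and an orthonormal frame $\{f_\alpha\}$ for $(M_2,\lambda^2 g_2)$, so that $f_\alpha=\lambda^{-1}\hat f_\alpha$ with $\{\hat f_\alpha\}$ orthonormal for $g_2$. Then $\overline{\mathrm{Ric}}\big((X_1,X_2),(Y_1,Y_2)\big)=\sum_a\overline{\mathrm{Rm}}\big((e_a,0),(X_1,X_2),(Y_1,Y_2),(e_a,0)\big)+\sum_\alpha\overline{\mathrm{Rm}}\big((0,f_\alpha),(X_1,X_2),(Y_1,Y_2),(0,f_\alpha)\big)$. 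Tracing over the $M_1$-directions $\{e_a\}$ picks out $\,^{M_1}\!\mathrm{Ric}(X_1,Y_1)$ from the first term and, from the Hessian terms of \eqref{6-30-5}, a contribution $-\tfrac{m_2}{\lambda}\mathrm{Hess}(\lambda)(X_1,Y_1)$ once one uses $\sum_\alpha g_2(f_\alpha,f_\alpha)$-type traces carefully (here the factor $m_2$ comes from $\mathrm{tr}_{g_2}g_2=m_2$). Tracing over the $M_2$-directions $\{f_\alpha\}$: the term $\lambda^2\,^{M_2}\!\mathrm{Rm}$ contracts with $g_2$-orthonormal $\hat f_\alpha=\lambda f_\alpha$, the two factors of $\lambda$ cancelling the $\lambda^{-2}$ implicit in using $f_\alpha$, to yield $\,^{M_2}\!\mathrm{Ric}(X_2,Y_2)$; the term $\lambda\,\mathrm{Hess}(\lambda)(e_a,e_a)g_2(X_2,Y_2)$ summed over $a$ (which does appear after the index relabeling of \eqref{6-30-5}) produces $-\lambda\,\Delta_{M_1}\lambda\, g_2(X_2,Y_2)$ since $\sum_a\mathrm{Hess}(\lambda)(e_a,e_a)=\Delta_{M_1}\lambda$; and the $\lambda^2|\mathrm{grad}\,\lambda|^2$ terms contract to $-(m_2-1)|\mathrm{grad}\,\lambda|^2 g_2(X_2,Y_2)$, the coefficient $m_2-1$ arising because tracing $g_2(X_2,Z_2)g_2(W_2,Y_2)-g_2(Y_2,Z_2)g_2(W_2,X_2)$ over $Z_2=W_2$ gives $m_2-1$ times $g_2(X_2,Y_2)$. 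Collecting these gives \eqref{6-30-6}. As a consistency check I would verify that restricting to purely horizontal or purely vertical arguments recovers Proposition~\ref{5-23-3}(1) and (3) respectively, and that the mixed case gives $0$ as in (2).

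Finally, part (iii) follows by one more contraction: $\overline{\mathrm{Scal}}=\mathrm{tr}_{\bar g}\overline{\mathrm{Ric}}=\sum_a\overline{\mathrm{Ric}}\big((e_a,0),(e_a,0)\big)+\sum_\alpha\overline{\mathrm{Ric}}\big((0,f_\alpha),(0,f_\alpha)\big)$. The $M_1$-trace of \eqref{6-30-6} gives $\,^{M_1}\!\mathrm{Scal}-\tfrac{m_2}{\lambda}\Delta_{M_1}\lambda$ (the $g_2$-terms vanish on horizontal inputs); the $M_2$-trace, remembering that $f_\alpha=\lambda^{-1}\hat f_\alpha$ so each $g_2(f_\alpha,f_\alpha)$ contributes $\lambda^{-2}$ and $\sum_\alpha g_2(\hat f_\alpha,\hat f_\alpha)=m_2$, gives $\tfrac1{\lambda^2}\,^{M_2}\!\mathrm{Scal}-\tfrac{m_2}{\lambda}\Delta_{M_1}\lambda-\tfrac{m_2(m_2-1)}{\lambda^2}|\mathrm{grad}\,\lambda|^2$. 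Adding yields \eqref{6-30-7}. I expect the main obstacle to be purely bookkeeping: keeping straight which factors of $\lambda$ are absorbed when one passes between a $\bar g$-orthonormal vertical frame and a $g_2$-orthonormal one, and making sure the combinatorial coefficients $m_2$ and $m_2-1$ come out with the right split between the Laplacian term and the gradient-squared term (this is exactly the place where $\lambda^{\#}$ in Proposition~\ref{5-23-3} was assembled, so cross-checking against that formula is the natural safeguard). No genuinely new idea is needed beyond the unified curvature formula \eqref{6-30-3} already in hand.
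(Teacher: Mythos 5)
Your proposal is correct and follows essentially the same route as the paper: lower an index on \eqref{6-30-3} against $\bar g=g_1\oplus\lambda^2 g_2$ to obtain \eqref{6-30-5}, then contract once and twice with the adapted orthonormal frame $\{(e_j,0),(0,\tfrac{1}{\lambda}\bar e_\alpha)\}$ to obtain \eqref{6-30-6} and \eqref{6-30-7}. The only quibble is a bookkeeping slip in attributing terms to the two partial traces (with the paper's slot conventions the $-\lambda\Delta_{M_1}\lambda\,g_2(X_2,Y_2)$ term arises from the horizontal trace of $\mathrm{Hess}(\lambda)(e_i,e_i)$, while the $-\tfrac{m_2}{\lambda}\mathrm{Hess}(\lambda)(X_1,Y_1)$ term and the factor $m_2$ arise from the vertical trace), but all terms and coefficients come out correctly.
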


\begin{proof}
(i)  Note that
     \[ \overline{\mathrm{Rm} } \big((W_1,W_2),(Z_1,Z_2), (X_1, X_2), (Y_1,Y_2)\big)
        =\bar g\big((W_1,W_2),\bar{R} _{(X_1, X_2)(Y_1,Y_2)} (Z_1,Z_2)\big) \]
and $\bar g=g_1 \oplus \lambda^2 g_2$. By (\ref{6-30-3}) and the
 property of $\mathrm{Hess}(\lambda)$, we immediately obtain (\ref{6-30-5}).

As to the assertions (ii) and (iii), let $\{e_j\}_{j=1}^{m_1} $ be a
local orthonormal frame on $(M_1,g_1)$ and $\{\bar
e_\alpha\}_{\alpha=1}^{m_2}$ on
  $(M_2,g_2)$. Then $\{(e_j,0),(0,\frac{1}{\lambda} \bar e_\alpha)\}_{j=1,\ldots,m_1,\alpha=1,\ldots,m_2}$
forms a local orthonormal frame on $\overline{M}$. By the definition
of Ricci curvature, we have
    \begin{equation}\label{10-4}
     \begin{split}
    \overline{\mathrm{Ric} } \big( (X_1, X_2), (Y_1,Y_2) \big)
    & =\sum\limits_{i=1}^{m_1} \overline{\mathrm{Rm} } \big((e_i,0),(X_1, X_2),(e_i,0),
    (Y_1,Y_2)\big)\\
    &  + \sum\limits_{\alpha=1}^{m_2}\overline{\mathrm{Rm} }
       \big( (0,\frac{1}{\lambda} \bar e_\alpha),(X_1, X_2),(0,\frac{1}{\lambda} \bar
       e_\alpha).
       (Y_1,Y_2)\big)
       \end{split}
    \end{equation}
by substituting (\ref{6-30-5}) into (\ref{10-4}) and keeping in mind
the relation $g_2(\bar e_\alpha,X_2)g_2(\bar
e_\alpha,Y_2)=g_2(\sum_{\alpha}g_2(\bar e_\alpha,X_2)\bar
e_\alpha,Y_2)= g_2(X_2,Y_2)$, (\ref{6-30-6}) follows.

 Furthermore, since the scalar curvature $\overline{\mathrm{Scal}}$ satisfies
    \begin{equation}\label{10-4-0}
     \begin{split}
    \overline{\mathrm{Scal} }  =\sum\limits_{i=1}^{m_1} \overline{\mathrm{Ric} } \big((e_i,0),(e_i,0)\big)
    +\sum\limits_{\alpha=1}^{m_2}\overline{\mathrm{Ric} }
       \big( (0,\frac{1}{\lambda} \bar e_\alpha),(0,\frac{1}{\lambda} \bar e_\alpha)
       \big),
    \end{split}
    \end{equation}
substituting (\ref{6-30-6}) into (\ref{10-4-0}) gives
(\ref{6-30-7}).
\end{proof}

\begin{remark} It is not hard to verify that the three cases in Theorem (\ref{6-30-4}) agree with the
results in Propositions \ref{5-23-2} and \ref{5-23-3}. For instance,
by (\ref{6-30-6}), we have
    \begin{equation}\notag
     \begin{split}
     \overline{\mathrm{Ric} } \big( (0, V), (0,W) \big)
  &=\,^{M_2}\!\mathrm{Ric}(V,W)
  -\lambda g_2(V,W)\Delta_{M_1}\lambda
  -(m_2-1)\mid\mathrm{grad}\ \lambda \mid_{g_1}^2 g_2(V,W)\\
 &=\,^{M_2}\!\mathrm{Ric}(V,W)-\bigg(\frac{1}{\lambda}\Delta_{M_1}\lambda
          +\frac{m_2-1}{\lambda^2}\mid\mathrm{grad}\ \lambda \mid_{\bar g}^2
          \bigg) \bar g(V,W),
    \end{split}
     \end{equation}
 which is consistent with the third case (3) in Proposition \ref{5-23-3}.

\end{remark}

\begin{remark} Since (\ref{6-30-6}) and (\ref{6-30-7}) contain a term with factor $(m_2-1)$,
 to avoid trivial case, the dimension of $M_2$ is restrict to $m_2\geq 2$.
\end{remark}

\section{The behavior of warping function under Ricci flow }

 In this section, we shall use the unified version of Ricci curvature formula in the previous section
 to characterize the behavior of warping function under Ricci flow. More specifically, we wish to determine
 a certain condition which a smooth warping function satisfies such that the warped product metric is
 the solution to the corresponding Ricci flow.

 Before we launch the issue, let us state the
definition of the Ricci flow \cite{Ha, Br}.

\begin{definition} \label{5-15-1} Let $M$ be a manifold, and let $g(t),\ t\in [0,T)$, be
a one-parameter family of Riemannian metrics on $M$. We say that
$g(t)$ is a solution to the Rici flow if
  \begin{equation} \label{5-15-2}
  \frac{\partial}{\partial t}g(t)=-2Ric.
  \end{equation}
\end{definition}

For the warped product metrics, the Ricci flow is the evolution
equation
   \begin{equation} \label{9-28}
   \frac{\partial \bar g(x,y,t)}{\partial t}=-2\overline{\mathrm{Ric}}
\end{equation}
for a one-parameter family of Riemannian metrics $\bar g(t),\ t\in
[0,\bar T)$ on $\overline{M}$.

For behavior of the warping function on warped product manifold
under RF, we have the following main result.

\begin{theorem}\label{5-15-0} Suppose that Riemannian manifold $(M_1,g_1)$ is compact
(or complete non-compact) and $(M_2,g_2)$ is compact. Let
$(M_1,g_1(t))$ and $(M_2, g_2(t)$ be solutions to the Ricci flow on
a common time interval $[0, \bar T)$. Then the warped product metric
   $\bar g(t)=g_1(x,t)\oplus \lambda^2(x,t)g_2(y,t)$ is a solution to the
Ricci flow (\ref{9-28}) if and only if the warping function
$\lambda=\lambda(x,t), t\in [0,\bar T)$
 satisfies
   \begin{equation} \label{7-2-1}
   \begin{split}
  \frac{\partial \lambda(x,t)}{\partial t}
           -\left( 1+\frac{m_2}{m_1\lambda^2} \right)\Delta_{M_1}\lambda
     -\frac{m_2-1}{\lambda}\mid\mathrm{grad}\ \lambda \mid^2
    =\frac{\lambda^2-1}{m_2\lambda}\,^{M_2}\!\mathrm{Scal}
   \end{split}
   \end{equation}
 and
     \begin{equation} \label{7-2-1'}
       \mathrm{Hess}(\lambda)=0,
   \end{equation}
 where $m_i=dim M_i$.
 \end{theorem}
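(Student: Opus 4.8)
The plan is to compute $\partial_t \bar g$ directly from the warped product structure and compare it term-by-term with $-2\overline{\mathrm{Ric}}$ using the unified formula (\ref{6-30-6}). Writing a generic tangent vector to $\overline{M}$ as $(X_1,X_2)$, we have $\bar g\big((X_1,X_2),(Y_1,Y_2)\big)=g_1(X_1,Y_1)+\lambda^2 g_2(X_2,Y_2)$, so differentiating in $t$ (with $X_i,Y_i$ held fixed as coordinate vector fields) gives
\begin{equation*}
\frac{\partial}{\partial t}\bar g\big((X_1,X_2),(Y_1,Y_2)\big)
= \frac{\partial g_1}{\partial t}(X_1,Y_1) + 2\lambda\frac{\partial\lambda}{\partial t}\,g_2(X_2,Y_2) + \lambda^2\frac{\partial g_2}{\partial t}(X_2,Y_2).
\end{equation*}
Since $g_1(t)$ and $g_2(t)$ solve the Ricci flow, $\partial_t g_1 = -2\,^{M_1}\!\mathrm{Ric}$ and $\partial_t g_2 = -2\,^{M_2}\!\mathrm{Ric}$. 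On the other side, by (\ref{6-30-6}),
\begin{equation*}
-2\overline{\mathrm{Ric}}\big((X_1,X_2),(Y_1,Y_2)\big) = -2\,^{M_1}\!\mathrm{Ric}(X_1,Y_1) - 2\,^{M_2}\!\mathrm{Ric}(X_2,Y_2) + 2\lambda g_2(X_2,Y_2)\Delta_{M_1}\lambda + \frac{2m_2}{\lambda}\mathrm{Hess}(\lambda)(X_1,Y_1) + 2(m_2-1)|\mathrm{grad}\,\lambda|^2 g_2(X_2,Y_2).
\end{equation*}

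Next I would equate the two expressions and separate them into three independent pieces according to the block structure of $\overline{M}$: the pure $M_1$-block (take $X_2=Y_2=0$), the pure $M_2$-block (take $X_1=Y_1=0$), and the mixed block. The mixed block $\overline{\mathrm{Ric}}((X_1,0),(0,Y_2))=0$ and $\partial_t\bar g((X_1,0),(0,Y_2))=0$ both vanish automatically, so it gives no condition. The $M_1$-block yields $-2\,^{M_1}\!\mathrm{Ric}(X_1,Y_1) = -2\,^{M_1}\!\mathrm{Ric}(X_1,Y_1) + \frac{2m_2}{\lambda}\mathrm{Hess}(\lambda)(X_1,Y_1)$, which forces $\mathrm{Hess}(\lambda)=0$, i.e. (\ref{7-2-1'}); here one uses $m_2/\lambda \neq 0$. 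The $M_2$-block yields
\begin{equation*}
2\lambda\frac{\partial\lambda}{\partial t}\,g_2(X_2,Y_2) - 2\lambda^2\,^{M_2}\!\mathrm{Ric}(X_2,Y_2) = -2\,^{M_2}\!\mathrm{Ric}(X_2,Y_2) + \big(2\lambda\Delta_{M_1}\lambda + 2(m_2-1)|\mathrm{grad}\,\lambda|^2\big)g_2(X_2,Y_2).
\end{equation*}

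The remaining issue is that the $M_2$-block equation contains a term $(\lambda^2-1)\,^{M_2}\!\mathrm{Ric}(X_2,Y_2)$ which is a full tensor, not a multiple of $g_2$, whereas the stated characteristic equation (\ref{7-2-1}) is scalar. The resolution — and the step I expect to be the main obstacle to state cleanly — is to take the $g_2$-trace: contracting the $M_2$-block identity over an orthonormal frame $\{\bar e_\alpha\}$ for $g_2$ replaces $\,^{M_2}\!\mathrm{Ric}$ by $\,^{M_2}\!\mathrm{Scal}$ and $g_2(X_2,Y_2)$ by $m_2$, giving $2\lambda m_2\,\partial_t\lambda - 2\lambda^2\,^{M_2}\!\mathrm{Scal} = -2\,^{M_2}\!\mathrm{Scal} + 2m_2\lambda\Delta_{M_1}\lambda + 2m_2(m_2-1)|\mathrm{grad}\,\lambda|^2$; dividing by $2m_2\lambda$ and rearranging produces exactly (\ref{7-2-1}), once one also uses the already-established $\mathrm{Hess}(\lambda)=0$ to rewrite $\Delta_{M_1}\lambda$ — wait, rather, the factor $\big(1+\frac{m_2}{m_1\lambda^2}\big)$ in (\ref{7-2-1}) signals that a further step is needed: since $\mathrm{Hess}(\lambda)=0$ one has $\Delta_{M_1}\lambda = 0$, but more importantly tracing the $M_1$-block Ricci-flow equation over $g_1$ gives a relation between $\partial_t(\log\text{vol})$ and $\,^{M_1}\!\mathrm{Scal}$ that, combined with how $\lambda$ enters, lets one substitute $\Delta_{M_1}\lambda$ in terms of $\frac{m_2}{m_1\lambda^2}\Delta_{M_1}\lambda$-type corrections. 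I would therefore keep the trace computation explicit, carefully track which scalar ($\,^{M_1}\!\mathrm{Scal}$ versus $\,^{M_2}\!\mathrm{Scal}$) appears where, and verify the coefficient $\frac{m_2-1}{\lambda}$ on the gradient term and the right-hand side $\frac{\lambda^2-1}{m_2\lambda}\,^{M_2}\!\mathrm{Scal}$ match after dividing through; the converse direction (that (\ref{7-2-1}) and (\ref{7-2-1'}) suffice) then follows by reading the same block decomposition backwards, noting that (\ref{7-2-1'}) makes the $M_1$-block and mixed block automatic and reduces the $M_2$-block to its trace since $\mathrm{Hess}(\lambda)=0$ and $\Delta_{M_1}\lambda=0$ collapse the tensorial discrepancy.
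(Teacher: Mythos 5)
Your block decomposition, the computation of $\partial_t\bar g$, and the extraction of $\mathrm{Hess}(\lambda)=0$ from the pure $M_1$-block are correct and in fact cleaner than the paper's route (the paper derives (\ref{7-2-1'}) by evaluating its combined identity (\ref{7-2-2}) on off-diagonal pairs of a frame diagonalizing $\,^{M_2}\!\mathrm{Ric}$). The genuine problem is your final paragraph. The $g_2$-trace of your $M_2$-block identity gives
\[
\partial_t\lambda-\Delta_{M_1}\lambda-\frac{m_2-1}{\lambda}\,|\mathrm{grad}\ \lambda|^2
=\frac{\lambda^2-1}{m_2\lambda}\,^{M_2}\!\mathrm{Scal},
\]
i.e.\ (\ref{7-2-1}) with coefficient $1$ rather than $1+\frac{m_2}{m_1\lambda^2}$ on $\Delta_{M_1}\lambda$, and the fix you propose --- a relation between $\partial_t(\log\mathrm{vol})$ and $\,^{M_1}\!\mathrm{Scal}$ from tracing the $M_1$-block --- is not what produces that coefficient and would not work. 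The extra term has a purely bookkeeping origin: the paper keeps the Hessian term inside the single identity (\ref{7-2-2}) and traces that identity over \emph{both} factors, so $\frac{2m_2}{\lambda}\mathrm{Hess}(\lambda)$ contributes $\frac{2m_2^2}{\lambda}\Delta_{M_1}\lambda$, which after dividing by $2m_1m_2\lambda$ is exactly the $\frac{m_2}{m_1\lambda^2}\Delta_{M_1}\lambda$ in (\ref{7-2-1}). Since you have already forced $\mathrm{Hess}(\lambda)=0$, hence $\Delta_{M_1}\lambda=0$, your traced equation is equivalent to (\ref{7-2-1}) in the presence of (\ref{7-2-1'}); you should say exactly that instead of inventing a volume identity.

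A second, shared, gap: your closing claim that $\mathrm{Hess}(\lambda)=0$ and $\Delta_{M_1}\lambda=0$ ``collapse the tensorial discrepancy'' in the $M_2$-block is false. Subtracting $\frac{1}{m_2}$ of the $g_2$-trace times $g_2$ from the $M_2$-block identity leaves
\[
2(1-\lambda^2)\Big(\,^{M_2}\!\mathrm{Ric}-\tfrac{1}{m_2}\,^{M_2}\!\mathrm{Scal}\ g_2\Big)=0,
\]
which does not follow from (\ref{7-2-1}) and (\ref{7-2-1'}) and instead demands that wherever $\lambda\neq 1$ the trace-free Ricci tensor of $g_2$ vanish. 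So the ``if'' direction requires an extra hypothesis (e.g.\ $g_2$ Einstein, as the paper assumes in later sections) or $\lambda\equiv 1$. The paper's own proof has the identical defect --- it asserts that (\ref{7-2-2}) ``is equivalent to'' its double trace --- so this is a flaw of the theorem as stated rather than of your strategy; but your write-up should flag it rather than absorb it into the converse.
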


\begin{proof}
Since $g_i(t), i=1,2$ satisfy
     \begin{equation*}
   \begin{split}
    & \frac{\partial g_1(t)}{\partial t}=-2\,^{M_1}\!\mathrm{Ric}, \quad t\in [0,T_1),\\
    & \frac{\partial g_2(t)}{\partial t}=-2\,^{M_2}\!\mathrm{Ric},  \quad t\in  [0,T_2),
    \end{split}
  \end{equation*}
 we have the derivative of $\bar
g(t)$ with respect to the flow
 parameter $t$
   \begin{equation} \label{5-10-2}
   \begin{split}
   & \frac{\partial}{\partial t}( \bar g(t))\big( (X_1, X_2), (Y_1,Y_2) \big)\\
    &     =\Big( \frac{\partial g_1(t)}{\partial t}\oplus  \big(\lambda^2\frac{\partial g_2(t)}{\partial t}
          +\frac{\partial \lambda^2 }{\partial t} g_2(t)\big) \Big)\big( (X_1, X_2), (Y_1,Y_2) \big)\\
   & =-2\,^{M_1}\!\mathrm{Ric}(X_1,Y_1)-2\lambda^2\,^{M_2}\!\mathrm{Ric}(X_2, Y_2)
      +\frac{\partial \lambda^2 }{\partial t} g_2(t)(X_2,Y_2).
    \end{split}
     \end{equation}
Putting this with (\ref{6-30-6}) together, we can easily show that
$\bar g(x,y,t)$ is a solution to (\ref{9-28}), $\quad t\in [0,\bar
T=\min(T_1,T_2)\ )$ if and only if $\lambda(x,t)$ satisfies
 \begin{equation} \label{7-2-2}
   \begin{split}
   & \frac{\partial \lambda^2(x,t)}{\partial t}g_2(  X_2, Y_2 )
         =2(\lambda^2-1)\,^{M_2}\!\mathrm{Ric}(X_2,Y_2) +2\lambda g_2(X_2,Y_2) \Delta_{M_1} \lambda \\
   &+2\frac{m_2}{\lambda}\mathrm{Hess}(\lambda)(X_1,Y_1)+2(m_2-1)\mid\mathrm{grad}\ \lambda \mid^2 g_2(X_2,Y_2).
    \end{split}
     \end{equation}
On one hand, by the symmetry of $\,^{M_2}\!\mathrm{Ric}$, we can
choose an orthonormal basis $\{\bar e_\alpha\}$ on $M_2$ such that
$\,^{M_2}\!\mathrm{Ric}(\bar e_\alpha,\bar e_\beta)=0$ , $\alpha\neq
\beta$. Thus (\ref{7-2-2}) reduces to
          $$2\frac{m_2}{\lambda}\mathrm{Hess}(\lambda)(X_1,Y_1)=0,$$
which implies (\ref{7-2-1'}).

On the other hand,  by taking trace in both sides of (\ref{7-2-2})
with respect to $g_1$ and
  $g_2$, and noting that $\Delta_{M_1} \lambda=Tr_{g_1}\mathrm{Hess}(\lambda) $,
$\,^{M_2}\!\mathrm{Scal}=Tr_{g_2} \,^{M_2}\!\mathrm{Ric}$, we
conclude that (\ref{7-2-2}) is equivalent to
    \begin{equation} \notag
   \begin{split}
   & 2m_1 m_2 \lambda \frac{\partial \lambda}{\partial t}
         =2m_1(\lambda^2-1)\,^{M_2}\!\mathrm{Scal} +2\lambda m_1 m_2 \Delta_{M_1} \lambda \\
   &+2\frac{m_2^2}{\lambda}\Delta_{M_1} \lambda+2m_1 m_2(m_2-1)\mid\mathrm{grad}\ \lambda
   \mid^2,
    \end{split}
     \end{equation}
which implies (\ref{7-2-1}). Therefore we end the proof.
\end{proof}

\begin{remark}
(1) If $M_1$ is compact, then from (\ref{7-2-1}) and (\ref{7-2-1'})
we immediately see that $\lambda $ is a constant function in term to $M_1$.\\
(2) In Theorem \ref{5-15-0}, we don't stress that $\overline{M}$ is
compact or complete non-compact. Assume that $M_1$ is non-compact
complete manifold and $M_2$ is compact, then $\overline{M}$ is
complete non-compact. At this point we need to add a initial metric
$\bar g_0=(g_1)_0(x)\oplus \lambda^2(x,0)(g_2)_0(y)$ such that
      $\overline{ \mathrm{Riem} }_{\bar g_0} $ has a boundary.
\end{remark}

Now we concern about two questions: 1. Does the PDE (\ref{7-2-1})
have any solution? 2. How many degrees of freedom for the warping
function $\lambda$ are there?

In deed, it is easily seen that (\ref{7-2-1}) doesn't follow from
standard PDE theory. (\ref{7-2-1}) tells us that the terms on its
left-hand side only consist of  the points in the first factor
manifold $M_1$ and flow parameter $t$ whereas those on its
right-hand side consist of the points in the second factor manifold
$M_2$ besides $t$, thus one worries that such complicated nolinear
PDE (\ref{7-2-1}) may have no any solution $\lambda$.

As for degrees of freedom for the warping function $\lambda$, we
first consider the simplest cases: \\
(i) If $\lambda$ is constant, then from (\ref{7-2-1}) and
(\ref{7-2-1'}) we easily observe that $\lambda=\pm 1$. Since
$\lambda$ is positive, thus $\lambda=1$, which implies that
$\overline{M}$ is exactly a direct
product manifold. This is a true. \\
(ii) Assume $M_2$ has constant scalar curvature, then (\ref{7-2-1})
no longer involves the point of $M_2$. This should be a kernel heat
equation, of course it must have solution.

The next two theorems naturally give a guarantee for existence of
solution to (\ref{7-2-1}) as long as there exists a warped product
solution $\bar g(t)$ to the RF. From the short-time existence and
uniqueness result for Ricci flow on a compact manifold \cite{Ha,
De}, we give the corresponding version for WPM.

\begin{theorem} Let $\big( M_1\times M_2,
  (\bar g_0)_{ij\alpha\beta}(x,y):=(g_1)^0_{ij}(x,t)+\lambda^2(x) (g_2)^0_{\alpha\beta}(y,t) \big)$
be a compact Riemannian manifold. Then there exists a constant $\bar
T>0$ such that the initial value problem
  $$\left\{\begin{array}{l}
\frac{\partial}{\partial t}(\bar g_{ij\alpha\beta}(x,y, t))
        =-2 \overline{ \mathrm{Ric}} _{ij\alpha\beta}(x,y,t)\\
  \bar g_{ij\alpha\beta}(x,y,0)= (\bar g_0)_{ij\alpha\beta}(x,y)
  \end{array}\right.
  $$
has a unique smooth solution $\bar
g_{ij\alpha\beta}(x,y,t)=(g_1)_{ij}(x,t)\oplus \lambda^2(x,t)
(g_2)_{\alpha\beta}(y,t)$ on $\overline{M} \times [0, \bar T)$,
where $\overline{ \mathrm{Ric}}
_{ij\alpha\beta}(x,y,t):=\,^{M_1}\!\mathrm{Ric}_{ij}(x,t)+\lambda^2(x,t)
\,^{M_2}\!\mathrm{Ric}_{\alpha\beta}(y,t)$.
\end{theorem}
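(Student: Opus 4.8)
The plan is to deduce the statement directly from the classical short-time existence and uniqueness theorem for the Ricci flow on a \emph{compact} manifold (Hamilton \cite{Ha}, in the parabolic reformulation of DeTurck \cite{De}), applied separately to the two factors. The first observation is that, since $M_1\times M_2$ is compact, each factor $M_i$ is compact, being the continuous image of $\overline M$ under the projection $\pi_i$ (equivalently, diffeomorphic to a leaf, resp. a fiber). Hence $(M_1,(g_1)^0)$ and $(M_2,(g_2)^0)$ each admit a unique smooth Ricci flow, $g_1(x,t)$ and $g_2(y,t)$, on a common interval $[0,\bar T)$ for some $\bar T>0$. The second observation is that, with the block-diagonal tensor $\overline{\mathrm{Ric}}_{ij\alpha\beta}:=\,^{M_1}\!\mathrm{Ric}_{ij}+\lambda^2\,^{M_2}\!\mathrm{Ric}_{\alpha\beta}$ used in the statement, the flow $\partial_t\bar g=-2\overline{\mathrm{Ric}}$ for $\bar g=g_1(x,t)\oplus\lambda^2(x,t)g_2(y,t)$ decouples: reading off the $M_1$-block gives $\partial_t g_1=-2\,^{M_1}\!\mathrm{Ric}$, while reading off the $M_2$-block gives $\lambda^2\partial_t g_2+(\partial_t\lambda^2)g_2=-2\lambda^2\,^{M_2}\!\mathrm{Ric}$, so that imposing the fiber Ricci flow $\partial_t g_2=-2\,^{M_2}\!\mathrm{Ric}$ forces $(\partial_t\lambda^2)g_2=0$, i.e. $\lambda$ is independent of $t$.

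Carrying this out: set $\lambda(x,t):=\lambda(x)$, the prescribed initial warping function, and $\bar g(t):=g_1(x,t)\oplus\lambda^2(x)g_2(y,t)$. By the two observations above, $\bar g(t)$ is smooth on $\overline M\times[0,\bar T)$, solves $\partial_t\bar g_{ij\alpha\beta}=-2\overline{\mathrm{Ric}}_{ij\alpha\beta}$, and satisfies $\bar g(0)=\bar g_0$, which gives existence. For uniqueness, observe that any smooth solution of the initial value problem of the stated warped-product form $(g_1)_{ij}(x,t)\oplus\lambda^2(x,t)(g_2)_{\alpha\beta}(y,t)$ produces, block by block, a solution $g_1(x,t)$ of the Ricci flow on $M_1$ with initial data $(g_1)^0$, a solution $g_2(y,t)$ of the Ricci flow on $M_2$ with initial data $(g_2)^0$, together with $\partial_t\lambda=0$; the Hamilton--DeTurck uniqueness theorem on each compact factor then pins down $g_1$, $g_2$, and hence $\lambda$, completely. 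Shrinking $\bar T$ if necessary, this establishes the theorem.

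The part that requires care --- and the reason the statement is phrased with the block-diagonal $\overline{\mathrm{Ric}}$ rather than the true warped-product Ricci tensor --- is the coupling between the two factors. If one instead asks the stronger question of when $\bar g(t)=g_1(x,t)\oplus\lambda^2(x,t)g_2(y,t)$ solves the genuine Ricci flow with $\overline{\mathrm{Ric}}$ given by the unified formula (\ref{6-30-6}) of Theorem \ref{6-30-4}, then, exactly as in the proof of Theorem \ref{5-15-0}, the warping function must satisfy the coupled system (\ref{7-2-1})--(\ref{7-2-1'}); the pointwise constraint $\mathrm{Hess}(\lambda)=0$ forces $\lambda$ to be spatially constant on the compact base $M_1$, which is incompatible with a freely prescribed initial $\lambda(x)$, and moreover (\ref{7-2-1}) is of no standard parabolic type, since its left-hand side lives on $M_1$ while its right-hand side involves $\,^{M_2}\!\mathrm{Scal}$. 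Thus the genuine obstacle is not short-time existence --- immediate from Hamilton--DeTurck on the factors --- but the propagation in time of a nontrivial, spatially non-constant warped-product structure under the exact Ricci flow; the block-diagonal convention adopted in the statement is precisely what removes this obstacle and reduces the proof to factorwise classical theory.
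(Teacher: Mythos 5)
Your proposal is essentially the argument the paper intends but does not write out: the paper states this theorem with no proof, merely citing the Hamilton--DeTurck short-time existence and uniqueness theorem \cite{Ha, De}, and your factorwise application of that theorem (compactness of each $M_i$, decoupling of the two blocks under the block-diagonal tensor $\overline{\mathrm{Ric}}_{ij\alpha\beta}:=\,^{M_1}\!\mathrm{Ric}_{ij}+\lambda^2\,^{M_2}\!\mathrm{Ric}_{\alpha\beta}$) is the natural way to fill that in. Your closing observation --- that the statement is only tenable because $\overline{\mathrm{Ric}}$ here is \emph{not} the genuine Ricci tensor of $\bar g$ from (\ref{6-30-6}), and that with the genuine tensor one runs into the constraints (\ref{7-2-1})--(\ref{7-2-1'}) of Theorem \ref{5-15-0}, which on a compact base force $\lambda$ to be constant --- is a clarification the paper itself never makes, and it is the most valuable part of your write-up.

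One step deserves more care: the uniqueness argument. You assert that a solution of the stated form ``produces, block by block, a solution $g_2(y,t)$ of the Ricci flow on $M_2$ \ldots together with $\partial_t\lambda=0$,'' but this is not forced by the equation. The fiber block only gives
\begin{equation*}
\lambda^{2}\,\partial_t g_2+(\partial_t\lambda^{2})\,g_2=-2\lambda^{2}\,^{M_2}\!\mathrm{Ric},
\end{equation*}
and dividing by $\lambda^2$ shows only that $\partial_t\log\lambda^2$ must be a function $c(t)$ of $t$ alone, with $g_2$ then solving the modified flow $\partial_t g_2=-2\,^{M_2}\!\mathrm{Ric}-c(t)g_2$. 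This reflects the gauge ambiguity $\lambda^2\mapsto c(t)\lambda^2$, $g_2\mapsto g_2/c(t)$ in the factorization of the fiber metric $\lambda^2 g_2$: the pair $(\lambda,g_2)$ is not determined by $\bar g$, so you cannot invoke uniqueness of the unnormalized Ricci flow for $g_2$ directly. To close the gap, either fix the gauge (e.g.\ normalize $\lambda(x_0,t)=\lambda(x_0,0)$ at a base point, or require $g_2(y,t)$ to solve the unnormalized flow as part of the definition of ``solution of warped-product form''), or argue at the level of the product $h:=\lambda^2 g_2$ itself, showing that all admissible gauges yield the same $\bar g$. With that normalization in place, your argument goes through and matches what the paper's citation of \cite{Ha, De} is meant to deliver.
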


On a non-compact complete manifold $\overline{M}$, we only require
the short-time existence established by Shi \cite{Sh}. The following
result is modified to the warped product case according the version
of Shi.

\begin{theorem} \label{10-1-1} Let $\big( M_1\times M_2 , \bar g_0(x, y)=(g_1)^0(x) \oplus
\lambda_0^2(x) (g_2)^0(y) \big)$ be a complete noncompact Riemannian
manifold of dimension $m_1+m_2$ with bounded curvature. Then there
exists a constant $\bar T>0$ such that the initial value problem
 $$\left\{\begin{array}{l}
 \frac{\partial}{\partial t}(\bar g_{ij\alpha\beta}(x,y, t))
        =-2 \overline{ \mathrm{Ric}} _{ij\alpha\beta}(x,y,t),\\
  \bar g_{ij\alpha\beta}(x,y,0)=(g_1)^0_{ij}(x)\oplus
  \lambda^2_0(x)(g_2)^0_{\alpha\beta}(y)
  \end{array}\right.
  $$
has a smooth solution $\bar
g_{ij\alpha\beta}(x,y,t)=(g_1)_{ij}(x,t)\oplus \lambda^2(x,t)
(g_2)_{\alpha\beta}(y,t)$ on $\overline{M} \times [0, \bar T]$ with
uniformly bounded curvature.
\end{theorem}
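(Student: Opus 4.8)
The plan is to obtain this from Shi's short-time existence theorem \cite{Sh}, the characterization of warped-product Ricci flows in Theorem~\ref{5-15-0}, and a uniqueness argument. First I would note that $(\overline M,\bar g_0)$ is, by hypothesis, a complete noncompact manifold of dimension $m_1+m_2$ with bounded curvature, so Shi's theorem applies directly and yields a constant $\bar T>0$ and a smooth solution $\bar g(t)$, $t\in[0,\bar T]$, of $\partial_t\bar g=-2\overline{\mathrm{Ric}}$ with $\bar g(0)=\bar g_0$ and uniformly bounded curvature on $\overline M\times[0,\bar T]$. The substantive point is then that this solution keeps the warped-product form $g_1(x,t)\oplus\lambda^2(x,t)g_2(y,t)$.

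To see this I would build a warped-product solution by hand and invoke uniqueness. The unified curvature formula (\ref{6-30-5}) shows that for purely horizontal, resp. purely vertical, arguments $\overline{\mathrm{Rm}}$ reduces to $\,^{M_1}\!\mathrm{Rm}$, resp. to $\lambda^2\,^{M_2}\!\mathrm{Rm}$ plus lower-order terms, so the curvature bound on $\overline M$ (together with the bounds on $\lambda_0,\ \mathrm{grad}\,\lambda_0,\ \mathrm{Hess}(\lambda_0)$ and $\inf\lambda_0>0$) forces $(M_1,(g_1)^0)$ and $(M_2,(g_2)^0)$ to be complete with bounded curvature. Applying Shi's theorem to each factor gives Ricci flows $g_1(x,t)$ and $g_2(y,t)$ of bounded curvature on a common interval $[0,T_0]$, $T_0\le\bar T$. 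It then remains to solve the characteristic system (\ref{7-2-1})--(\ref{7-2-1'}) for $\lambda(x,t)$ on $M_1$ with $\lambda(x,0)=\lambda_0(x)$: its coefficients are built from the uniformly controlled metric $g_1(t)$ and from the scalar data $\,^{M_2}\!\mathrm{Scal}$ of $g_2(t)$, so it is a quasilinear parabolic problem on a manifold of bounded geometry, solvable for a short time by the same Shi-type a priori estimates. Once $\lambda$ is produced, Theorem~\ref{5-15-0} guarantees that $\tilde g(t):=g_1(x,t)\oplus\lambda^2(x,t)g_2(y,t)$ is a genuine solution of (\ref{9-28}) with $\tilde g(0)=\bar g_0$, and (\ref{6-30-5}) gives it bounded curvature on a possibly shorter interval.

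To finish, both $\bar g(t)$ and $\tilde g(t)$ are complete Ricci flows with the same initial metric $\bar g_0$ and uniformly bounded curvature on a common time interval, so by the uniqueness theorem for complete bounded-curvature Ricci flow they coincide. Hence Shi's solution is the warped product $g_1(x,t)\oplus\lambda^2(x,t)g_2(y,t)$, it has uniformly bounded curvature, and it solves the stated initial value problem; relabeling the (smaller) existence time as $\bar T$ completes the argument.

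The step I expect to be the main obstacle is solving (\ref{7-2-1})--(\ref{7-2-1'}) for $\lambda$. In contrast to the compact case this is not covered by classical parabolic theory on a closed manifold, so Shi's bounded-geometry estimates must be ported to this nonlinear equation on the noncompact base; and, more seriously, (\ref{7-2-1'}) is a rigid overdetermined constraint while the right-hand side of (\ref{7-2-1}) carries the $y$-dependent term $\,^{M_2}\!\mathrm{Scal}$ whereas its left-hand side depends only on $x$ and $t$. Reconciling these essentially forces $\lambda$ to be independent of the base point, so that (\ref{7-2-1}) degenerates to an ODE in $t$, unless $(M_2,(g_2)^0)$ has constant scalar curvature, a condition preserved by its own Ricci flow; once this compatibility is in force the construction of $\lambda$, and hence the whole argument, goes through.
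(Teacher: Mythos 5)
First, a point of comparison: the paper offers no proof of this theorem at all --- it is stated as a direct ``warped-product version'' of Shi's theorem \cite{Sh} --- so the only part of your argument the paper actually covers is your first step (Shi gives \emph{some} complete bounded-curvature solution $\bar g(t)$ with $\bar g(0)=\bar g_0$). Everything after that, namely showing the solution stays in warped-product form, is the real content of the statement, and it is exactly where your argument (as you yourself flag) does not close. The specific gap is that you try to manufacture the comparison solution $\tilde g(t)$ through Theorem~\ref{5-15-0}, i.e.\ by the ansatz ``each factor flows by its own Ricci flow and $\lambda$ solves (\ref{7-2-1})--(\ref{7-2-1'})''. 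That ansatz is both generically unsolvable and the wrong one: (\ref{7-2-1'}) forces $\mathrm{Hess}(\lambda)=0$, the right-hand side of (\ref{7-2-1}) depends on $y$ through $\,^{M_2}\!\mathrm{Scal}$ while the left-hand side does not, and --- more fundamentally --- a warped-product solution of (\ref{9-28}) does \emph{not} have $g_1(t)$ solving the Ricci flow of $M_1$: by Proposition~\ref{10-5-01} it satisfies $\frac{\partial}{\partial t}(g_1)_{ij}=-2\,^{M_1}\!\mathrm{Ric}_{ij}+\frac{2m_2}{\lambda}\mathrm{Hess}(\lambda)(\partial_i,\partial_j)$, a system coupled to $\lambda$. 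The correct route would be to solve the coupled system (\ref{10-5-02})--(\ref{10-5-03}) for $(g_1,\lambda)$ rather than (\ref{7-2-1})--(\ref{7-2-1'}); and even that system is consistent only when $\,^{M_2}\!\mathrm{Ric}$ is pointwise proportional to $g_2$, i.e.\ when $(M_2,(g_2)^0)$ is Einstein --- precisely the hypothesis Simon \cite{Si} needs and which the theorem as stated omits.

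Second, your uniqueness step quietly invokes the uniqueness theorem for complete bounded-curvature Ricci flows (Chen--Zhu), which is not part of Shi's paper and is nowhere cited here; without it, even producing a warped-product solution would not show that \emph{Shi's} solution is that warped product. And your claim that bounded curvature of $\bar g_0$ forces bounded curvature of each factor needs the extra hypotheses $\inf\lambda_0>0$ together with bounds on $\mathrm{grad}\,\lambda_0$ and $\mathrm{Hess}(\lambda_0)$, which the theorem does not assume. In short: your first paragraph reproduces all the paper actually justifies; the structure-preservation claim is a genuine gap in both your argument and the paper's, and it can only be repaired by adding hypotheses (Einstein fiber, controlled $\lambda_0$, a uniqueness theorem for the class of solutions considered) along the lines of \cite{Si}.
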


Now we construct a relatively simple example.
\begin{example} \label{9-30-2} Let $M_1=\mathbb R$ with flat metric $g_1=h(x)=\mu^2(x) dx^2$
( $\mu(x)$ is a smooth positive function ) and $M_2=S^{n}$ $(n \geq
2)$ with the standard metric which implies $M_2$ admits an Einstein
metric $g_2=\lambda^2(x)g_{S^n}$. By the main result in \cite{Si},
under some constraints for initial values, there exists warping
functions $\lambda(x,t)$ and a maximal constant $T$ such that warped
product solution
  $$\bar g(x,y)=h(x,t)\oplus \lambda^2(x,t)g_{S^n}(y),\quad t\in [0,T)$$
  to the RF (\ref{9-28}). Of course, we don't write $\lambda(x,t)$ as explicit form.
  On $\overline M=\mathbb R\times S^n$, the warped product metric
  $\bar g=\mu^2(x) dx^2\oplus \lambda^2(x)g_{S^n}$ can be read as
      $$\bar g(s,y)=ds^2 \oplus \lambda^2(s)g_{ S^{n} }(y),$$
where $s=\int_0^x \mu(x) dx$ is the arc-length parameter. Then the
sectional curvatures of planes containing or perpendicular to the
radical vector $\frac{\partial }{\partial s}=\frac{1}{\mu(x)}
\frac{\partial}{\partial x}$ are respectively ( cf. Chap.3 in
\cite{Pe}, or \cite{AK,MX} )
   $$ K_{rad}=-\frac{\lambda_{ss}}{\lambda},\quad  K_{sph}=\frac{1-\lambda_s^2}{\lambda^2},$$
and the Ricci tensor is
    \begin{equation}\label{10-2-01}
     \begin{split}
      \overline{\mathrm{Ric}}&=-n\frac{\mu
      \lambda_{xx}-\lambda_x\mu_x}{\lambda\mu}dx^2
       \oplus \left(
       -\frac{\lambda\mu\lambda_{xx}+(n-1)\mu\lambda^2_x-\lambda\lambda_x\mu_x}{\mu^3}+n-1\right)g_{S^n}\\
      & =-n\frac{\lambda_{ss}}{\lambda}ds^2
         \oplus \bigg( (n-1)\big(1-\lambda_s^2 \big)-\lambda\lambda_{ss}\bigg)g_{S^n}\\
          &=n K_{rad}ds^2\oplus \left( K_{rad}+(n-1)K_{sph}\right)\lambda^2(s)g_{ S^{n}  }.
          \end{split}
    \end{equation}
Since $dx^2$ and $g_{S^n}$ are independent of $t$, a direct
computation gives
       \begin{equation}\label{10-2-02}
     \begin{split}
     &\frac{\partial }{\partial t}\big( \mu^2(x,t) dx^2\oplus \lambda^2(x,t)g_{S^n}(y) \big)\\
     &= 2\mu\mu_tdx^2 \oplus 2\lambda\lambda_t g_{S^n}\\
     &=2\frac{\mu_t}{\mu}ds^2\oplus 2\lambda\lambda_t g_{S^n}.
      \end{split}
    \end{equation}
 Hence if the warped product metrics
$\bar g(x,y,t)=\mu^2(x,t) dx^2\oplus \lambda^2(x,t)g_{S^n}(y)$ is a
solution to the Ricci flow (\ref{9-28}), then substituting
(\ref{10-2-01}) and (\ref{10-2-02}) into (\ref{9-28}) immediately
yields
     \begin{equation} \label{9-30-0}
     \left\{  \begin{array}{l}
    \frac{ \lambda_t}{\lambda}
        =-\big(K_{rad}+(n-1)K_{sph}\big),\\
     \frac{\mu_t}{\mu}=-nK_{rad},
    \end{array} \right.
    \end{equation}
which happens to be
    \begin{equation} \label{9-30}
     \left\{  \begin{array}{l}
    \frac{\partial \log \lambda}{\partial t}
        =-\big(K_{rad}+(n-1)K_{sph}\big),\\
     \frac{\partial \log\mu}{\partial t}=-nK_{rad}.
    \end{array} \right.
    \end{equation}
 Since the sectional curvature functions $K_{rad}$ and
$K_{sph}$ are uniform bound ( see the proof of Theorem 1.2 in
\cite{MX} ), we integrate (\ref{9-30}) over the time interval
$[0,t],t<T$ and get the  functions
   \begin{equation*}
    \left\{  \begin{array}{l}
   \lambda(x,t)=\lambda(x,0)e^{-\int_0^t\big(K_{rad}+(n-1)K_{sph}\big)dt},\\
   \mu(x,t)=\mu(x,0) e^{-n\int_0^tK_{rad} dt}.
   \end{array} \right.
    \end{equation*}
\end{example}

\section{The behavior of warping function under HGF}

 We now investigate the behavior of warping function under the
 hyperbolic geometric flow.

 Recall that Kong and Liu \cite{KL} introduced a geometric flow called
hyperbolic geometric flow (HGF) whose definition is as follows.

\begin{definition} \label{5-24-0} Let $M$ be a Riemannian manifold. The hyperbolic geometric flow (HGF) is the
evolution equation
  \begin{equation} \label{5-24-1}
  \frac{\partial^2}{\partial t^2}g(t)=-2Ric
  \end{equation}
 for a one-parameter family of Riemannian metrics $g(t),\ t\in [0,T)$ on $M$. We say that $g(t)$ is a
  solution to the hyperbolic geometric flow if it satisfies (\ref{5-24-1}).
\end{definition}

When $M$ is changed to our warped product manifold $\overline{M}$,
the corresponding HGF is
     \begin{equation} \label{10-1}
  \frac{\partial^2}{\partial t^2}\bar g(t)=-2 \overline{\mathrm{Ric} }.
  \end{equation}
In this case, similar to Theorem \ref{5-15-0} we have

\begin{theorem} \label{5-24-2} Suppose that Riemannian manifold $(M_1,g)$ is compact (or complete
non-compact) and $M_2$ is compact. If $(M_1,g_1(t))$ and $(M_2,
g_2(t)$ are the solution to the HGF on a common time interval $I$,
respectively, then the warped product metric $\bar
g(x,y,t)=g_1(x,t)\oplus \lambda^2(x,t)g_2(y,t)$ is a solution to the
HGF (\ref{10-1}) if
 and only if the warped product function $\lambda=\lambda(x,t),t\in I$
 satisfies
   \begin{equation} \label{7-2-6}
   \begin{split}
 & \frac{ m_2}{2} \frac{\partial^2 \lambda^2}{\partial t^2}
  -\frac{(\lambda^2+m_2)m_2}{\lambda} \Delta_{M_1} \lambda
   -m_2(m_2-1)\mid\mathrm{grad}\ \lambda \mid^2\\
  &=(\lambda^2-1)\,^{M_2}\!\mathrm{Scal}
   -Tr_{g_2}\big(\frac{\partial g_2}{\partial t}\big) \frac{\partial \lambda^2}{\partial t}
 \end{split}
   \end{equation}
 and
     \begin{equation} \label{7-2-6'}
   m_1\frac{\partial \lambda^2 }{\partial t}\frac{\partial g_2(t)}{\partial
    t}(\bar e_\alpha, \bar e_\beta)+  \frac{m_2}{\lambda}
    \Delta_{M_1} \lambda=0, \quad \alpha\neq \beta,
   \end{equation}
 where $\{\bar e_{\alpha} \}$ is an orthonormal basis on $M_2$ such
 that $\,^{M_2}\!\mathrm{Ric}(\bar e_\alpha, \bar e_\beta) =0$.
 \end{theorem}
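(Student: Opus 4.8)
The plan is to mimic the proof of Theorem~\ref{5-15-0} almost verbatim, replacing the first time-derivative by the second one. First I would compute $\frac{\partial^2}{\partial t^2}\bar g(t)$ acting on $\big((X_1,X_2),(Y_1,Y_2)\big)$. Since $\bar g=g_1\oplus\lambda^2 g_2$ and $g_1(t),g_2(t)$ satisfy $\frac{\partial^2}{\partial t^2}g_i=-2\,^{M_i}\!\mathrm{Ric}$, the Leibniz rule gives
\begin{equation*}
\frac{\partial^2}{\partial t^2}\bar g = -2\,^{M_1}\!\mathrm{Ric}(X_1,Y_1)
  + \Big(\lambda^2\frac{\partial^2 g_2}{\partial t^2}
  + 2\frac{\partial\lambda^2}{\partial t}\frac{\partial g_2}{\partial t}
  + \frac{\partial^2\lambda^2}{\partial t^2}g_2\Big)(X_2,Y_2),
\end{equation*}
and the $\lambda^2\frac{\partial^2 g_2}{\partial t^2}$ term becomes $-2\lambda^2\,^{M_2}\!\mathrm{Ric}(X_2,Y_2)$. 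Substituting this together with the unified Ricci formula \eqref{6-30-6} into the HGF equation \eqref{10-1}, the $-2\,^{M_1}\!\mathrm{Ric}$ terms cancel (since $g_1$ already solves its own HGF), and one is left with a tensorial identity on the $M_2$-directions that must hold for all $(X_1,X_2),(Y_1,Y_2)$.

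The resulting identity, after clearing the factor $-2$, should read schematically
\begin{equation*}
\Big(\frac{\partial^2\lambda^2}{\partial t^2}
 + 2\frac{\partial\lambda^2}{\partial t}\,\mathrm{Tr}_{g_2}\!\big(\tfrac{\partial g_2}{\partial t}\big)\cdot(\cdots)\Big)g_2(X_2,Y_2)
 = 2(\lambda^2-1)\,^{M_2}\!\mathrm{Ric}(X_2,Y_2)
 + 2\lambda g_2(X_2,Y_2)\Delta_{M_1}\lambda
 + \tfrac{2m_2}{\lambda}\mathrm{Hess}(\lambda)(X_1,Y_1)
 + 2(m_2-1)|\mathrm{grad}\,\lambda|^2 g_2(X_2,Y_2)
\end{equation*}
(the exact placement of the mixed term $2\frac{\partial\lambda^2}{\partial t}\frac{\partial g_2}{\partial t}$ is what produces the off-diagonal condition). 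Exactly as in Theorem~\ref{5-15-0}, I would then split this into two pieces. Choosing an orthonormal basis $\{\bar e_\alpha\}$ of $M_2$ diagonalizing $\,^{M_2}\!\mathrm{Ric}$ and testing on $\bar e_\alpha,\bar e_\beta$ with $\alpha\neq\beta$ kills both the Ricci term and the $g_2$-diagonal terms, leaving precisely the off-diagonal constraint \eqref{7-2-6'} relating $\frac{\partial\lambda^2}{\partial t}\frac{\partial g_2}{\partial t}(\bar e_\alpha,\bar e_\beta)$ and $\frac{m_2}{\lambda}\Delta_{M_1}\lambda$. Then, to obtain \eqref{7-2-6}, I would take the trace of the full identity over $M_1$ and over $M_2$: the trace over $g_1$ turns $\mathrm{Hess}(\lambda)$ into $\Delta_{M_1}\lambda$ (with a factor $m_1$ on the purely-$M_2$ terms), and the trace over $g_2$ turns $\,^{M_2}\!\mathrm{Ric}$ into $\,^{M_2}\!\mathrm{Scal}$ and $g_2(X_2,Y_2)$ into $m_2$; collecting terms and dividing by the common numerical factor yields \eqref{7-2-6}. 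Conversely, one checks that \eqref{7-2-6} together with \eqref{7-2-6'} is enough to recover the full tensor identity, hence the HGF equation.

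The main obstacle, compared with the Ricci-flow case, is the presence of the genuinely new cross term $2\frac{\partial\lambda^2}{\partial t}\frac{\partial g_2}{\partial t}$, which is \emph{not} proportional to $g_2$ (it is a symmetric $2$-tensor on $M_2$ with nontrivial off-diagonal part in a generic frame), so the clean reduction used for RF—where the only non-$g_2$ term on the left was absent—does not go through directly. This is exactly why the statement must carry the separate off-diagonal hypothesis \eqref{7-2-6'} and why the trace identity \eqref{7-2-6} retains the explicit $\mathrm{Tr}_{g_2}\!\big(\frac{\partial g_2}{\partial t}\big)\frac{\partial\lambda^2}{\partial t}$ term on its right-hand side. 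Care is also needed because $\frac{\partial g_2}{\partial t}$ at a fixed time need not be diagonal in the frame diagonalizing $\,^{M_2}\!\mathrm{Ric}$; I would keep the two tensor equations (diagonal/trace part and off-diagonal part) separate rather than trying to combine them, which is the cleanest way to arrive at the stated pair \eqref{7-2-6}--\eqref{7-2-6'}. Apart from that, every step is the same bookkeeping of traces as in the proof of Theorem~\ref{5-15-0}.
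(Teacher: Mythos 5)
Your proposal follows essentially the same route as the paper's proof: compute $\tfrac{\partial^2}{\partial t^2}\bar g$ by the Leibniz rule, substitute the unified Ricci formula (\ref{6-30-6}) into the HGF equation to obtain a single tensor identity, then extract (\ref{7-2-6'}) from the off-diagonal components in a frame diagonalizing $\,^{M_2}\!\mathrm{Ric}$ and (\ref{7-2-6}) by tracing over $g_1$ and $g_2$. Your added remarks about the cross term $2\tfrac{\partial\lambda^2}{\partial t}\tfrac{\partial g_2}{\partial t}$ not being proportional to $g_2$ correctly identify why the extra condition appears, but the bookkeeping is the same as in the paper.
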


\begin{proof} Since $g_i(t), i=1,2$ satisfy
     \begin{equation*}
   \begin{split}
    & \frac{\partial^2 g_1(t)}{\partial t^2}=-2\,^{M_1}\!\mathrm{Ric},\\
    & \frac{\partial^2 g_2(t)}{\partial
    t^2}=-2\,^{M_2}\!\mathrm{Ric},\quad t\in I,
    \end{split}
  \end{equation*}
 we have
   \begin{equation} \notag
   \begin{split}
   & \frac{\partial^2\bar g(t)}{\partial t^2}\big( (X_1, X_2), (Y_1,Y_2) \big)\\
    &     = \frac{\partial^2 g_1(t)}{\partial t^2}(X_1,Y_1)
       +\lambda^2\frac{\partial^2 g_2(t)}{\partial t^2}(X_2,Y_2)\\
     &+2\frac{\partial \lambda^2 }{\partial t}\frac{\partial g_2(t)}{\partial t}(X_2,Y_2)
      + \frac{\partial^2 \lambda^2 }{\partial t^2} g_2(t)(X_2,Y_2)\\
   & =-2\,^{M_1}\!\mathrm{Ric}(X_1,Y_1)-2\lambda^2\,^{M_2}\!\mathrm{Ric}(X_2, Y_2)\\
  &+2\frac{\partial \lambda^2 }{\partial t}\frac{\partial g_2(t)}{\partial t}(X_2,Y_2)
      + \frac{\partial^2 \lambda^2 }{\partial t^2} g_2(t)(X_2,Y_2).\\
    \end{split}
     \end{equation}
Combining this and (\ref{6-30-6}), we easily see that $\bar
g(x,y,t)$ is the solution to (\ref{10-1}) if and only if
$\lambda=\lambda(x,y,t)$ satisfies
   \begin{equation} \label{7-2-7}
   \begin{split}
  & \frac{\partial^2 \lambda^2 }{\partial t^2} g_2(t)(X_2,Y_2)
    +2\frac{\partial \lambda^2 }{\partial t}\frac{\partial g_2(t)}{\partial
    t}(X_2,Y_2)\\
  & =(2\lambda^2-2)\,^{M_2}\!\mathrm{Ric}(X_2, Y_2)+2\lambda \Delta_{M_1} \lambda g_2(X_2,Y_2)\\
  & +\frac{2m_2}{\lambda} \mathrm{Hess}(\lambda)(X_1,Y_1)
    +2(m_2-1)\mid\mathrm{grad}\lambda \mid^2 g_2(X_2,Y_2).
    \end{split}
     \end{equation}
After we choose an orthonormal basis $\{\bar e_\alpha\}$ on $M_2$
such that $\,^{M_2}\!\mathrm{Ric}(\bar e_\alpha,\bar e_\beta)=0$ ,
$\alpha\neq \beta$,  (\ref{7-2-7}) reduces to
         \begin{equation} \label{7-2-7'}
   \frac{\partial \lambda^2 }{\partial t}\frac{\partial g_2(t)}{\partial
    t}(\bar e_\alpha,\bar e_\beta) =\frac{m_2}{\lambda} \mathrm{Hess}(\lambda)(X_1,Y_1).
         \end{equation}
Further trace it with respect to $g_1$, we get
      $$m_1\frac{\partial \lambda^2 }{\partial t}\frac{\partial g_2(t)}{\partial
    t}(\bar e_\alpha,\bar e_\beta)=\frac{m_2}{\lambda}\Delta_{M_1} \lambda,\quad \alpha\neq \beta,$$
 which is just (\ref{7-2-6'}).

On the other hand,  by taking trace in both sides of (\ref{7-2-7})
with respect to $g_1$ and $g_2$, we can  reduce (\ref{7-2-7}) to
    \begin{equation} \notag
   \begin{split}
 &  m_1 m_2  \frac{\partial^2 \lambda^2}{\partial t^2}
         +2m_1 \frac{\partial \lambda^2 }{\partial t} Tr_{g_2} \big(\frac{\partial g_2(t)}{\partial
    t}\big)\\
&=2m_1 (\lambda^2-1)\,^{M_2}\!\mathrm{Scal} +2\lambda m_1 m_2 \Delta_{M_1} \lambda \\
   &+\frac{2m_1m_2^2}{\lambda}\Delta_{M_1} \lambda+2m_1 m_2(m_2-1)\mid\mathrm{grad}\
   \lambda\mid^2,
    \end{split}
     \end{equation}
which implies (\ref{7-2-6}).
\end{proof}

Obviously, the equation (\ref{7-2-6}) is analogous to the previous
equation (\ref{7-2-1}) but much more complicated, manifested chiefly
by the second-order derivative term $ \frac{\partial^2
\lambda^2}{\partial t^2}$ and the extra term
 $Tr_{g_2}\big(\frac{\partial g_2(t)}{\partial t}\big)$
 without carrying given information. Therefore one may worry about the equation (\ref{7-2-6})
 has no any solution and makes no any sense. This need not worry, because the short-time existence
 result for HGF on a compact manifold (see Theorem 1.1 in \cite{DKL}) can provide us
an evidence. We give its version related to WPM as follows.

\begin{theorem} \label{10-1-2} Let $\big(M_1\times M_2 , \bar g^0(x, y)=(g_1)^0(x) \oplus
\lambda_0^2(x) (g_2)^0(y) \big)$ be a compact Riemannian manifold.
Then there exists a constant $\bar T>0$ such that the initial value
problem
  $$\left\{\begin{array}{l}
    \frac{\partial^2}{\partial t^2}(\bar g_{ij\alpha\beta}(x,y, t))
         =-2 \overline{ \mathrm{Ric}} _{ij\alpha\beta}(x,y,t)\\
  \bar g_{ij\alpha\beta}(x,y,0)=\bar g^0_{ij\alpha\beta}(x,y),\quad
   \frac{\partial}{\partial t} \bar g_{ij\alpha\beta}(x,y,0)=h^0_{ij\alpha\beta}(x,y)
  \end{array}\right.
  $$
has a unique smooth solution
 $\bar g_{ij\alpha\beta}(x,y,t)=(g_1)_{ij}(x,t)\oplus \lambda^2 (g_2)_{\alpha\beta}(y,t)$
 on $\overline M \times [0, \bar T]$, where $h^0_{ij\alpha\beta}(x,y)$ is a symmetric tensor on
$M_1\times M_2$.
\end{theorem}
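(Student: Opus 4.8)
The plan is to obtain the theorem as the warped--product specialization of the general short--time existence and uniqueness result for the HGF on a compact manifold, namely Theorem 1.1 of \cite{DKL}, and then to verify that the warped--product form is propagated by the flow. Set $\overline M=M_1\times M_2$ and view \eqref{10-1} as a second--order system in $\bar g$; its spatial operator $-2\overline{\mathrm{Ric}}$ has a degenerate principal symbol because of diffeomorphism invariance, so a hyperbolic analogue of DeTurck's trick (along the lines of the proof of \cite{DKL}, cf. also \cite{KL}) is needed to pass to a strictly hyperbolic quasilinear system.

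First I would fix a product background metric $(g_1)^0\oplus(g_2)^0$ on $\overline M$ --- chosen precisely so that the associated connection, and hence the DeTurck vector field $W=W(\bar g)$ it generates, respects the factorization of $\overline M$ --- and replace \eqref{10-1} by the modified flow $\partial_t^2\bar g=-2\overline{\mathrm{Ric}}+\mathcal L_W\bar g$. This system is quasilinear and strictly hyperbolic, so the existence theory for such systems used in \cite{DKL} yields, for the Cauchy data $(\bar g^0,h^0)$, a unique smooth solution on a maximal interval $[0,\bar T]$; pulling back by the flow generated by $W$ then recovers the unique smooth solution $\bar g(x,y,t)$ of \eqref{10-1}. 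This is exactly \cite{DKL}, Theorem 1.1, applied to $\overline M$, and it already supplies the existence, uniqueness, smoothness, and the common existence time $\bar T$ asserted in the statement.

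Second I would show that this solution is automatically warped. Substituting the ansatz $\bar g_{ij\alpha\beta}(x,y,t)=(g_1)_{ij}(x,t)\oplus\lambda^2(x,t)(g_2)_{\alpha\beta}(y,t)$ into the DeTurck--modified system and using the unified Ricci identity \eqref{6-30-6} --- this is the computation already performed in the proof of Theorem \ref{5-24-2} --- the system decouples into the HGF for $g_1(x,t)$ on $M_1$, the HGF for $g_2(y,t)$ on $M_2$, and the scalar characteristic equations \eqref{7-2-6}--\eqref{7-2-6'} for $\lambda(x,t)$, the latter again being quasilinear hyperbolic in $\lambda$. Treating $(g_1,g_2,\lambda)$ as one coupled hyperbolic system (the coupling running through the term $Tr_{g_2}(\partial_t g_2)\,\partial_t\lambda^2$ in \eqref{7-2-6}) and solving it with the initial data read off from $\bar g^0$ and $h^0$ produces a warped--product solution of \eqref{10-1} with Cauchy data $(\bar g^0,h^0)$; by the uniqueness established in the first step it must coincide with $\bar g(x,y,t)$, which is therefore of the claimed form, and smoothness together with the time $\bar T$ are inherited from the uniform estimates in the hyperbolic existence theorem.

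The step I expect to be the main obstacle is the compatibility between the product splitting on the one hand and the DeTurck gauge and Cauchy data on the other: one must check that $W$ can genuinely be arranged tangent to the factorization (whence the product background metric), and that the reduced coupled system for $(g_1,g_2,\lambda)$ is well posed, which in turn should follow from the strict hyperbolicity of the full DeTurck--HGF on $\overline M$. Implicitly this also forces the initial velocity $h^0$ to be of warped--product--compatible shape, of the form $(h_1)_{ij}(x)\oplus\big(\mu^2(x)(g_2)^0_{\alpha\beta}(y)+\lambda_0^2(x)(h_2)_{\alpha\beta}(y)\big)$ for suitable data on the two factors; without this restriction the solution still exists by \cite{DKL} but need not be a warped product.
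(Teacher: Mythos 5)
You should first be aware that the paper contains no proof of this statement: Theorem \ref{10-1-2} is introduced only as a ``version related to WPM'' of Theorem 1.1 in \cite{DKL} and is left entirely unproved. Your first step --- the hyperbolic DeTurck reduction and the appeal to the short-time existence and uniqueness theorem of \cite{DKL} on the compact manifold $\overline M=M_1\times M_2$ --- therefore already supplies everything the paper itself offers, namely a unique smooth solution $\bar g(x,y,t)$ with the prescribed Cauchy data.

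The genuine gap is in your second step, the propagation of the warped-product form. You assert that substituting the ansatz decouples the flow into the HGF on $M_1$, the HGF on $M_2$, and a ``quasilinear hyperbolic'' scalar equation for $\lambda$, and that this coupled system is well posed. But what the ansatz must actually satisfy is the full symmetric $2$-tensor identity \eqref{7-2-7}, which is an overdetermined system for the single scalar unknown $\lambda(x,t)$: Theorem \ref{5-24-2} extracts from it only the traced consequences \eqref{7-2-6}--\eqref{7-2-6'}, and the parallel Ricci-flow computation in Theorem \ref{5-15-0} already forces $\mathrm{Hess}(\lambda)=0$, hence $\lambda\equiv\mathrm{const}$ on a compact $M_1$. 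So for generic warped-product initial data the reduced system has no solution at all, and your own uniqueness argument then shows the opposite of what you want: the genuine HGF solution exists (by \cite{DKL}) but fails to remain a warped product. Your closing observation that $h^0$ must be of warped-product-compatible shape is correct but does not go far enough --- even with compatible $h^0$ one needs $(M_2,g_2)$ Einstein, so that the $M_2$-tensorial part of \eqref{7-2-7} collapses to a scalar equation, and a further condition eliminating the trace-free part of $\mathrm{Hess}(\lambda)$, before the reduced system for $(g_1,g_2,\lambda)$ becomes determined. As stated, with $h^0$ an arbitrary symmetric tensor and $(M_2,(g_2)^0)$ arbitrary, the theorem claims more than either \cite{DKL} or your argument can deliver; what is actually provable is existence and uniqueness of a smooth solution together with a conditional statement that it stays warped only under these additional structural hypotheses.
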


In non-compact complete manifold $\overline{M}$, framing Theorem
\ref{10-1-1} and Theorem \ref{10-1-2} and combining Theorem 3.1 in
\cite{Si} (see Introduction section), we present an analogous result
ro Theorem \ref{10-1-1} without proof.

\begin{proposition}  Let
$\big(M_1\times M_2, \bar g_0(x, y)=(g_1)^0(x) \oplus \lambda_0^2(x)
(g_2)^0(y) \big)$ and $\big( M_1\times M_2, \bar h_0(x, y) \big)$ be
complete noncompact Riemannian manifolds of dimension $m_1+m_2$ with
bounded curvature and $\lambda_0$ be imposed certain constrains.
Then there exists a constant $\bar T>0$ such that the initial value
problem
 $$\left\{\begin{array}{l}
 \frac{\partial}{\partial t}(\bar g_{ij\alpha\beta}(x,y, t))
        =-2 \overline{ \mathrm{Ric}} _{ij\alpha\beta}(x,y,t),\\
  \bar g_{ij\alpha\beta}(x,y,0)=\bar g^0_{ij\alpha\beta}(x,y)
    =(g_1)^0_{ij}(x)\oplus \lambda^2_0(x)(g_2)^0_{\alpha\beta}(y),\\
    \frac{\partial}{\partial t}\bar g_{ij\alpha\beta}(x,y,0)=h^0_{ij\alpha\beta}(x,y)
  \end{array}\right.
  $$
has a smooth solution $\bar
g_{ij\alpha\beta}(x,y,t)=(g_1)_{ij}(x,t)\oplus \lambda^2
(g_2)_{\alpha\beta}(y,t)$ on $\overline{M} \times [0, \bar T]$ with
uniformly bounded curvature.
\end{proposition}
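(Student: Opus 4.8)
The plan is to follow Shi's scheme \cite{Sh} for short-time existence of the flow on complete non-compact manifolds of bounded curvature, combined with the warped-product reduction provided by the unified Ricci formula (\ref{6-30-6}) and the characterizations in Theorems \ref{5-15-0} and \ref{5-24-2}. First I would adopt the warped-product ansatz: look for the solution in the form $\bar g(x,y,t)=(g_1)(x,t)\oplus\lambda^2(x,t)(g_2)(y,t)$, where $g_1(t)$ and $g_2(t)$ solve the flow on the factors $M_1$ and $M_2$ separately. By Theorem \ref{5-24-2} (for HGF; Theorem \ref{5-15-0} for RF), together with Theorem \ref{6-30-4}, such a $\bar g(t)$ solves the flow on $\overline{M}$ with the prescribed Cauchy data exactly when $\lambda(x,t)$ satisfies the characteristic equation (\ref{7-2-6}) and the compatibility constraint (\ref{7-2-6'}). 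This is precisely the point at which the hypothesis that $\lambda_0$ be ``imposed certain constrains'' is used: the constraints guarantee that (\ref{7-2-6'}) holds at $t=0$ and supply the non-degeneracy (e.g.\ $\inf\lambda_0>0$, bounded $\mathrm{grad}\,\lambda_0$, $\mathrm{Hess}(\lambda_0)$) needed to run the scalar equation, mirroring the conditions in Theorem 3.1 of \cite{Si}.

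Second, for the two factor metrics I would invoke the available short-time theory: on the compact factor $M_2$, existence and uniqueness from Theorem \ref{10-1-2} (resp.\ the compact RF statement); on the complete non-compact factor $M_1$ of bounded curvature, the Shi-type result (its HGF analogue obtained by the DeTurck-trick plus exhaustion argument, cf.\ \cite{DKL,Sh}). Each yields a solution on a common interval $[0,\bar T]$ with uniformly bounded curvature. Third, the scalar equation (\ref{7-2-6}) for $\lambda$ on $M_1$ is, under the stated constraints, a quasilinear equation whose analytic type matches the ambient flow (uniformly parabolic for RF, a linearization of a symmetric-hyperbolic system for HGF) with coefficients controlled by the bounded-geometry solution $g_1(t)$ already produced; one solves it by the same exhaustion-plus-\emph{a priori}-estimate machinery, appealing in particular to the explicit arc-length-coordinate analysis of \cite{Si}, which treats exactly this rotationally symmetric warped-product situation. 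Assembling $g_1$, $g_2$, and $\lambda$ produces $\bar g_{ij\alpha\beta}(x,y,t)$, and the uniform curvature bound on $\overline{M}$ then follows by substituting the factor bounds and the bounds on $\lambda$, $\mathrm{grad}\,\lambda$, $\mathrm{Hess}(\lambda)$ into the curvature formula (\ref{6-30-5}).

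The main obstacle — the reason the author states this \emph{without proof} — is obtaining \emph{a priori} curvature estimates that are uniform in the exhausting sequence of compact domains: the warped-product curvature (\ref{6-30-5}) couples derivatives of $\lambda$ with the factor curvatures, so one must control $g_1(t)$ and $\lambda(x,t)$ and their higher derivatives simultaneously. For the HGF case there is the additional, well-known difficulty that the flow is not parabolic, so the bounded-geometry short-time theory is much more delicate than in Shi's original setting: one works with the reduced symmetric hyperbolic system and energy (rather than maximum-principle) estimates, and finite propagation speed must be used to patch local solutions. A secondary technical point is to verify that the constraint (\ref{7-2-6'}) is \emph{propagated} by the flow rather than merely imposed at $t=0$; this is checked by differentiating (\ref{7-2-6'}) along the flow and using the evolution equations for the metric and warping function derived in Section 5 to see that the relation among $\mathrm{Hess}(\lambda)$, $\partial_t\lambda^2$ and $\partial_t g_2$ satisfies a linear homogeneous ODE in $t$, hence vanishes for all $t$ if it vanishes initially.
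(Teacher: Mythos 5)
The paper states this proposition explicitly ``without proof,'' so there is no argument of the author's to compare yours against; what can be assessed is whether your plan would actually close. As a plan it identifies the right ingredients (the unified Ricci formula (\ref{6-30-6}), the reduction of Theorem \ref{5-24-2}, Shi's exhaustion scheme), but it has genuine gaps that are not merely technical. First, the initial value problem as stated is first order in $t$ yet carries second-order Cauchy data $\frac{\partial}{\partial t}\bar g(\cdot,0)=h^0$; you treat it as the HGF (second-order) problem in places and as the RF problem in others. Under the RF reading the datum $h^0$ is redundant and must equal $-2\overline{\mathrm{Ric}}_{\bar g_0}$, and under the HGF reading the evolution equation must be $\frac{\partial^2}{\partial t^2}\bar g=-2\overline{\mathrm{Ric}}$; the proof cannot proceed until this is fixed. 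Second, and more seriously, for the HGF reading the external theorem you lean on does not exist in the cited literature: Shi's result \cite{Sh} concerns the parabolic Ricci flow and its a priori estimates come from the maximum principle, while the short-time existence theorem of \cite{DKL} for HGF is proved only on compact manifolds. The sentence about ``the reduced symmetric hyperbolic system, energy estimates and finite propagation speed'' names the correct tools, but that is precisely the content that would have to be proved; it is not a corollary of anything available, and this is presumably why the author leaves the proposition unproved.

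Third, the warped-product ansatz is incompatible with the hypotheses as you use them: the statement allows $h^0$ to be an essentially arbitrary symmetric tensor on $M_1\times M_2$, but for the solution to remain of the form $(g_1)_{ij}(x,t)\oplus\lambda^2(x,t)(g_2)_{\alpha\beta}(y,t)$ the initial velocity must itself split with no cross terms and with its $M_2$-block of the form $\mu(x)(g_2)^0_{\alpha\beta}+\lambda_0^2\,k_{\alpha\beta}(y)$; otherwise the conclusion is simply false, so this compatibility must be added to the ``certain constraints'' on the data. Relatedly, your claim that the constraint (\ref{7-2-6'}) is propagated because the defect ``satisfies a linear homogeneous ODE in $t$'' is asserted rather than checked: (\ref{7-2-6'}) couples $\mathrm{Hess}(\lambda)$, $\frac{\partial}{\partial t}\lambda^2$ and $\frac{\partial}{\partial t}g_2$, and differentiating it along the flow brings in second spatial derivatives through the evolution equations of Section 5, so the resulting identity is a PDE for the defect, and its homogeneity requires an actual computation. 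Until these three points are addressed the proposal is a programme rather than a proof.
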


In order to gain a sense for (\ref{7-2-6}), we present several
special examples.

\begin{example} (Trivial example) If $\lambda$ is constant, then we easily observe
from (\ref{7-2-6}) and (\ref{7-2-6'}) that $\lambda=\pm 1$. Since
$\lambda$ is positive, thus $\lambda=1$, which implies that
$\overline{M}$ is exactly a direct product manifold. This is a fact.
\end{example}

\begin{example} \label{7-2-07} For simplicity sake, we manage to let the unknown term
$\frac{\partial }{\partial t} \bar g_2(x,y,t)=0$ in (\ref{7-2-6}).
Take $M_2=S^{n}$ $(n \geq 2)$ with the standard metric which implies
$M_2$ admits an Einstein metric $g_2=g_{S^n}$. Like the previous
Example \ref{9-30-2}, let $M_1=\mathbb R$ with flat metric
$g_1=\mu(x)dx^2$. On $\overline M=\mathbb R\times S^n$, the warped
product metric
  $\bar g=\mu^2(x) dx^2\oplus \lambda^2(x)g_{S^n}$ can be read as
      $$\bar g(s,y)=ds^2 \oplus \lambda^2(s)g_{ S^{n} }(y),$$
where $s=\int_0^x \mu(x) dx$ is the arc-length parameter.

Remembering $dx^2$ and $g_{S^n}$ are independent of $t$, we get
       \begin{equation}\label{10-2-03}
     \begin{split}
     &\frac{\partial^2 }{\partial t^2}\big( \mu^2(x,t) dx^2\oplus \lambda^2(x,t)g_{S^n}(y) \big)\\
     &= 2(\mu\mu_{tt}+\mu_t^2)dx^2 \oplus 2(\lambda\lambda_{tt}+\lambda_t^2) g_{S^n}\\
     &=2\frac{\mu\mu_{tt}+\mu_t^2}{\mu^2}ds^2\oplus 2(\lambda\lambda_{tt}+\lambda_t^2) g_{S^n}.
      \end{split}
    \end{equation}
Therefore, if the warped product metrics $\bar g(x,y,t)=\mu^2(x,t)
dx^2\oplus \lambda^2(x,t)g_{S^n}(y)$ is a solution to the HGF
(\ref{10-1}), then substituting (\ref{6-30-6}) and (\ref{10-2-03})
into (\ref{10-1}), we obtain
  \begin{equation} \label{10-2-2}
    \left\{  \begin{array}{l}
     \frac{ \lambda\lambda_{tt}+\lambda^2_t}{\lambda^2}=-\big(K_{rad}+(n-1)K_{sph}\big),\\
     \frac{\mu\mu_{tt}+\mu_t^2}{\mu^2}=-nK_{rad},
     \end{array} \right.
     \end{equation}
which happens to be
     \begin{equation} \label{10-2-3}
     \left\{  \begin{array}{l}
     \frac{\partial^2 \bar\lambda}{\partial t^2}
        =-\big(K_{rad}+(n-1)K_{sph}\big),\\
     \frac{\partial^2 \bar \mu}{\partial t^2}=-nK_{rad},
    \end{array} \right.
    \end{equation}
where we assume that there are exactly the relations
  \begin{equation} \label{10-2-4}
   \bar\lambda_{tt}=\frac{\lambda\lambda_{tt}+\lambda^2_t}{\lambda^2}
   \end{equation}
and
   \begin{equation} \label{10-2-5}
     \bar \mu_{tt}=\frac{\mu\mu_{tt}+\mu_t^2}{\mu^2}.
   \end{equation}
 Since the sectional curvature functions $K_{rad}$ and
$K_{sph}$ are of uniform bound, we can integrate (\ref{10-2-3}) over
the time interval $[0,t],t<T$ for twice and get
   \begin{equation*}
    \left\{  \begin{array}{l}
   \bar \lambda(x,t)=\bar \lambda(x,0)+t \bar\lambda_t(x,0)
           -\int_0^t\int_0^u \big(K_{rad}+(n-1)K_{sph}\big)dudt,\\
    \bar \mu(x,t)=\bar\mu(x,0)+t\bar\mu_t(x,0)-n \int_0^t\int_0^u K_{rad}dudt.
   \end{array} \right.
    \end{equation*}
Further we locally re-solve the original functions $\lambda(x,t)$
and $\mu(x,t)$.
\end{example}

\begin{remark}
 Although (\ref{10-2-2}) may look simple and has a local solution, we have to remind it
 is a set of nonlinear weakly hyperbolic PDEs
   \begin{equation} \label{10-2-3'}
     \left\{  \begin{array}{l}
     \lambda_{tt}-\lambda_{ss}=
     \frac{1}{\lambda}\lambda_t^2+\frac{1}{\lambda}\lambda_s^2-(n-1)\lambda,\\
       \frac{1} \mu_{tt}+\frac{1}{\mu^2}\mu_t^2=\frac{n}{\lambda}\lambda_{ss},
      \end{array} \right.
    \end{equation}
 which is almost never easy to solve. (\ref{10-2-4}) and
 (\ref{10-2-5}) may be only our own wishful thinking or be taken for granted.
\end{remark}

\section{Evolution equations of warping function and Ricci curvature  }

 In this section we present evolution equations for
an arbitrary family of warped product metrics $\bar
g(x,y,t)=g_1(x,t)\oplus \lambda^2(x,t)g_2(y,t), t\in [0,T]$ with
Einstein metric $g_2$ that is evolving by RF and by HGF as well. We
also present evolution equations for the Ricci curvatures of such an
 evolving metric. Our idea mainly comes from Simon's strategy \cite{Si}.

From now on, we make informal convention for some notations on
$\overline M=M_1\times_{\lambda} M_2$:
  \begin{equation}  \label{10-5-00}
   \begin{split}
 &\partial_i :=\frac{\partial}{\partial x^i}, \quad i=1,\ldots,m_1;\quad
 \partial_\alpha :=\frac{\partial}{\partial y^\alpha}, \quad \alpha=1,\ldots, m_2;\\
 & \bar g_{ij}=\bar g_{(i0)(j0)}:={\bar g}\big((\partial_i,0),(\partial_j,0) \big);
 \quad \bar g_{\alpha\beta}=\bar g_{(0\alpha)(0\beta)}:=\bar g\big( (0,\partial_\alpha),(0,\partial_\beta)\big);\\
  &\bar g_{ij\alpha\beta}=\bar g_{(i\alpha)(j\beta)}
                :=\bar g
                \big((\partial_i,\partial_\alpha),(j,\partial_\beta)\big);\quad
         ( \bar g^{ij\alpha\beta}):=(\bar g_{ij\alpha\beta})^{-1};\\
  &    \overline{\mathrm{Rm}}_{(i\alpha)(j\beta)(k\sigma)(l\tau)}:=
         \overline{\mathrm{Rm}}\left( (\partial_i,\partial_\alpha),(\partial_j,\partial_\beta),
         (\partial_k,\partial_\sigma),(\partial_l,\partial_\tau)\right);\\
  &\overline{\mathrm{Ric}}_{ij}=\overline{\mathrm{Ric}}_{(i0)(j0)}
     :=\overline{\mathrm{Ric}}\big((\partial_i,0),(\partial_j,0)\big);
     \quad
  \overline{\mathrm{Ric}}_{ij\alpha\beta}=\overline{\mathrm{Ric} }_{(i\alpha)(j\beta)}
     :=\overline{\mathrm{Ric}}\big((\partial_i,\partial_\alpha),(\partial_j,\partial_\beta)\big).
    \end{split}
   \end{equation}

\subsection{Metric and warping function evolution equations}

Since the cross terms of $\bar g$ are zero, we need only to consider
the evolution equations of $\bar g_{ij}$ and $\bar g_{\alpha\beta}$.
Meanwhile we make a assumption that $g_2$ has a fixed Einstein
metric of the form $\,^{M_2}\!\mathrm{Ric}=c g_2$ (c is some
constant) and derive the evolution equation of warping function.

\begin{proposition}\label{10-5-01} Let the smooth warped product metric
      $$\bar g(x,y,t)=g_1(x,t)\oplus \lambda^2(x,t)g_2(y,t), t\in [0,\bar T)$$
 be a solution to the Ricci flow (\ref{9-28}) on the manifold $M_1\times M_2$.
Then the metrics $g_1$ and $g_2$ satisfy the evolution equations\\
  \begin{equation} \label{10-5-02}
      \frac{\partial}{\partial t}(g_1)_{ij} =-2
      \,^{M_1}\!\mathrm{Ric}_{ij}
          +\frac{2m_2}{\lambda} \mathrm{Hess}(\lambda)(\partial_i,\partial_j),\\
   \end{equation}

  \begin{equation} \label{10-5-03}
      \frac{\partial}{\partial t} \left(\lambda^2 (g_2)_{\alpha\beta}\right)=-2
      \,^{M_2}\!\mathrm{Ric}_{\alpha\beta}
          +\left(\Delta_{M_1}\lambda^2 +(2m_2-4)|\mathrm{grad}
 \lambda|^2\right)(g_2)_{\alpha\beta}
  \end{equation}
\end{proposition}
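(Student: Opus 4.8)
The plan is to start from the unified Ricci tensor formula (\ref{6-30-6}) and simply read off its components in the adapted coordinate frame $\{(\partial_i,0),(0,\partial_\alpha)\}$, then match them against the Ricci flow equation $\frac{\partial}{\partial t}\bar g = -2\overline{\mathrm{Ric}}$. Concretely, specializing (\ref{6-30-6}) to purely horizontal arguments $(\partial_i,0),(\partial_j,0)$ kills the $g_2(X_2,Y_2)$ terms and gives $\overline{\mathrm{Ric}}_{ij} = {}^{M_1}\!\mathrm{Ric}_{ij} - \frac{m_2}{\lambda}\mathrm{Hess}(\lambda)(\partial_i,\partial_j)$. Since the mixed components of $\bar g$ vanish and $\frac{\partial}{\partial t}\bar g_{ij} = \frac{\partial}{\partial t}(g_1)_{ij}$, equating with $-2\overline{\mathrm{Ric}}_{ij}$ yields (\ref{10-5-02}) directly. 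No Einstein assumption is needed here.

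For (\ref{10-5-03}) I would specialize (\ref{6-30-6}) to purely vertical arguments $(0,\partial_\alpha),(0,\partial_\beta)$. Here ${}^{M_1}\!\mathrm{Ric}$ drops out and $\mathrm{Hess}(\lambda)(0,0)=0$, leaving
\begin{equation*}
\overline{\mathrm{Ric}}_{\alpha\beta} = {}^{M_2}\!\mathrm{Ric}_{\alpha\beta} - \lambda(\Delta_{M_1}\lambda)(g_2)_{\alpha\beta} - (m_2-1)|\mathrm{grad}\,\lambda|^2 (g_2)_{\alpha\beta}.
\end{equation*}
On the other hand $\bar g_{\alpha\beta} = \lambda^2 (g_2)_{\alpha\beta}$, so the Ricci flow equation reads $\frac{\partial}{\partial t}(\lambda^2 (g_2)_{\alpha\beta}) = -2\overline{\mathrm{Ric}}_{\alpha\beta}$. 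Substituting and using the identity $\Delta_{M_1}\lambda^2 = 2\lambda\Delta_{M_1}\lambda + 2|\mathrm{grad}\,\lambda|^2$ to rewrite $2\lambda\Delta_{M_1}\lambda = \Delta_{M_1}\lambda^2 - 2|\mathrm{grad}\,\lambda|^2$, the two curvature-of-$\lambda$ terms combine as
\begin{equation*}
2\lambda\Delta_{M_1}\lambda + 2(m_2-1)|\mathrm{grad}\,\lambda|^2 = \Delta_{M_1}\lambda^2 + (2m_2-4)|\mathrm{grad}\,\lambda|^2,
\end{equation*}
which is exactly the bracketed coefficient in (\ref{10-5-03}). (The Einstein hypothesis ${}^{M_2}\!\mathrm{Ric} = c\,g_2$ is what makes the ${}^{M_2}\!\mathrm{Ric}_{\alpha\beta}$ term behave well later; for this proposition one can either keep $-2\,{}^{M_2}\!\mathrm{Ric}_{\alpha\beta}$ as written or replace it by $-2c(g_2)_{\alpha\beta}$.)

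The only genuinely non-routine point is bookkeeping: one must be careful that $\overline{\mathrm{Ric}}_{\alpha\beta}$ as defined in (\ref{10-5-00}) uses the coordinate vectors $(0,\partial_\alpha)$ rather than the $\lambda$-normalized orthonormal frame $(0,\tfrac1\lambda \bar e_\alpha)$ used in the proof of Theorem \ref{6-30-4}, so the $\lambda^2$ factors land in the right places; and that $\frac{\partial}{\partial t}$ commutes with the coordinate frame so that $\frac{\partial}{\partial t}\bar g_{\alpha\beta}$ genuinely equals $\frac{\partial}{\partial t}(\lambda^2(g_2)_{\alpha\beta})$ with no connection terms. Once the normalization is fixed, both (\ref{10-5-02}) and (\ref{10-5-03}) are immediate algebraic consequences of (\ref{6-30-6}) and the definition of the Ricci flow, and I expect no real obstacle beyond this indexing care.
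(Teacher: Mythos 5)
Your proposal is correct and follows exactly the paper's own route: identify $\bar g_{ij}=(g_1)_{ij}$ and $\bar g_{\alpha\beta}=\lambda^2(g_2)_{\alpha\beta}$, substitute the components of the unified Ricci formula (\ref{6-30-6}) into the flow equation (\ref{9-28}), and use $\Delta_{M_1}\lambda^2=2\lambda\Delta_{M_1}\lambda+2|\mathrm{grad}\,\lambda|^2$ to obtain the coefficient $\Delta_{M_1}\lambda^2+(2m_2-4)|\mathrm{grad}\,\lambda|^2$. The paper states this in one line; your version merely spells out the same algebra, and you are right that no Einstein hypothesis is needed at this stage.
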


\begin{proof} Since we see that $(g_1)_{ij}=\bar g_{ij}$ and
  $\lambda^2 (g_2)_{\alpha\beta}=\bar g_{\alpha\beta}$, by using the
  Ricci flow (\ref{9-28}) and Ricci curvature formula
  (\ref{6-30-6}), we immediately get the desired identities
  (\ref{10-5-02}) and (\ref{10-5-03}).
\end{proof}

\begin{corollary}  Soppose that $g_2$ is a fixed Einstein metric with constant $c$. Then under
the Ricci flow (\ref{9-28}), the warping function $\lambda$
satisfies the following evolution equation

  \begin{equation} \label{10-5-04}
      \frac{\partial}{\partial t} \lambda^2 =-2c
           +\Delta_{M_1}\lambda^2 +(2m_2-4)|\mathrm{grad} \lambda|^2
  \end{equation}
\end{corollary}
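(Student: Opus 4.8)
The plan is to specialize the general evolution equation for $\lambda^2(g_2)_{\alpha\beta}$ obtained in Proposition \ref{10-5-01} to the case where $g_2$ is a fixed Einstein metric. Since $g_2$ is fixed in time, we have $\frac{\partial}{\partial t}(g_2)_{\alpha\beta}=0$, so the left-hand side of (\ref{10-5-03}) collapses to $\frac{\partial}{\partial t}(\lambda^2)(g_2)_{\alpha\beta}$. On the right-hand side, the Einstein condition $\,^{M_2}\!\mathrm{Ric}=c\,g_2$ means $\,^{M_2}\!\mathrm{Ric}_{\alpha\beta}=c\,(g_2)_{\alpha\beta}$.

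First I would write down (\ref{10-5-03}) verbatim and substitute these two facts, giving
\[
\frac{\partial}{\partial t}(\lambda^2)\,(g_2)_{\alpha\beta}
= -2c\,(g_2)_{\alpha\beta}
+\left(\Delta_{M_1}\lambda^2+(2m_2-4)|\mathrm{grad}\,\lambda|^2\right)(g_2)_{\alpha\beta}.
\]
Then, since $(g_2)_{\alpha\beta}$ is a nondegenerate metric tensor and the identity holds for all $\alpha,\beta$, I would cancel the common factor $(g_2)_{\alpha\beta}$ (equivalently, contract against $(g_2)^{\alpha\beta}$ and divide by $m_2$, noting that $\Delta_{M_1}\lambda^2$, $|\mathrm{grad}\,\lambda|^2$, $c$, and $\frac{\partial}{\partial t}\lambda^2$ are all functions of $x$ and $t$ only, independent of the $M_2$-indices). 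This yields (\ref{10-5-04}) directly.

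The argument is essentially a one-line substitution, so there is no serious obstacle; the only point requiring a word of care is the legitimacy of cancelling $(g_2)_{\alpha\beta}$, which is justified because $g_2$ is a Riemannian metric (hence pointwise invertible) and every other tensorial object appearing is a scalar function pulled back from $M_1$. I would also remark that (\ref{10-5-04}) is consistent with Theorem \ref{5-15-0}: combining it with $\frac{\partial}{\partial t}\lambda^2=2\lambda\frac{\partial\lambda}{\partial t}$, $\Delta_{M_1}\lambda^2=2\lambda\Delta_{M_1}\lambda+2|\mathrm{grad}\,\lambda|^2$, and $\,^{M_2}\!\mathrm{Scal}=c\,m_2$ recovers (\ref{7-2-1}) once the constraint $\mathrm{Hess}(\lambda)=0$ is taken into account — though this consistency check is optional and not needed for the proof itself.
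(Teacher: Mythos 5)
Your proposal is correct and is exactly the paper's argument: the paper likewise obtains (\ref{10-5-04}) by setting $\partial_t g_2=0$ and $\,^{M_2}\!\mathrm{Ric}_{\alpha\beta}=c\,(g_2)_{\alpha\beta}$ in (\ref{10-5-03}), with the cancellation of the common factor $(g_2)_{\alpha\beta}$ left implicit. Your extra remarks on the legitimacy of that cancellation and the consistency with Theorem \ref{5-15-0} are fine but not needed.
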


\begin{proof} By already assumption, $g_2$ is independent of $t$.
Combining this and $\,^{M_2}\!\mathrm{Ric}_{\alpha\beta}
  =c(g_2)_{\alpha\beta}$, (\ref{10-5-04}) follows from (\ref{10-5-03}).
\end{proof}

Similar to the above results, we have the parallel conclusions under
the HGF.

\begin{proposition}\label{10-5-05} Let
      $$\bar g(x,y,t)=g_1(x,t)\oplus \lambda^2(x,t)g_2(y,t), t\in [0,\bar T)$$
 be a solution to the hyperbolic geometric flow (\ref{10-1}) on the manifold $M_1\times M_2$, where
 $g_2$ is a fixed Einstein metric with constant $c$.
Then the metrics $g_1$ and the warping function $\lambda$ satisfy the evolution equation\\
  \begin{equation} \label{10-5-06}
      \frac{\partial^2}{\partial t^2}(g_1)_{ij} =-2
      \,^{M_1}\!\mathrm{Ric}_{ij}
          +\frac{2m_2}{\lambda} \mathrm{Hess}(\lambda)(\partial_i,\partial_j),\\
   \end{equation}

\begin{equation} \label{10-5-07}
      \frac{\partial^2}{\partial t^2} \lambda^2 =-2c
           +\Delta_{M_1}\lambda^2 +(2m_2-4)|\mathrm{grad} \lambda|^2
    \end{equation}
\end{proposition}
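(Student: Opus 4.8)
The plan is to follow verbatim the argument of Proposition~\ref{10-5-01} together with the Corollary that produced (\ref{10-5-04}), simply replacing the first time derivative by the second and the Ricci flow (\ref{9-28}) by the hyperbolic geometric flow (\ref{10-1}). First I would note that, since the mixed components of $\bar g=g_1\oplus\lambda^2 g_2$ vanish, one has $\bar g_{ij}=(g_1)_{ij}$ and $\bar g_{\alpha\beta}=\lambda^2(g_2)_{\alpha\beta}$, so that (\ref{10-1}) splits into the two block equations $\frac{\partial^2}{\partial t^2}(g_1)_{ij}=-2\,\overline{\mathrm{Ric}}_{ij}$ and $\frac{\partial^2}{\partial t^2}\big(\lambda^2(g_2)_{\alpha\beta}\big)=-2\,\overline{\mathrm{Ric}}_{\alpha\beta}$.

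Next I would substitute the unified Ricci formula (\ref{6-30-6}) into each block. Taking $X_2=Y_2=0$ in (\ref{6-30-6}) annihilates every term carrying a factor $g_2(X_2,Y_2)$ and leaves $\overline{\mathrm{Ric}}_{ij}={}^{M_1}\!\mathrm{Ric}_{ij}-\frac{m_2}{\lambda}\mathrm{Hess}(\lambda)(\partial_i,\partial_j)$; multiplying by $-2$ gives exactly (\ref{10-5-06}). Taking instead $X_1=Y_1=0$ in (\ref{6-30-6}) and using the Einstein condition ${}^{M_2}\!\mathrm{Ric}=c\,g_2$ yields $\overline{\mathrm{Ric}}_{\alpha\beta}=\big(c-\lambda\,\Delta_{M_1}\lambda-(m_2-1)|\mathrm{grad}\,\lambda|^2\big)(g_2)_{\alpha\beta}$.

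The only point needing a moment's attention --- and it is precisely what makes this statement simpler than Theorem~\ref{5-24-2} --- is that $g_2$ is now \emph{fixed}, hence $\frac{\partial^2}{\partial t^2}\big(\lambda^2(g_2)_{\alpha\beta}\big)=\big(\frac{\partial^2}{\partial t^2}\lambda^2\big)(g_2)_{\alpha\beta}$ with no cross term $2\,\frac{\partial}{\partial t}(\lambda^2)\,\frac{\partial}{\partial t}(g_2)_{\alpha\beta}$, so the awkward $Tr_{g_2}(\partial_t g_2)$ contribution of (\ref{7-2-6}) disappears. Equating this with $-2\,\overline{\mathrm{Ric}}_{\alpha\beta}$ and cancelling $(g_2)_{\alpha\beta}$ gives $\frac{\partial^2}{\partial t^2}\lambda^2=-2c+2\lambda\,\Delta_{M_1}\lambda+2(m_2-1)|\mathrm{grad}\,\lambda|^2$. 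Finally I would invoke the elementary identity $\Delta_{M_1}\lambda^2=2\lambda\,\Delta_{M_1}\lambda+2|\mathrm{grad}\,\lambda|^2$, i.e. $2\lambda\,\Delta_{M_1}\lambda=\Delta_{M_1}\lambda^2-2|\mathrm{grad}\,\lambda|^2$, to rewrite the right-hand side as $-2c+\Delta_{M_1}\lambda^2+(2m_2-4)|\mathrm{grad}\,\lambda|^2$, which is (\ref{10-5-07}). There is no real obstacle here: the computation is entirely parallel to the Ricci-flow case of Proposition~\ref{10-5-01}, and the ``hard part'' is merely keeping track of the Hessian-to-Laplacian conversion and noticing that the fixed-metric hypothesis removes all mixed $t$-derivatives.
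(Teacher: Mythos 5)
Your proposal is correct and is exactly the argument the paper intends: the paper omits the proof of this proposition entirely (stating only that it is "similar to the above results"), and your computation reproduces the proof of Proposition~\ref{10-5-01} and its corollary with $\partial_t$ replaced by $\partial_t^2$, including the correct use of the block-diagonal splitting of (\ref{10-1}), the unified Ricci formula (\ref{6-30-6}), the Einstein condition, and the identity $\Delta_{M_1}\lambda^2=2\lambda\Delta_{M_1}\lambda+2|\mathrm{grad}\,\lambda|^2$.
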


\subsection{Ricci curvature evolution equations}

From (\ref{6-30-6}) or Proposition \ref{5-23-3}, we see that the
cross terms of $\overline{\mathrm{Ric}}$ are zero. Hence we only
consider the evolution equations for $\overline{\mathrm{Ric}}_{ij}$
and $\overline{\mathrm{Ric}}_{\alpha\beta}$.

\begin{theorem}\label{10-5-07} Under the Ricci flow (\ref{9-28})
on the manifold $\overline M$, the Ricci curvature
$\overline{\mathrm{Ric}}_{ij}$ and
$\overline{\mathrm{Ric}}_{\alpha\beta}$ satisfy the following evolution equations\\
  \begin{equation} \label{10-5-08}
      \frac{\partial }{\partial t} \overline{\mathrm{Ric}}_{ij}
     = \bar\Delta \overline{\mathrm{Ric}}_{ij}
       +\frac{2}{m_2}\bar g^{\alpha\beta}\overline{\mathrm{Ric}}_{\alpha\beta}
           \big(\overline{\mathrm{Ric}}_{ij}-\,^{M_1}\!\mathrm{Ric}_{ij}\big)
    -2\bar g^{kl}\overline{\mathrm{Ric}}_{ik}\overline{\mathrm{Ric}}_{jl},
   \end{equation}

  \begin{equation} \label{10-5-09}
      \begin{split}
    &\frac{\partial }{\partial t} \overline{\mathrm{Ric}}_{\alpha\beta}
     = \bar\Delta \overline{\mathrm{Ric}}_{\alpha\beta}
      +\frac{2}{m_2}\bar g^{k l} \bar g^{p q } \overline{\mathrm{Ric}}_{l q }
      \big(\overline{\mathrm{Ric}}_{kp}-\,^{M_1}\!\mathrm{Ric}_{kp}\big)\bar g_{\alpha\beta}
      -2\bar g^{ \gamma \delta}\overline{\mathrm{Ric}}_{\gamma \alpha}
         \overline{\mathrm{Ric}}_{\delta \beta }\\
       &\quad +2\lambda^2\bar g^{\gamma \delta} \bar g^{\sigma \tau}
        \overline{\mathrm{Ric}}_{\delta \tau}\left(\,^{M_2}\!\mathrm{Rm}_{\alpha\gamma\beta\sigma}
    +|\mathrm{grad} \lambda|^2 \big( (g_2)_{\alpha\sigma}(g_2)_{\beta\gamma}
      -(g_2)_{\alpha\beta}(g_2)_{\gamma\sigma} \big)  \right).\\
      \end{split}
  \end{equation}
\end{theorem}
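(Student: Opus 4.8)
The plan is to read \eqref{10-5-08}--\eqref{10-5-09} as Hamilton's evolution equation for the Ricci tensor, written out block by block on the warped product. Since $\bar g(t)$ is, by hypothesis, a solution of the Ricci flow \eqref{9-28} on $\overline M$, Hamilton's computation of the evolution of $\mathrm{Ric}$ under $\partial_t g=-2\mathrm{Ric}$ applies verbatim to $\bar g(t)$ and gives
\[
 \frac{\partial}{\partial t}\overline{\mathrm{Ric}}_{AB}
  =\bar\Delta\,\overline{\mathrm{Ric}}_{AB}
  +2\,\overline{\mathrm{Rm}}_{ACBD}\,\overline{\mathrm{Ric}}^{CD}
  -2\,\overline{\mathrm{Ric}}_{AC}\,\overline{\mathrm{Ric}}^{C}{}_{B},
\]
where $A,B,C,D$ run over all coordinate directions of $\overline M$, indices are raised with $\bar g$, and $\bar\Delta$ is the rough Laplacian of $\bar g$ acting on the symmetric $2$-tensor $\overline{\mathrm{Ric}}$. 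One then specializes $(A,B)$ to a horizontal pair $(i,j)$ and to a vertical pair $(\alpha,\beta)$; this is legitimate because $\bar\Delta$ preserves the two blocks.

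The second step is to decompose the two algebraic terms using the block structure. By \eqref{6-30-6} (equivalently Proposition \ref{5-23-3}(2)) the tensor $\overline{\mathrm{Ric}}$ has no mixed components, so in $\overline{\mathrm{Rm}}_{ACBD}\overline{\mathrm{Ric}}^{CD}$ and in $\overline{\mathrm{Ric}}_{AC}\overline{\mathrm{Ric}}^{C}{}_{B}$ the contracted pair $(C,D)$ is either purely horizontal $(k,l)$ or purely vertical $(\gamma,\delta)$. The quadratic terms then reduce at once to $-2\bar g^{kl}\overline{\mathrm{Ric}}_{ik}\overline{\mathrm{Ric}}_{jl}$ in the horizontal equation and to $-2\bar g^{\gamma\delta}\overline{\mathrm{Ric}}_{\gamma\alpha}\overline{\mathrm{Ric}}_{\delta\beta}$ in the vertical one. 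For the curvature term I would substitute the unified Riemann formula \eqref{6-30-5}: its purely horizontal component contributes only $\,^{M_1}\!\mathrm{Rm}$, its purely vertical component contributes $\lambda^2\,^{M_2}\!\mathrm{Rm}$ together with the $|\mathrm{grad}\,\lambda|^2$ terms quadratic in $g_2$, and the mixed components of the shape $\overline{\mathrm{Rm}}\big((0,\cdot),(\partial_i,0),(\partial_j,0),(0,\cdot)\big)$ collapse, after discarding the many terms in \eqref{6-30-5} that vanish because one factor is horizontal while the relevant slot is vertical, to $\lambda\,\mathrm{Hess}(\lambda)(\partial_i,\partial_j)\,(g_2)_{\gamma\delta}$. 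Using $\bar g_{\alpha\beta}=\lambda^2(g_2)_{\alpha\beta}$, hence $\bar g^{\gamma\delta}=\lambda^{-2}(g_2)^{\gamma\delta}$, together with the identity $\overline{\mathrm{Ric}}_{ij}-\,^{M_1}\!\mathrm{Ric}_{ij}=-\tfrac{m_2}{\lambda}\mathrm{Hess}(\lambda)(\partial_i,\partial_j)$ read off from \eqref{6-30-6}, the mixed contraction turns into $\tfrac{2}{m_2}\bar g^{\alpha\beta}\overline{\mathrm{Ric}}_{\alpha\beta}\big(\overline{\mathrm{Ric}}_{ij}-\,^{M_1}\!\mathrm{Ric}_{ij}\big)$; the same mechanism in the vertical equation produces the analogous term carrying the extra factor $\bar g_{\alpha\beta}$, while the vertical block of $\overline{\mathrm{Rm}}$ supplies the $\,^{M_2}\!\mathrm{Rm}$ and $|\mathrm{grad}\,\lambda|^2$ terms on the last line of \eqref{10-5-09}. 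Collecting everything yields \eqref{10-5-08} and \eqref{10-5-09}. (Alternatively, one could differentiate \eqref{6-30-6} directly in $t$ using the metric evolution in Proposition \ref{10-5-01}, but that route requires the variation formula for $\,^{M_1}\!\mathrm{Ric}$ and amounts to the same work.)

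The main obstacle is the bookkeeping in this last collection step. One must be scrupulous about index placements and about the powers of $\lambda$ generated each time a vertical index is raised with $\bar g$, track which of the eight correction terms in \eqref{6-30-5} survives each contraction, and repeatedly trade $\mathrm{Hess}(\lambda)$ for $\overline{\mathrm{Ric}}-\,^{M_1}\!\mathrm{Ric}$ via \eqref{6-30-6}. The genuinely delicate point is to verify that the purely horizontal part of the curvature contraction, $2\,^{M_1}\!\mathrm{Rm}_{kijl}\overline{\mathrm{Ric}}^{kl}$, reorganizes into exactly the displayed right-hand sides and leaves no leftover $\,^{M_1}\!\mathrm{Rm}\ast\overline{\mathrm{Ric}}$ remainder, and, relatedly, to fix every sign consistently with the curvature convention underlying \eqref{6-30-5}. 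Once this is settled, passing to the vertical block and simplifying with $\bar g_{\alpha\beta}=\lambda^2(g_2)_{\alpha\beta}$ is routine.
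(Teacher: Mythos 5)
Your route is the same as the paper's: apply Hamilton's evolution equation for the Ricci tensor to the solution $\bar g(t)$, restrict to the horizontal block $(i,j)$ and the vertical block $(\alpha,\beta)$, kill the mixed contractions using $\bar g^{i\alpha}=0$ and $\overline{\mathrm{Ric}}_{i\alpha}=0$, and convert the surviving curvature contractions via \eqref{6-30-5} together with the trade $\overline{\mathrm{Ric}}_{ij}-\,^{M_1}\!\mathrm{Ric}_{ij}=-\frac{m_2}{\lambda}\mathrm{Hess}(\lambda)(\partial_i,\partial_j)$ coming from \eqref{6-30-6}. Up to bookkeeping this is exactly the paper's computation, and your handling of the vertical equation \eqref{10-5-09} matches it term for term.

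However, the one step you defer --- checking that the purely horizontal contraction ``reorganizes into exactly the displayed right-hand sides and leaves no leftover'' --- is precisely where the argument does not close. From \eqref{6-30-5}, with $Z_2=Y_2=0$ the nonzero components of $\overline{\mathrm{Rm}}_{(\bar k \bar\gamma)(i0)(\bar p \bar\sigma)(j0)}$ are of two kinds: the mixed ones $\overline{\mathrm{Rm}}_{(0\gamma)(i0)(0\sigma)(j0)}=-\lambda\,\mathrm{Hess}(\lambda)(\partial_i,\partial_j)(g_2)_{\gamma\sigma}$, which produce the term $\frac{2}{m_2}\bar g^{\alpha\beta}\overline{\mathrm{Ric}}_{\alpha\beta}\big(\overline{\mathrm{Ric}}_{ij}-\,^{M_1}\!\mathrm{Ric}_{ij}\big)$, \emph{and} the purely horizontal ones $\overline{\mathrm{Rm}}_{(k0)(i0)(p0)(j0)}=\,^{M_1}\!\mathrm{Rm}_{kipj}$. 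The latter survive the contraction as an additional summand $2\bar g^{kl}\bar g^{pq}\,^{M_1}\!\mathrm{Rm}_{kipj}\overline{\mathrm{Ric}}_{lq}$ which does not cancel against anything and is absent from the right-hand side of \eqref{10-5-08}. The paper's own proof discards it by asserting that the only nonzero components are the mixed ones; that assertion holds only when $\,^{M_1}\!\mathrm{Rm}=0$ (for instance $m_1=1$, the case of the closing remark in which the equations reduce to Simon's). So the verification you postpone cannot succeed for a general base $M_1$: to prove the statement as printed you must either add the hypothesis that $M_1$ is flat (or one-dimensional) or carry the extra $\,^{M_1}\!\mathrm{Rm}\ast\overline{\mathrm{Ric}}$ term into \eqref{10-5-08}.
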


\begin{proof} According to our notational convention (\ref{10-5-00}) on
$\overline M$, the evolution equation of the Ricci curvature in
\cite{Ha} is transformed into such form as
    \begin{equation} \label{10-5-10}
    \frac{\partial }{\partial t} \overline{\mathrm{Ric}}_{\bar\i \bar\j \bar\alpha \bar\beta}
     = \bar\Delta \overline{\mathrm{Ric}}_{\bar\i \bar\j \bar\alpha \bar\beta}
       +2\bar g^{\bar k\bar l\bar \gamma\bar \delta} \bar g^{\bar p\bar q \bar\sigma \bar\tau}
       \overline{\mathrm{Rm}}_{(\bar k \bar \gamma)(\bar\i \bar\alpha)(\bar p \bar\sigma)(\bar\j \bar\beta)}
       \overline{\mathrm{Ric}}_{\bar l \bar q \bar\delta \bar\tau}
 -2\bar g^{\bar k\bar l\bar \gamma \bar \delta}\overline{\mathrm{Ric}}_{\bar k \bar \i \bar\gamma \bar\alpha }
     \overline{\mathrm{Ric}}_{\bar l \bar \j \bar\delta \bar\beta }.
     \end{equation}
 where $\bar\i,\bar\j=0,1,\ldots,m_1$,\quad $\bar\alpha,\bar\beta=0,1,\ldots,m_2$, etc. Hence we have
      \begin{equation} \label{10-5-11}
    \frac{\partial }{\partial t} \overline{\mathrm{Ric}}_{ij}
     = \bar\Delta \overline{\mathrm{Ric}}_{ij}
       +2\bar g^{\bar k\bar l\bar \gamma\bar \delta} \bar g^{\bar p\bar q \bar\sigma \bar\tau}
       \overline{\mathrm{Rm}}_{(\bar k \bar \gamma)(i0)(\bar p \bar\sigma)(j0)}
       \overline{\mathrm{Ric}}_{\bar l \bar q \bar\delta \bar\tau}
 -2\bar g^{\bar k\bar l\bar \gamma \bar \delta}\overline{\mathrm{Ric}}_{\bar k i \bar\gamma 0 }
     \overline{\mathrm{Ric}}_{\bar l j \bar\delta 0 }.
     \end{equation}
(\ref{6-30-3}) and (\ref{6-30-5}) tell us that the only non-zero
  $\overline{\mathrm{Rm}}_{(\bar k \bar \gamma)(i0)(\bar p\bar\sigma)(j0)}$ are of the form
  $\overline{\mathrm{Rm}}_{(0 \gamma)(i0)(0 \sigma)(j0)}$. On the other hand,
  we also see that $\bar g^{i00\alpha}=\bar g^{i\alpha}=0$ and
  $\overline{\mathrm{Ric}}_{0i\alpha 0}=\overline{\mathrm{Ric}}_{\alpha
  i}=0$. Putting these facts together, (\ref{10-5-11}) can be reduced to
     \begin{equation} \label{10-5-12}
    \frac{\partial }{\partial t} \overline{\mathrm{Ric}}_{ij}
     = \bar\Delta \overline{\mathrm{Ric}}_{ij}
       +2\bar g^{\gamma \delta} \bar g^{\sigma \tau}
       \overline{\mathrm{Rm}}_{(0 \gamma)(i0)(0\sigma)(j0)}
       \overline{\mathrm{Ric}}_{\delta \tau}
 -2\bar g^{kl}\overline{\mathrm{Ric}}_{ki}\overline{\mathrm{Ric}}_{lj}.
     \end{equation}
Since (\ref{6-30-5}) gives
    \begin{equation}\label{10-6-1}
     \overline{\mathrm{Rm}}_{(0 \gamma)(i0)(0\sigma)(j0)}
      =-\lambda \mathrm{Hess}(\lambda)(\partial_i,\partial_j) (g_2)_{\gamma\sigma}
      =-\frac{1}{\lambda} \mathrm{Hess}(\lambda)(\partial_i,\partial_j) \bar g_{\gamma\sigma},
     \end{equation}
again (\ref{6-30-6}) gives
     \begin{equation}\label{10-6-2}
     \overline{\mathrm{Ric}}_{ij}
      =\,^{M_1}\!\mathrm{Ric}_{ij}-\frac{m_2}{\lambda}\mathrm{Hess}(\lambda)(\partial_i,\partial_j),
      \end{equation}
combining (\ref{10-6-1}) and (\ref{10-6-2}) gives
       \begin{equation}\label{10-6-3}
   \overline{\mathrm{Rm}}_{(0 \gamma)(i0)(0\sigma)(j0)}
   =\frac{1}{m_2}\big(\overline{\mathrm{Ric}}_{ij}-\,^{M_1}\!\mathrm{Ric}_{ij}\big)\bar g_{\gamma\sigma}.
\end{equation}
Substituting (\ref{10-6-3}) into (\ref{10-5-12}) yields
   \begin{equation} \notag
    \frac{\partial }{\partial t} \overline{\mathrm{Ric}}_{ij}
     = \bar\Delta \overline{\mathrm{Ric}}_{ij}
       +\frac{2}{m_2}\bar g^{\delta\tau}\overline{\mathrm{Ric}}_{\delta \tau}
           \big(\overline{\mathrm{Ric}}_{ij}-\,^{M_1}\!\mathrm{Ric}_{ij}\big)
    -2\bar g^{kl}\overline{\mathrm{Ric}}_{ki}\overline{\mathrm{Ric}}_{lj},
    \end{equation}
 which is (\ref{10-5-08}).

Now we calculate the evolution of
$\overline{\mathrm{Ric}}_{\alpha\beta}$. From (\ref{10-5-10}) we get
    \begin{equation} \label{10-6-4}
    \frac{\partial }{\partial t} \overline{\mathrm{Ric}}_{\alpha\beta}
     = \bar\Delta \overline{\mathrm{Ric}}_{\alpha\beta}
       +2\bar g^{\bar k\bar l\bar \gamma\bar \delta} \bar g^{\bar p\bar q \bar\sigma \bar\tau}
       \overline{\mathrm{Rm}}_{(\bar k \bar \gamma)(0 \alpha)(\bar p \bar\sigma)(0 \beta)}
       \overline{\mathrm{Ric}}_{\bar l \bar q \bar\delta \bar\tau}
 -2\bar g^{\bar k\bar l\bar \gamma \bar \delta}\overline{\mathrm{Ric}}_{\bar k 0 \bar\gamma \alpha }
 \overline{\mathrm{Ric}}_{\bar l 0 \bar\delta \beta }.
      \end{equation}
Once again using that $\bar g^{i\alpha}=0$ and
$\overline{\mathrm{Ric}}_{\alpha i}=0$, and remembering the only
non-zero terms $\overline{\mathrm{Rm}}_{(k 0)(0 \alpha)( p 0)(0
\beta)}$ and $\overline{\mathrm{Rm}}_{(0 \gamma)(0 \alpha)(0
\sigma)(0 \beta)}$, (\ref{10-6-4}) becomes
         \begin{equation} \label{10-6-5}
         \begin{split}
    \frac{\partial }{\partial t} \overline{\mathrm{Ric}}_{\alpha\beta}
     = &\bar\Delta \overline{\mathrm{Ric}}_{\alpha\beta}
      +2\bar g^{k l} \bar g^{p q }\overline{\mathrm{Rm}}_{(k 0)(0 \alpha)( p 0)(0 \beta)}
       \overline{\mathrm{Ric}}_{l q }\\
       &+2\bar g^{\gamma \delta} \bar g^{\sigma \tau}
       \overline{\mathrm{Rm}}_{(0 \gamma)(0 \alpha)(0 \sigma)(0 \beta)}
       \overline{\mathrm{Ric}}_{\delta \tau}
 -2\bar g^{ \gamma \delta}\overline{\mathrm{Ric}}_{\gamma \alpha }
 \overline{\mathrm{Ric}}_{\delta \beta }.
      \end{split}
      \end{equation}
Since (\ref{10-6-3}) gives
         \begin{equation} \label{10-6-6}
         \overline{\mathrm{Rm}}_{(k 0)(0 \alpha)( p 0)(0 \beta)}
    =\frac{1}{m_2}\big(\overline{\mathrm{Ric}}_{kp}-\,^{M_1}\!\mathrm{Ric}_{kp}\big)\bar
    g_{\alpha\beta}
  \end{equation}
and (\ref{6-30-6}) gives
       \begin{equation} \label{10-6-7}
         \overline{\mathrm{Rm}}_{(0 \gamma)(0 \alpha)(0 \sigma)(0 \beta)}
    =\lambda^2 \,^{M_2}\!\mathrm{Rm}_{\alpha\gamma\beta\sigma}
    +\lambda^2 |\mathrm{grad} \lambda|^2 \left( (g_2)_{\alpha\sigma}
    (g_2)_{\beta\gamma}-(g_2)_{\alpha\beta} (g_2)_{\gamma\sigma}
    \right),
     \end{equation}
substituting (\ref{10-6-6}) and (\ref{10-6-7}) into (\ref{10-6-5})
gives
     \begin{equation} \label{10-6-8}
         \begin{split}
    &\frac{\partial }{\partial t} \overline{\mathrm{Ric}}_{\alpha\beta}
     = \bar\Delta \overline{\mathrm{Ric}}_{\alpha\beta}
      +\frac{2}{m_2}\bar g^{k l} \bar g^{p q } \overline{\mathrm{Ric}}_{l q }
      \big(\overline{\mathrm{Ric}}_{kp}-\,^{M_1}\!\mathrm{Ric}_{kp}\big)\bar g_{\alpha\beta}
      -2\bar g^{ \gamma \delta}\overline{\mathrm{Ric}}_{\gamma \alpha}
         \overline{\mathrm{Ric}}_{\delta \beta }\\
       &\quad +2\lambda^2\bar g^{\gamma \delta} \bar g^{\sigma \tau}
        \overline{\mathrm{Ric}}_{\delta \tau}\left(\,^{M_2}\!\mathrm{Rm}_{\alpha\gamma\beta\sigma}
    +|\mathrm{grad} \lambda|^2 \big(( (g_2)_{\alpha\sigma}(g_2)_{\beta\gamma}
      -(g_2)_{\alpha\beta}(g_2)_{\gamma\sigma}\big)  \right),\\
      \end{split}
      \end{equation}
which is (\ref{10-5-09}).
\end{proof}

 To further simplify the evolution (\ref{10-5-09}) and consider perhaps
significant implication for physics, like previous subsection we
assume that $g_2$ is a fixed Einstein metric with constant $c$. We
first give a lemma.

\begin{lemma}\label{10-5} Let
      $$\bar g(x,y,t)=g_1(x,t)\oplus \lambda^2(x,t)g_2(y,t)$$
 be a smooth warped product
metric on the manifold $M_1\times M_2$, where $g_2$ is an Einstein
metric with $\,^{M_2}\!\mathrm{Ric}=c g_2$.  Then
    \begin{equation} \label{10-5-1}
    \overline{\mathrm{Ric}}_{\alpha\beta}=f(x,t)\bar   g_{\alpha\beta},
     \end{equation}
 where
   \begin{equation} \label{10-5-001}
   \begin{split}
   f&=\frac{1}{m_2}\bar g^{\alpha\beta}\overline{\mathrm{Ric}}_{\alpha\beta}\\
    & =\frac{1}{2\lambda^2}\left((4-2m_2)|\mathrm{grad}\lambda|^2-\Delta_{M_1}\lambda^2 +2c
    \right).
   \end{split}
   \end{equation}
\end{lemma}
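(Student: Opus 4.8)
The plan is to derive the identity \eqref{10-5-1} directly from the unified Ricci formula \eqref{6-30-6}, and then compute the trace coefficient $f$ explicitly. First I would specialize \eqref{6-30-6} to the case where both slots are vertical, i.e.\ take $(X_1,X_2)=(0,\partial_\alpha)$ and $(Y_1,Y_2)=(0,\partial_\beta)$. The terms $\,^{M_1}\!\mathrm{Ric}(X_1,Y_1)$ and $\mathrm{Hess}(\lambda)(X_1,Y_1)$ both vanish since $X_1=Y_1=0$, so \eqref{6-30-6} collapses to
\[
  \overline{\mathrm{Ric}}_{\alpha\beta}
   =\,^{M_2}\!\mathrm{Ric}_{\alpha\beta}
    -\lambda (g_2)_{\alpha\beta}\,\Delta_{M_1}\lambda
    -(m_2-1)\,|\mathrm{grad}\,\lambda|^2\,(g_2)_{\alpha\beta}.
\]
Now invoke the Einstein hypothesis $\,^{M_2}\!\mathrm{Ric}_{\alpha\beta}=c\,(g_2)_{\alpha\beta}$ and the relation $\bar g_{\alpha\beta}=\lambda^2 (g_2)_{\alpha\beta}$ from \eqref{10-5-00}. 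Rewriting $c\,(g_2)_{\alpha\beta}=\tfrac{c}{\lambda^2}\bar g_{\alpha\beta}$ and similarly for the other two terms shows that every summand is a scalar multiple of $\bar g_{\alpha\beta}$, which gives \eqref{10-5-1} with an explicit proportionality function.

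Next I would compute $f$ two ways and check consistency. On one hand, tracing \eqref{10-5-1} against $\bar g^{\alpha\beta}$ and using $\bar g^{\alpha\beta}\bar g_{\alpha\beta}=m_2$ gives $f=\tfrac{1}{m_2}\bar g^{\alpha\beta}\overline{\mathrm{Ric}}_{\alpha\beta}$, the first line of \eqref{10-5-001}. On the other hand, reading off the coefficient from the displayed expression above:
\[
  f=\frac{c}{\lambda^2}-\frac{\Delta_{M_1}\lambda}{\lambda}-\frac{(m_2-1)}{\lambda^2}|\mathrm{grad}\,\lambda|^2 .
\]
The only remaining task is to massage this into the stated form $\tfrac{1}{2\lambda^2}\bigl((4-2m_2)|\mathrm{grad}\,\lambda|^2-\Delta_{M_1}\lambda^2+2c\bigr)$. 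For that I would use the elementary identities $\Delta_{M_1}\lambda^2 = 2\lambda\,\Delta_{M_1}\lambda + 2|\mathrm{grad}\,\lambda|^2$ and hence $\tfrac{\Delta_{M_1}\lambda}{\lambda} = \tfrac{1}{2\lambda^2}\bigl(\Delta_{M_1}\lambda^2 - 2|\mathrm{grad}\,\lambda|^2\bigr)$; substituting this in and collecting the $|\mathrm{grad}\,\lambda|^2$ terms yields coefficient $-(m_2-1)\cdot\tfrac{2}{2\lambda^2} + \tfrac{2}{2\lambda^2} = \tfrac{4-2m_2}{2\lambda^2}$, matching the claim.

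The argument is essentially a bookkeeping computation, so I do not anticipate a genuine obstacle; the only place where care is needed is the normalization of the gradient-square term — one must be consistent about whether $|\mathrm{grad}\,\lambda|^2$ means $g_1(\mathrm{grad}_{g_1}\lambda,\mathrm{grad}_{g_1}\lambda)$ (it does, as in Theorem \ref{6-30-4}) rather than the $\bar g$-norm — and the conversion $\Delta_{M_1}\lambda^2 = 2\lambda\Delta_{M_1}\lambda+2|\mathrm{grad}\,\lambda|^2$, which is the one identity whose sign and factor must be tracked carefully. Once that is pinned down, both expressions for $f$ in \eqref{10-5-001} agree and the lemma follows.
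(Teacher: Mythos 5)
Your proposal is correct and follows essentially the same route as the paper: specialize the unified formula (\ref{6-30-6}) to vertical slots, use the Einstein condition and $\bar g_{\alpha\beta}=\lambda^2(g_2)_{\alpha\beta}$ to read off $f$, then convert via $\lambda\Delta_{M_1}\lambda=\tfrac12\Delta_{M_1}\lambda^2-|\mathrm{grad}\,\lambda|^2$ and trace for the first equality. The arithmetic, including the coefficient $4-2m_2$, checks out.
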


\begin{proof} By (\ref{6-30-6}) and $\,^{M_2}\!\mathrm{Ric}=c g_2$ , we get
     \begin{equation} \notag
     \begin{split}
     \overline{\mathrm{Ric}}_{\alpha\beta}& =c(g_2)_{\alpha\beta}
 -\left( \lambda \Delta_{M_1}\lambda +(m_2-1)|\mathrm{grad}
 \lambda|^2\right)(g_2)_{\alpha\beta}\\
 &=\frac{1}{\lambda^2}\left( c-\lambda \Delta_{M_1}\lambda -(m_2-1)|\mathrm{grad}
 \lambda|^2\right)\bar g_{\alpha\beta}.
     \end{split}
\end{equation}
Note that
     $$ \lambda \Delta_{M_1}\lambda=\frac{1}{2}\Delta_{M_1}\lambda^2-|\mathrm{grad}\lambda|^2.$$
Putting these with (\ref{10-5-1}), we obtain
     $$ f=\frac{1}{2\lambda^2}\left(-\Delta_{M_1}\lambda^2
                 -2 (m_2-2)|\mathrm{grad}\lambda|^2+2c \right), $$
which is the second ``=" in (\ref{10-5-001}).

As to the first ``=" in (\ref{10-5-001}), note that
$\sum\limits_{\alpha,\beta=1}^{m_2}\bar  g_{\alpha\beta}\bar
g^{\alpha\beta}=m_2$, it quickly follows from (\ref{10-5-1}).
\end{proof}

Applying this Lemma, we can simplify (\ref{10-5-09}) to a better
expression.

\begin{theorem} \label{10-5-009}Assume that $g_2$ is a fixed
Einstein metric with $\,^{M_2}\!\mathrm{Ric}=c g_2$. Then under the
Ricci flow (\ref{9-28}), the Ricci curvature evolution equation
(\ref{10-5-09}) has another form :
  \begin{equation} \label{10-5-09'}
  \begin{split}
   \frac{\partial }{\partial t} \overline{\mathrm{Ric}}_{\alpha\beta}
     &= \bar\Delta \overline{\mathrm{Ric}}_{\alpha\beta} -\frac{2}{m_2} \bar g^{kp}
    \big(\overline{\mathrm{Ric}}_{kp}-\,^{M_1}\!\mathrm{Ric}_{kp}\big)
           \overline{\mathrm{Ric}}_{\alpha\beta} \\
     &\quad  +\frac{2}{m_2}\bar g^{k l} \bar g^{p q }
        \big(\overline{\mathrm{Ric}}_{kp}-\,^{M_1}\!\mathrm{Ric}_{kp}\big)
           \overline{\mathrm{Ric}}_{l q }\bar g_{\alpha\beta}.
   \end{split}
  \end{equation}
\end{theorem}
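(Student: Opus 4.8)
The plan is to reduce the theorem to a single algebraic identity and then verify it with the help of Lemma \ref{10-5}. Comparing (\ref{10-5-09}) with the asserted form (\ref{10-5-09'}), the Laplacian term $\bar\Delta\overline{\mathrm{Ric}}_{\alpha\beta}$ is common to both, and the term $\frac{2}{m_2}\bar g^{kl}\bar g^{pq}\overline{\mathrm{Ric}}_{lq}\big(\overline{\mathrm{Ric}}_{kp}-\,^{M_1}\!\mathrm{Ric}_{kp}\big)\bar g_{\alpha\beta}$ of (\ref{10-5-09}) is literally the last term of (\ref{10-5-09'}); hence the whole content of the theorem is the identity
\begin{multline*}
-2\bar g^{\gamma\delta}\overline{\mathrm{Ric}}_{\gamma\alpha}\overline{\mathrm{Ric}}_{\delta\beta}
+2\lambda^2\bar g^{\gamma\delta}\bar g^{\sigma\tau}\overline{\mathrm{Ric}}_{\delta\tau}
\Big(\,^{M_2}\!\mathrm{Rm}_{\alpha\gamma\beta\sigma}
+|\mathrm{grad}\,\lambda|^2\big((g_2)_{\alpha\sigma}(g_2)_{\beta\gamma}-(g_2)_{\alpha\beta}(g_2)_{\gamma\sigma}\big)\Big)\\
= -\frac{2}{m_2}\bar g^{kp}\big(\overline{\mathrm{Ric}}_{kp}-\,^{M_1}\!\mathrm{Ric}_{kp}\big)\overline{\mathrm{Ric}}_{\alpha\beta},
\end{multline*}
which I would prove by substituting the structural fact $\overline{\mathrm{Ric}}_{\alpha\beta}=f\,\bar g_{\alpha\beta}$ supplied by Lemma \ref{10-5} (applicable since $g_2$ is a fixed Einstein metric).

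Next I would compute the left-hand side. Inserting $\overline{\mathrm{Ric}}_{\gamma\alpha}=f\bar g_{\gamma\alpha}$ turns the first term into $-2f^2\bar g_{\alpha\beta}=-2f\,\overline{\mathrm{Ric}}_{\alpha\beta}$. In the curvature term, $\bar g^{\sigma\tau}\overline{\mathrm{Ric}}_{\delta\tau}=f\delta^{\sigma}_{\delta}$ and $\lambda^2\bar g^{\gamma\sigma}=g_2^{\gamma\sigma}$, so it becomes $2f\,g_2^{\gamma\sigma}\big(\,^{M_2}\!\mathrm{Rm}_{\alpha\gamma\beta\sigma}+|\mathrm{grad}\,\lambda|^2((g_2)_{\alpha\sigma}(g_2)_{\beta\gamma}-(g_2)_{\alpha\beta}(g_2)_{\gamma\sigma})\big)$. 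Tracing the two pieces separately, the $g_2$-quadratic contracts to $(1-m_2)(g_2)_{\alpha\beta}$, and, using the curvature and Ricci conventions fixed in Section 2, $g_2^{\gamma\sigma}\,^{M_2}\!\mathrm{Rm}_{\alpha\gamma\beta\sigma}=\,^{M_2}\!\mathrm{Ric}_{\alpha\beta}=c\,(g_2)_{\alpha\beta}$; thus the curvature term equals $\frac{2f}{\lambda^2}\big(c+(1-m_2)|\mathrm{grad}\,\lambda|^2\big)\bar g_{\alpha\beta}$, and altogether the left-hand side is $2f\big(\frac{1}{\lambda^2}(c+(1-m_2)|\mathrm{grad}\,\lambda|^2)-f\big)\bar g_{\alpha\beta}$.

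For the right-hand side I would use the horizontal Ricci formula (\ref{10-6-2}), $\overline{\mathrm{Ric}}_{ij}-\,^{M_1}\!\mathrm{Ric}_{ij}=-\frac{m_2}{\lambda}\mathrm{Hess}(\lambda)(\partial_i,\partial_j)$, which, traced with $\bar g^{ij}=g_1^{ij}$, gives $\bar g^{kp}\big(\overline{\mathrm{Ric}}_{kp}-\,^{M_1}\!\mathrm{Ric}_{kp}\big)=-\frac{m_2}{\lambda}\Delta_{M_1}\lambda$; so the claimed right-hand side equals $\frac{2}{\lambda}\Delta_{M_1}\lambda\cdot\overline{\mathrm{Ric}}_{\alpha\beta}=\frac{2f}{\lambda}\Delta_{M_1}\lambda\cdot\bar g_{\alpha\beta}$. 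Matching the two sides then amounts to the scalar identity $\frac{1}{\lambda^2}(c+(1-m_2)|\mathrm{grad}\,\lambda|^2)-f=\frac{1}{\lambda}\Delta_{M_1}\lambda$, which I would verify by plugging in the explicit value of $f$ from (\ref{10-5-001}) together with $\Delta_{M_1}\lambda^2=2\lambda\Delta_{M_1}\lambda+2|\mathrm{grad}\,\lambda|^2$; since this is an identity of scalars, nothing is divided by $f$ and the degenerate case $f=0$ needs no separate treatment. I expect the only genuinely delicate step to be the contraction $g_2^{\gamma\sigma}\,^{M_2}\!\mathrm{Rm}_{\alpha\gamma\beta\sigma}=\,^{M_2}\!\mathrm{Ric}_{\alpha\beta}$: getting both the sign and the numerical factor right requires carefully tracking the index order inherited from (\ref{6-30-5}) and the definition (\ref{10-4}) of $\overline{\mathrm{Ric}}$; everything else is mechanical substitution.
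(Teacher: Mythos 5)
Your proposal is correct, and the reduction is the same as the paper's: both arguments leave the Laplacian term and the term $\frac{2}{m_2}\bar g^{kl}\bar g^{pq}(\overline{\mathrm{Ric}}_{kp}-\,^{M_1}\!\mathrm{Ric}_{kp})\overline{\mathrm{Ric}}_{lq}\bar g_{\alpha\beta}$ untouched and show that the remaining two terms of (\ref{10-5-09}) collapse to $-\frac{2}{m_2}\bar g^{kp}(\overline{\mathrm{Ric}}_{kp}-\,^{M_1}\!\mathrm{Ric}_{kp})\overline{\mathrm{Ric}}_{\alpha\beta}$, invoking Lemma \ref{10-5} to write $\overline{\mathrm{Ric}}_{\alpha\beta}=f\bar g_{\alpha\beta}$. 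Where you diverge is in how that collapse is verified. The paper stays at the level of trace identities: it observes that $\overline{\mathrm{Ric}}_{\alpha\beta}$ is by definition the full trace of $\overline{\mathrm{Rm}}$ over the first and third slots, splits that trace into its horizontal and vertical pieces (\ref{10-6-11}), and uses (\ref{10-6-6}) to solve for $\bar g^{\gamma\sigma}\overline{\mathrm{Rm}}_{(0\gamma)(0\alpha)(0\sigma)(0\beta)}$ in (\ref{10-6-12}); substituting this makes the $\overline{\mathrm{Ric}}_{\alpha\beta}$ terms cancel identically, and neither the explicit value of $f$ from (\ref{10-5-001}) nor the Einstein constant $c$ ever enters. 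Your route instead evaluates both sides explicitly: you contract $\,^{M_2}\!\mathrm{Rm}$ to $c\,g_2$ (using the Einstein condition a second time), trace the Hessian term to $\Delta_{M_1}\lambda$ via (\ref{10-6-2}), and then check a scalar identity by plugging in the explicit formula for $f$ together with $\Delta_{M_1}\lambda^2=2\lambda\Delta_{M_1}\lambda+2|\mathrm{grad}\,\lambda|^2$; I have checked that this identity does hold, and the sign and factor in $g_2^{\gamma\sigma}\,^{M_2}\!\mathrm{Rm}_{\alpha\gamma\beta\sigma}=\,^{M_2}\!\mathrm{Ric}_{\alpha\beta}$ that you rightly flag as the delicate point come out correctly under the conventions of (\ref{6-30-5}) and (\ref{10-4}). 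The paper's argument is more structural and would survive any situation in which $\overline{\mathrm{Ric}}_{\alpha\beta}$ is merely proportional to $\bar g_{\alpha\beta}$; yours is more computational but has the virtue of serving as an independent consistency check of the curvature formulas (\ref{6-30-5})--(\ref{6-30-6}) and of Lemma \ref{10-5} itself.
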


\begin{proof} By (\ref{10-5-1}) and (\ref{10-6-7}), the last two terms on the left-hand
side of (\ref{10-5-09}) can be transformed to
    \begin{equation} \label{10-6-10}
    \begin{split}
   (I)& :=-2\bar g^{ \gamma \delta}\overline{\mathrm{Ric}}_{\gamma \alpha}
         \overline{\mathrm{Ric}}_{\delta \beta }\\
     &\quad +2\lambda^2\bar g^{\gamma \delta} \bar g^{\sigma \tau}
        \overline{\mathrm{Ric}}_{\delta \tau}\left(\,^{M_2}\!\mathrm{Rm}_{\alpha\gamma\beta\sigma}
    +|\mathrm{grad} \lambda|^2 \big(( (g_2)_{\alpha\sigma}(g_2)_{\beta\gamma}
      -(g_2)_{\alpha\beta}(g_2)_{\gamma\sigma}\big)  \right)\\
    & =-2\bar g^{ \gamma \delta}f\bar g_{\gamma \alpha}\overline{\mathrm{Ric}}_{\delta \beta}
    +2\bar g^{\gamma \delta} \bar g^{\sigma \tau}f\bar g_{\delta\tau}
     \overline{\mathrm{Rm}}_{(0 \gamma)(0 \alpha)(0 \sigma)(0 \beta)}\\
    &=2f\left(-\overline{\mathrm{Ric}}_{\alpha \beta}
    +\bar g^{\gamma\sigma }\overline{\mathrm{Rm}}_{(0 \gamma)(0 \alpha)(0 \sigma)(0 \beta)}\right).
      \end{split}
  \end{equation}
By the definition of the Ricci curvature and the only non-zero terms
$\overline{\mathrm{Rm}}_{(k 0)(0 \alpha)( p 0)(0 \beta)}$ and \\
$\overline{\mathrm{Rm}}_{(0 \gamma)(0 \alpha)(0 \sigma)(0 \beta)}$,
we have
    \begin{equation} \label{10-6-11}
    \overline{\mathrm{Ric}}_{\alpha\beta}:= \bar g^{\bar k\bar p \bar\gamma \bar\sigma}
          \overline{\mathrm{Rm}}_{(\bar k \bar \gamma)(0 \alpha)(\bar p \bar\sigma)(0 \beta)}
       =\bar g^{kp}\overline{\mathrm{Rm}}_{(k 0)(0 \alpha)( p 0)(0 \beta)}
       +\bar g^{\gamma \sigma} \overline{\mathrm{Rm}}_{(0 \gamma)(0 \alpha)(0 \sigma)(0
       \beta)}.
    \end{equation}
Combining (\ref{10-6-11}) and (\ref{10-6-6}), we get
     \begin{equation} \label{10-6-12}
    \bar g^{\gamma \sigma }
       \overline{\mathrm{Rm}}_{(0 \gamma)(0 \alpha)(0 \sigma)(0 \beta)}
    =\overline{\mathrm{Ric}}_{\alpha\beta} -\frac{1}{m_2}\bar g^{kp}\bar g_{\alpha\beta}
    \big(\overline{\mathrm{Ric}}_{kp}-\,^{M_1}\!\mathrm{Ric}_{kp}\big).
\end{equation}
Substituting (\ref{10-6-12}) into (\ref{10-6-10}) and using the
relation (\ref{10-5-1}), we obtain
     \begin{equation} \label{10-6-13}
         \begin{split}
       (I)&=-\frac{2}{m_2}(f \bar g_{\alpha\beta})\bar g^{kp}
    \big(\overline{\mathrm{Ric}}_{kp}-\,^{M_1}\!\mathrm{Ric}_{kp}\big)\\
    & =-\frac{2}{m_2} \bar g^{kp} \overline{\mathrm{Ric}}_{\alpha\beta}
    \big(\overline{\mathrm{Ric}}_{kp}-\,^{M_1}\!\mathrm{Ric}_{kp}\big).\\
         \end{split}
      \end{equation}
Finally, substituting (\ref{10-6-13}) into (\ref{10-5-09}) gives
       \begin{equation} \notag
         \begin{split}
    \frac{\partial }{\partial t} \overline{\mathrm{Ric}}_{\alpha\beta}
     &= \bar\Delta \overline{\mathrm{Ric}}_{\alpha\beta}
      +\frac{2}{m_2}\bar g^{k l} \bar g^{p q } \overline{\mathrm{Ric}}_{l q }
      \big(\overline{\mathrm{Ric}}_{kp}-\,^{M_1}\!\mathrm{Ric}_{kp}\big)\bar g_{\alpha\beta}\\
      & \quad -\frac{2}{m_2} \bar g^{kp} \overline{\mathrm{Ric}}_{\alpha\beta}
    \big(\overline{\mathrm{Ric}}_{kp}-\,^{M_1}\!\mathrm{Ric}_{kp}\big),
      \end{split}
      \end{equation}
which is (\ref{10-5-09'}).
\end{proof}

 Now we return to find out the interesting evolution equation
of $f(x,t)$.

\begin{theorem} \label{10-6-13'} Assume that $g_2$ is a fixed
Einstein metric with $\,^{M_2}\!\mathrm{Ric}=c g_2$. Then under the
Ricci flow (\ref{9-28}), $f(x,t)$ satisfies the evolution equation
  \begin{equation} \label{10-6-14}
   \begin{split}
  \frac{\partial}{\partial t} f &=\bar\Delta\ f+2f^2-\frac{2}{m_2} \bar g^{kp}
    \big(\overline{\mathrm{Ric}}_{kp}-\,^{M_1}\!\mathrm{Ric}_{kp}\big) f  \\
   &\hskip 1cm  +\frac{2}{m_2} \bar g^{k l} \bar g^{p q }
        \big(\overline{\mathrm{Ric}}_{kp}-\,^{M_1}\!\mathrm{Ric}_{kp}\big)
           \overline{\mathrm{Ric}}_{l q }.
      \end{split}
  \end{equation}
\end{theorem}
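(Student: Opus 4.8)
The plan is to differentiate the pointwise identity $\overline{\mathrm{Ric}}_{\alpha\beta}=f(x,t)\,\bar g_{\alpha\beta}$ of Lemma \ref{10-5} along the flow and to compare the outcome with the evolution equation (\ref{10-5-09'}) already established in Theorem \ref{10-5-009}. Since $\bar g$ solves the Ricci flow (\ref{9-28}), the fiber block of the metric evolves by $\frac{\partial}{\partial t}\bar g_{\alpha\beta}=-2\overline{\mathrm{Ric}}_{\alpha\beta}=-2f\,\bar g_{\alpha\beta}$, so the product rule gives
\begin{equation*}
\frac{\partial}{\partial t}\overline{\mathrm{Ric}}_{\alpha\beta}
 =\Big(\frac{\partial}{\partial t}f\Big)\bar g_{\alpha\beta}+f\,\frac{\partial}{\partial t}\bar g_{\alpha\beta}
 =\Big(\frac{\partial}{\partial t}f-2f^{2}\Big)\bar g_{\alpha\beta}.
\end{equation*}
This exhibits the left-hand side of (\ref{10-5-09'}) as a scalar multiple of $\bar g_{\alpha\beta}$.

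Next I would recast the right-hand side of (\ref{10-5-09'}) in the same form. The first term is $\bar\Delta\overline{\mathrm{Ric}}_{\alpha\beta}=\bar\Delta(f\,\bar g_{\alpha\beta})$; because $\bar\nabla\bar g=0$ the metric tensor is parallel, so the rough Laplacian $\bar\Delta$ on $\overline M$ passes through it and $\bar\Delta(f\,\bar g_{\alpha\beta})=(\bar\Delta f)\,\bar g_{\alpha\beta}$. The middle term already carries the explicit factor $\overline{\mathrm{Ric}}_{\alpha\beta}=f\,\bar g_{\alpha\beta}$; the last term already carries the explicit factor $\bar g_{\alpha\beta}$, and its remaining indices $k,l,p,q$ are base ($M_1$) indices, so $\overline{\mathrm{Ric}}_{lq}$ there is untouched by the substitution. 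Collecting these, (\ref{10-5-09'}) becomes
\begin{equation*}
\Big(\frac{\partial}{\partial t}f-2f^{2}\Big)\bar g_{\alpha\beta}
 =\Big(\bar\Delta f-\frac{2}{m_2}\bar g^{kp}\big(\overline{\mathrm{Ric}}_{kp}-\,^{M_1}\!\mathrm{Ric}_{kp}\big)f
 +\frac{2}{m_2}\bar g^{kl}\bar g^{pq}\big(\overline{\mathrm{Ric}}_{kp}-\,^{M_1}\!\mathrm{Ric}_{kp}\big)\overline{\mathrm{Ric}}_{lq}\Big)\bar g_{\alpha\beta}.
\end{equation*}

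Finally, contracting with $\frac{1}{m_2}\bar g^{\alpha\beta}$ (equivalently, cancelling the nondegenerate tensor $\bar g_{\alpha\beta}$) and moving $-2f^{2}$ to the right-hand side yields exactly (\ref{10-6-14}). The one place that deserves a word of justification is the commutation $\bar\Delta(f\,\bar g_{\alpha\beta})=(\bar\Delta f)\,\bar g_{\alpha\beta}$ together with the index bookkeeping noted above; everything else is a direct substitution, so I do not expect a serious obstacle. An essentially equivalent route, if one prefers not to write the quantities as multiples of $\bar g_{\alpha\beta}$, is to start from $f=\frac{1}{m_2}\bar g^{\alpha\beta}\overline{\mathrm{Ric}}_{\alpha\beta}$ (again Lemma \ref{10-5}), differentiate in $t$ using $\frac{\partial}{\partial t}\bar g^{\alpha\beta}=2f\,\bar g^{\alpha\beta}$ and (\ref{10-5-09'}), and simplify with $\bar g^{\alpha\beta}\overline{\mathrm{Ric}}_{\alpha\beta}=m_2 f$.
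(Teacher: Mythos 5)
Your argument is correct and is essentially the paper's proof: the paper takes exactly your "alternative route," differentiating $f=\frac{1}{m_2}\bar g^{\alpha\beta}\overline{\mathrm{Ric}}_{\alpha\beta}$ in $t$ using $\frac{\partial}{\partial t}\bar g^{\alpha\beta}=2\bar g^{\alpha\tau}\bar g^{\beta\sigma}\overline{\mathrm{Ric}}_{\tau\sigma}$ and substituting (\ref{10-5-09'}), which is just the trace of your coefficient comparison against $\frac{1}{m_2}\bar g^{\alpha\beta}$. Your commutation $\bar\Delta(f\,\bar g_{\alpha\beta})=(\bar\Delta f)\,\bar g_{\alpha\beta}$ is the same use of $\bar\nabla\bar g=0$ that the paper makes when it writes $\bar g^{\alpha\beta}\bar\Delta\overline{\mathrm{Ric}}_{\alpha\beta}=\frac{1}{m_2}\bar\Delta(m_2 f)\cdot m_2/m_2$, so no gap remains.
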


\begin{proof}
  Since $\bar g^{i\alpha}=0$ and
  $\overline{\mathrm{Ric}}_{\alpha i}=0$, by (\ref{10-5-001}) we have
      \begin{equation} \label{10-6-15}
  \begin{split}
  \frac{\partial}{\partial t} f
   &=\frac{\partial}{\partial t}\big( \frac{1}{m_2}\bar g^{\alpha\beta}
     \overline{\mathrm{Ric}}_{\alpha\beta} \big)\\
  &=\frac{1}{m_2}\left(\frac{\partial}{\partial t}\bar g^{\alpha\beta}\right)
    \overline{\mathrm{Ric}}_{\alpha\beta}+\frac{1}{m_2}\bar g^{\alpha\beta}
      \left(\frac{\partial}{\partial t}\overline{\mathrm{Ric}}_{\alpha\beta}\right).
      \end{split}
  \end{equation}
Note that
    \begin{equation} \notag
  \begin{split}
  0=\frac{\partial}{\partial t}(\delta_{\alpha\gamma})& =\frac{\partial}{\partial
      t}\left( \bar g^{\alpha\beta} \bar g_{\beta\gamma} \right)\\
      &=\frac{\partial}{\partial t}\left( \bar g^{\alpha\beta} \right)\bar g_{\beta\gamma}
       +\bar g^{\alpha\beta}\frac{\partial}{\partial t}\left(  \bar g_{\beta\gamma} \right).
      \end{split}
  \end{equation}
Combining this and (\ref{9-28}), yields
    \[ \frac{\partial}{\partial t} \bar g^{\alpha\beta}
      =2\bar g^{\alpha\tau} \bar g^{\beta \sigma}\overline{\mathrm{Ric}}_{\tau\sigma}. \]
 Thus substituting this and (\ref{10-5-09'}) into (\ref{10-6-15}) gives
       \begin{equation} \label{10-6-16}
  \begin{split}
  \frac{\partial}{\partial t} f
 & =\frac{2}{m_2}\bar g^{\alpha\tau} \bar g^{\beta \sigma}\overline{\mathrm{Ric}}_{\tau\sigma}
      \overline{\mathrm{Ric}}_{\alpha\beta}\\
     & \quad  +\frac{1}{m_2} \bar g^{\alpha\beta}
     \bigg(   \bar\Delta \overline{\mathrm{Ric}}_{\alpha\beta} -\frac{2}{m_2} \bar g^{kp}
    \big(\overline{\mathrm{Ric}}_{kp}-\,^{M_1}\!\mathrm{Ric}_{kp}\big)
           \overline{\mathrm{Ric}}_{\alpha\beta} \\
     &\hskip 2cm +\frac{2}{m_2}\bar g^{k l} \bar g^{p q }
        \big(\overline{\mathrm{Ric}}_{kp}-\,^{M_1}\!\mathrm{Ric}_{kp}\big)
           \overline{\mathrm{Ric}}_{l q }\bar g_{\alpha\beta}\bigg)\\
      &=\frac{2}{m_2}\bar g^{\alpha\tau} \bar g^{\beta \sigma}(f \bar
      g_{\tau\sigma})(f \bar g_{\alpha\beta})\\
      & \quad  +\frac{1}{m_2}\bar\Delta
       \left(\bar g^{\alpha\beta}(f \bar g_{\alpha\beta})\right)
      -\frac{2}{m_2^2} \bar g^{kp}
    \big(\overline{\mathrm{Ric}}_{kp}-\,^{M_1}\!\mathrm{Ric}_{kp}\big)
          \bar g^{\alpha\beta} (f \bar g_{\alpha\beta}) \\
    &\hskip 1cm +\frac{2}{m_2^2} \bar g^{k l} \bar g^{p q }
        \big(\overline{\mathrm{Ric}}_{kp}-\,^{M_1}\!\mathrm{Ric}_{kp}\big)
           \overline{\mathrm{Ric}}_{l q }(\bar g^{\alpha\beta}\bar  g_{\alpha\beta}).\\
   &=\frac{2}{m_2}f^2 m_2+\frac{1}{m_2}\bar\Delta( m_2f)
      -\frac{2}{m_2^2} \bar g^{kp}
    \big(\overline{\mathrm{Ric}}_{kp}-\,^{M_1}\!\mathrm{Ric}_{kp}\big)(fm_2)\\
  &\hskip 1cm  +\frac{2}{m_2^2} \bar g^{k l} \bar g^{p q }
        \big(\overline{\mathrm{Ric}}_{kp}-\,^{M_1}\!\mathrm{Ric}_{kp}\big)
           \overline{\mathrm{Ric}}_{l q }\cdot m_2,
      \end{split}
  \end{equation}
that is
   \begin{equation} \notag
  \begin{split}
  \frac{\partial}{\partial t} f &=2f^2+ \bar\Delta\ f-\frac{2}{m_2} \bar g^{kp}
    \big(\overline{\mathrm{Ric}}_{kp}-\,^{M_1}\!\mathrm{Ric}_{kp}\big) f  \\
   &\hskip 1cm  +\frac{2}{m_2} \bar g^{k l} \bar g^{p q }
        \big(\overline{\mathrm{Ric}}_{kp}-\,^{M_1}\!\mathrm{Ric}_{kp}\big)
           \overline{\mathrm{Ric}}_{l q }.
           \end{split}
   \end{equation}
This is the desired equality (\ref{10-6-14}).
\end{proof}

\begin{remark} In this theorem, if we take $M_1=\mathbb R$, then
$\,^{M_1}\!\mathrm{Ric}=0$. This time, since $m_1=1$, by changing
the indices $i, j, k,l, p, q$ into a same notation ``$x$", then
(\ref{10-5-08}), (\ref{10-5-09'}) and (\ref{10-6-14}) are
respectively rewritten as
  \begin{equation} \notag
  \begin{split}
  &  \frac{\partial }{\partial t} \overline{\mathrm{Ric}}_{xx}
     = \bar\Delta \overline{\mathrm{Ric}}_{xx}
       +\frac{2}{m_2}\bar g^{\alpha\beta}\overline{\mathrm{Ric}}_{\alpha\beta}
       \overline{\mathrm{Ric}}_{xx} -2\bar g^{xx}(\overline{\mathrm{Ric}}_{xx})^2,\\
    &  \frac{\partial }{\partial t} \overline{\mathrm{Ric}}_{\alpha\beta}
     = \bar\Delta \overline{\mathrm{Ric}}_{\alpha\beta} -\frac{2}{m_2} \bar g^{xx}
    \overline{\mathrm{Ric}}_{xx} \overline{\mathrm{Ric}}_{\alpha\beta}
     +\frac{2}{m_2}\bar g^{xx} \bar g^{xx } (\overline{\mathrm{Ric}}_{xx})^2 \bar g_{\alpha\beta}, \\
    &\frac{\partial}{\partial t} f =2f^2+ \bar\Delta\ f-\frac{2}{m_2} \bar g^{xx}
    \overline{\mathrm{Ric}}_{xx} f  +\frac{2}{m_2} \bar g^{xx} \bar g^{xx }
       (\overline{\mathrm{Ric}}_{xx})^2,
           \end{split}
   \end{equation}
which are exactly (4.3)-(4.4) in Proposition 4.1 in \cite{Si}.
\end{remark}

\begin{remark} Under HGF, the evolution equations
for Ricci curvature on a single manifold are much more complicated
when compared with the case under RF, because they involve some
complex terms such as  $B
\big(X,B(X,Y)\big):=\frac{\partial}{\partial t}\big(\nabla_X
(\frac{\partial}{\partial t}\nabla_Y Z)\big)$ and the unknown term $
\frac{\partial g(t)}{\partial t}$( see Theorem 1.4 or Theorem 1.1 in
\cite{Lu}, or Theorem 5.2 in \cite{DKL} ), let alone on the warped
product manifold. Therefore, it is very hard to gain some novel
evolution equations for Ricci curvature on warped product manifold
$\bar M$ when we still want to follow the introduced approach under
the RF. Taking into account the just mentioned reason and our
present technique, in this paper we put this issue aside for a
moment.
\end{remark}


\begin{thebibliography}{99}
\bibitem[AK]{AK} S. Angenent and D. Knopf, {\em An example of nechpinching for Ricci
flow ow on $S^{n+1}$}, Math. Res. Lett., (11)4(2004): 493-518.

\bibitem[BEP]{BEP} J.K. Beem, P. Ehrlich and T.G. Powell, {\em Warped product manifolds in relativity},
     in: Selected Studies: A Volume Dedicated to the Memory of Albert Einstein, North-Holland,
     Armsterdarm, 1982: 41-56.

\bibitem[BEP]{BEP} J. K. Beem, P. E. Ehrlich and Th. G. Powell, {\em Warped Product Manifolds
in Relativity}, Selected Studies: Physics-Astrophysics, Mathematics,
History of Science, North-Holland, New York, 1982.

\bibitem[BG]{BG} M. Bertola and D. Gouthier, {\em Lie triple systems and warped products},
Rend. Mat. Appl., (7) 21 (2001): 275-293.

\bibitem[BMO]{BMO}  A. Balmus, S. Montaldo and C. Oniciuc, {\em Biharmonic maps between warped
product manifolds}, J. Geom. Phys., 57(2007): 449-466.

\bibitem[BO'N]{BO'N} R.L. Bishop and B. O'Neill, {Manifolds of negative curvature},
 Trans. Amer. Math. Soc., 145 (1969): 1-49.

\bibitem[BS]{BS} S. Brendle and R. Schoen, {\em Manifolds with 1/4-pinched curvature are space forms},
J. Amer. Math. Sco., 200 (2009): 1-13.

\bibitem[Be]{Be} A.L. Besse, {\em Einstein Manifolds}, Ergebnisse der Mathematics und threr
 Grenzgeiete (3)  [Results in Mathematics and Related Areas (3)], 10, Springer-Verlag, Berlin, 1987.

\bibitem[Br]{Br} S. Brendle, {\em Ricci Flow and the Sphere Theorem}, Grad. Studies in Math.,
111, AMS., 2010.

\bibitem[CK]{CK} B. Chow and D. Knopf, {\em The Ricci Flow: an Introduction},
Mathematical Surveys and Monographs Vol. 110, AMS, Providence, 2004.

\bibitem[CLN]{CLN} B. Chow, P. Lu and L. Ni, {\emph Harmilton's Ricci Flow},
American Mathematical Society and Science Press, 2006.


\bibitem[DKL]{DKL} Wen-Rong Dai, De-Xing Kong and Kefeng Liu,
{ \em Hyperbolic geometric flow (I): short-time existence and
nonlinear stability}, Pure and Applied Mathematics Quarterly., (6)2
(2010):331-359 (Special Issue: In honor of Michael Atiyah and
Isadore Singer).

\bibitem[DPK]{DPK}  S. Das, K. Prabhu and S. Kar, {\em Ricci flow of unwarped and warped product
  manifolds}, arXiv:0908.1295v2 [gr-qc], 2009.

\bibitem[De]{De} D. De Turck, {\em Deforming metrics in the direction of their
Ricci tensors}, J. Differential Geom., 18(1983):157-162.

\bibitem[HU]{HU} I.E. Hiric\v{a} and C. Udriste, { \em Basic
evolution PDEs in Riemannian geometry}, Balkan Journal of Geometry
and Its Applications, (17)1 (2012): 30-40.

\bibitem[Ha]{Ha} R. Hamilton, {\em Three-manifolds with
 positive Ricci curvature}, J. Differential Geom., 17 (1982):255-306.

\bibitem[KK]{KK} D.-S. Kim and Y.H. Kim, {\em Compact Einstein warped product spaces with
 nonpositive scalar curvature}, Proc. Amer. Math. Soc., 131 (2003): 2573-2576.

\bibitem[KL]{KL} De-Xing Kong and Kefeng Liu, {\em Wave
character of metrics and hyperbolic geometric flow}, J. Math. Phys.,
48 (2007):103508-1--103508-14.

\bibitem[Lu]{Lu} Wei-Jun Lu, {\em Evolution equations of curvature tensors along the hyperbolic geometric
  flow}, arXiv:1204.1396v1 [math.DG], 2012.

\bibitem[MX]{MX} Li Ma and Xing-Wang Xu, {\em Ricci flow with hyperbolic warped product
  metrics}, Math. Nachr., (284) 5-6 (2011):739-746.

\bibitem[O'N]{O'N} B. O'Neill, {\em  Semi-Riemannian Geometry with Applications to Relativity},
Academic Press, New York, 1983.

\bibitem[Pe]{Pe} P. Petersen, Riemannian Geometry. Second edition, Graduate Texts in Mathematics,
171, Springer, New York, 2006.

\bibitem[Sh]{Sh} W.X. Shi, {\em Deforming the metric on complete Riemannian
manifold}, J. Diff. Geom., 30 (1989): 223-301.

\bibitem[Si]{Si} M. Simon, A class of riemannian manifolds that pinch when evolved
by Ricci flow, Manuscr.Math., 101(2000): 89-114.

\bibitem[YN]{YN} Shing-Tung Yau and Steve Nadis, {\em The Shape of Inner Space:
 String Theory and the Geometry of the Universe's Hidden Dimensions},
 Basic Books, A Menber of the Perseus Books Group, 2010.

\end{thebibliography}
\end{document}